\documentclass[10pt, reqno]{amsart}
\usepackage{amsmath}

\usepackage[backref=page]{hyperref}

\usepackage{amsthm}
\usepackage{amssymb}

\usepackage{physics}

\newtheorem{thm}{Theorem}[section]
\newtheorem{cor}[thm]{Corollary}
\newtheorem{lem}[thm]{Lemma}
\newtheorem{prop}[thm]{Proposition}
\theoremstyle{definition}
\newtheorem{defn}[thm]{Definition}

\newtheorem{rem}[thm]{Remark}
\numberwithin{equation}{section}

\newcommand{\Holder}[0]{H\"older}
\newcommand{\Schr}[0]{Schr\"odinger}

\begin{document}
%

\author[Xiaoyan Su]{Xiaoyan Su}
\address{Institute of Applied Physics and Computational Mathematics, 100094, P.R.China}
\email{suxiaoyan0427@qq.com}

\author[Jiqiang Zheng]{Jiqiang Zheng}
\address{Institute of Applied Physics and Computational Mathematics, 100094, P.R.China}
\email{zhengjiqiang@gmail.com}

\title{H\"older regularity for the time fractional Schr\"odinger equation}

\begin{abstract}
In this paper, we investigate the H\"older regularity of solutions to the time fractional Schr\"odinger equation of order $1<\alpha<2$, which interpolates between the Schr\"odinger and wave equations. This is inspired by Hirata and Miao's work \cite{Hirata} which studied the fractional diffusion-wave equation.  First, we give the asymptotic behavior for the oscillatory distributional kernels and their Bessel potentials by using Fourier analytic techniques. Then, the space regularity is derived by employing some results on singular Fourier multipliers. Using the asymptotic behavior for the above kernels, we prove the time regularity. Finally, we use mismatch estimates to prove the pointwise convergence to the initial data in H\"older spaces. In addition, we also prove H\"older regularity result for the Schr\"odinger equation.

\end{abstract}
\maketitle
\begin{center}
 \begin{minipage}{125mm}
   { \small {{\bf Key Words:} Time fractional Schr\"odinger equation; H\"older regularity; singular Fourier multipliers; pointwise convergence in H\"older space.}
      {}
   }\\
    { \small {\bf AMS Classification:}
      {35R11, 42B15, 35A09}
      }
 \end{minipage}
\end{center}


\section{Introduction}

In this paper, we investigate the \Holder{} regularity of solutions to the following Cauchy problem,
\begin{equation}\label{fde}
\left \{\begin{split}
  &i^\alpha D_t^\alpha u(t,x) = -\Delta u(t,x)+f(t,x), &(t,x)&\in {\mathbb R^+}\times \mathbb R^n,\\
 & u(0,x)=u_0(x), \   \partial_t u(0, x)=u_1(x), & x&\in \mathbb R^n, \end{split}
\right.
 \end{equation}
where $D_t^\alpha$ is the Caputo derivative of order $1<\alpha<2$, which interpolates between the \Schr{} equation ($\alpha=1$) and wave equation ($\alpha=2$).

Time fractional differential equations have recently become a topic of active research.
Many of these papers study the time fractional generalization of the heat equation or the wave equation, with form
\begin{align}\label{dwe}
D_t^\alpha u(t,x) = \Delta u(t,x)+f(t,x), \quad (t,x)\in {\mathbb R^+}\times \mathbb R^n,
\end{align}
which we will refer as `diffusion-wave' equations.
For example, the fundamental solutions of the diffusion-wave equations have been given in \cite{ Mainardi, Schneider} and the asymptotic expansions of the fundamental solutions have been fully developed in \cite{Allen, Kim2}. The $L^p_t L^q_x$ theory for time fractional evolution equations with variable coefficients has been established in \cite{Kim}.
 Some probabilistic methods have been used in \cite{Fujita, Fujita2} to study these diffusion-wave equations. Also, see \cite{Hirata, Miao2005, Miao} for more topics.

The time fractional \Schr{} equation has been studied in \cite{ Dong,Grande, Laskin 2, Naber, Su, Su2}, but our equation is different because they restrict the order $\alpha$ of the time fractional derivative to the range $(0,1)$. In contrast, we consider here the time fractional derivative $D_t^\alpha$ of order $\alpha \in (1,2)$, which has not been considered in the previously mentioned papers. In addition, in our equation \eqref{fde}, the term $D_t^\alpha u$ is multiplied by $i^\alpha$,  which makes it different from the diffusion-wave equation \eqref{dwe}. Also, when $\alpha = 1$, the equation \eqref{fde} is the classical \Schr{} equation, and when $\alpha = 2$, the equation \eqref{fde} is the wave equation.

Using the Fourier-Laplace transform, it is easy to solve the Cauchy problem  \eqref{fde} in the sense of mild solutions (see Definition \ref{mildsolution} below). In this paper, we are interested in the minimal regularity required to understand \eqref{fde} in a pointwise sense. Intuitively, all the terms in the equation make sense pointwise as long as $u$ is $C_t^\alpha$ in time (see Definition \ref{time holder space} below) and $C^2$ in space. So the question becomes: what regularity should we impose on the initial data $(u_0,u_1)$ so that the mild solution is a classical solution in the sense of Definition \ref{classical-solution}?

   To answer this question, we can first look at the endpoint cases $\alpha \in \{1,2\}$. For the wave equation ($\alpha=2$), it is well known that in order to make sure that the solution is classical ($C^2$ in both space and time), the initial data should belong to $C^{[n/2] + 2}(\mathbb R^n)$. So heuristically, the solution loses $[n/2]$ `derivatives'. This result can be obtained either by giving an explicit formula for the solution (see Evan's \cite{Evans} or Miao's \cite{Miao}) or by investigating the order of the fundamental solution as a distribution, as in H\"ormander's book \cite{Homander}.

For the \Schr{} equation ($\alpha=1$), to the best of the authors' knowledge, classical solutions of this H\"older regularity has not been studied, since it is more natural from physical considerations to study this equation with $L^2$ based Sobolev spaces. However, we can still ask similar question for the Schr\"odinger equation: what is the minimal regularity can guarantee the existence of the classical solution?  To answer this question, we also prove a new result for the classical \Schr{} equation (see Theorem \ref{main-result-2} below).

 The existence of classical solutions for the diffusion-wave equation \eqref{dwe} is proven by studying the asymptotic behavior of the fundamental solution using Fox $H$-functions, see \cite{Kemppainen, Kochubei, Kochubei2, Pskhu}. However, we cannot adapt this method to study \eqref{fde} because in our case, the variable $z = i^{-\alpha} x t^{-\alpha/2}$ lies in a critical line $|\arg z |= \frac{\alpha \pi }2$ of the corresponding $H$-functions and the asymptotic behavior on this line cannot be given simply by analyzing the residues of $H$-functions.

 Instead, we will use Fourier analysis techniques to study \eqref{fde} for $\alpha \in (1,2)$. In contrast to the fundamental solutions for the diffusion-wave equations which have polynomial decay as $|z|\to \infty$, we prove (see Lemma \ref{asymptotic-behavior-of-kernel} below) that the distributional kernels for \eqref{fde} are oscillating without decay as $|z|\to \infty$. This motivates the study of Fourier multipliers related to \eqref{fde} in \Holder{} spaces, which allow us to prove the space regularity of solutions. To obtain the time regularity, we prove some upper bounds for the Bessel potentials of the corresponding kernels to verify the exchange of limits and integrals.

 Finally, we prove that the initial data is attained in a pointwise sense. As it is well known, this is easy to prove for the wave equation since the fundamental solution is compactly supported, a property also known as the finite speed of propagation. For the \Schr{} equation, finite speed of propagation is not available which complicates the proof. As a replacement, we use `almost finite speed of propagation', also known as `mismatch estimates' in the literature (see \cite{Killip}). For \eqref{fde} with $\alpha\in(1,2)$, it turns out that analogous mismatch estimates are also true, which allow us to prove that if the initial data belongs to a certain \Holder{} space, then solutions to \eqref{fde} with $\alpha\in(1,2)$ pointwise attain the initial data.

 Our main results are as follows:
 \begin{thm}\label{main-result-1} For $1<\alpha<2$ and any $\epsilon>0$, if $u_0 \in C^{2+\frac{n}{\alpha}+\epsilon}(\mathbb R^n)$, $u_1 \in C^{2+\frac{n}{\alpha}-\frac{2}{\alpha} +\epsilon}(\mathbb R^n)$ and $f\in C_t^{0}((0,\infty); C^{\frac{n}{\alpha}+\frac{2}{\alpha}+\epsilon}(\mathbb R^n))$, then the mild solution $u(t,x)$ defined by \eqref{mildsolution} is a classical solution defined in \eqref{classical-solution}.	
\end{thm}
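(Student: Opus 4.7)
The plan is to represent the mild solution as a sum
\begin{equation*}
u(t,x) = \bigl(K_0(t,\cdot) * u_0\bigr)(x) + \bigl(K_1(t,\cdot) * u_1\bigr)(x) + \int_0^t \bigl(K_2(t-s,\cdot) * f(s,\cdot)\bigr)(x)\, ds,
\end{equation*}
where $K_0, K_1, K_2$ are the oscillatory distributional kernels obtained from inverting the Fourier--Laplace transform of \eqref{fde} (Mittag-Leffler symbols evaluated at $-i^{-\alpha}|\xi|^2 t^\alpha$). The conclusion then decomposes into three subclaims: (i) for each fixed $t>0$, $u(t,\cdot)$ and $\Delta u(t,\cdot)$ are continuous in $x$; (ii) for each fixed $x$, $t\mapsto u(t,x)$ lies in $C_t^\alpha$ and satisfies $i^\alpha D_t^\alpha u = -\Delta u + f$ pointwise; (iii) $u(t,x)\to u_0(x)$ and $\partial_t u(t,x)\to u_1(x)$ pointwise as $t\to 0^+$.

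For (i), I would feed $u_0, u_1, f$ into the singular Fourier multiplier estimates on \Holder{} spaces announced in the introduction. The asymptotic behavior established in Lemma \ref{asymptotic-behavior-of-kernel} shows that $K_0$ and $K_1$ oscillate without decay at infinity, and their effective order in the \Holder{} scale is a loss of $n/\alpha$ derivatives, interpolating correctly between the wave endpoint loss of $n/2$ and the \Schr{} endpoint loss of $n$. Combined with the two extra derivatives needed for $C^2$, this yields the threshold $2 + n/\alpha + \epsilon$ on $u_0$. The improved threshold on $u_1$ reflects the extra factor of $t^{\alpha-1}$ sitting in $K_1$, which by the natural scaling $t\sim |\xi|^{-2/\alpha}$ of \eqref{fde} smooths by $2/\alpha$ derivatives; the threshold on $f$ comes analogously from $K_2$ together with the Duhamel integration in $s$.

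For (ii), I would differentiate the above representation in $t$ by dominated convergence. The main obstacle is here: since $K_0, K_1, K_2$ oscillate without decay in $x$, naive domination fails. The remedy, supplied by the Bessel potential bounds in Lemma \ref{asymptotic-behavior-of-kernel}, is to write $K_j = (1-\Delta)^{-N}(1-\Delta)^N K_j$ for $N$ large, transfer the derivatives onto the smooth factor $u_0$, $u_1$, or $f$, and dominate using the decay of the Bessel-smoothed kernel. This legitimizes the exchange of $D_t^\alpha$ and $\Delta$ with the convolution; then the equation holds identically on the Fourier side by construction of the symbols, hence pointwise. Continuity in $t$ of $D_t^\alpha u$ and $\Delta u$ follows along the same lines.

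For (iii), I would use the mismatch (almost finite speed of propagation) estimates for $K_0$ and $\partial_t K_1$: split the convolution into a short-range piece that behaves as a bona fide approximate identity and therefore converges to the pointwise value of the data, and a long-range piece that is controlled by the \Holder{} seminorm of the data times the mismatch tail of the kernel, which shrinks as $t\to 0^+$. The analogous argument handles $\partial_t u \to u_1$ and the Duhamel term (the latter vanishing as $t\to 0^+$ because the $s$-integral is over a shrinking interval and the integrand is bounded by the assumed \Holder{} norm of $f$).
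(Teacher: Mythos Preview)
Your three-part outline---space regularity via singular Fourier multipliers on \Holder{} spaces, time regularity by transferring derivatives through Bessel potentials to justify dominated convergence, and pointwise attainment of the data via mismatch estimates---is exactly the paper's strategy in Section~4, subsection by subsection. One small slip: the kernel $K_1$ for $u_1$ carries a prefactor $t$ (it is $\widehat Q_\alpha = tE_{\alpha,2}(i^{-\alpha}t^\alpha|\xi|^2)$), not $t^{\alpha-1}$ (that factor sits in $K_2=P_\alpha$), though under the scaling $t\sim|\xi|^{-2/\alpha}$ a factor of $t$ still yields the $2/\alpha$ gain you quote, so your threshold heuristics are unaffected.
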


For equation \eqref{fde} with $\alpha=1$, which is the Schr\"odinger equation, we only need to prescribe $u_0$, that is
\begin{equation}\label{se}
 \left \{\begin{split}
  &i \partial_t u(t,x) = -\Delta u(t,x)+f(t,x), &(t,x)&\in \mathbb R^+\times \mathbb R^n,\\
 & u(0,x)=u_0(x),  & x&\in \mathbb R^n,
 \end{split}
 \right.
 \end{equation}
and we have the following result:
\begin{thm} \label{main-result-2}
 For $\alpha=1$ and any $\epsilon>0$,  if $u_0(x)\in C^{n+2+\epsilon}(\mathbb R^n)$ and $f\in C_t^{0}((0,\infty);C^{n+\epsilon}(\mathbb R^n))$, then the mild solution defined in \eqref{sms} is a classical solution .
\end{thm}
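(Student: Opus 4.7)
The plan is to specialise the three-part strategy behind Theorem~\ref{main-result-1} to the endpoint $\alpha=1$. Duhamel's formula gives the mild solution
$$u(t,x) = e^{it\Delta}u_0(x) - i\int_0^t \bigl(e^{i(t-s)\Delta}f(s,\cdot)\bigr)(x)\,ds =: I(t,x) + II(t,x),$$
where $e^{it\Delta}$ has the oscillating, undecaying kernel $K_t(x)=(4\pi it)^{-n/2}e^{i|x|^2/(4t)}$. We must verify that $u(t,\cdot)\in C^2(\mathbb{R}^n)$ for each $t>0$, that $t\mapsto u(t,x)$ lies in $C^1(0,\infty)$, that equation \eqref{se} holds pointwise, and that $u(t,x)\to u_0(x)$ pointwise as $t\to 0^+$.

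For the space regularity of $I$, I will perform a Littlewood--Paley decomposition of $K_t$ and apply stationary phase on each dyadic shell $|\xi|\sim 2^j$, which yields $\|K_{t,j}\|_{L^1}\lesssim 1+t^{n/2}2^{jn}$ and hence the Fourier-multiplier bound
$$\bigl\|\partial^\alpha e^{it\Delta}u_0\bigr\|_{L^\infty}\lesssim \bigl(1+t^{n/2}\bigr)\|u_0\|_{C^{n+|\alpha|+\epsilon}}\qquad (|\alpha|\le 2).$$
This exhibits the characteristic loss of $n$ derivatives for the Schr\"odinger multiplier on H\"older/Besov spaces and accounts for the hypothesis $u_0\in C^{n+2+\epsilon}$. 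The same bound applied naively to $II$ would require $f\in C^{n+2+\epsilon}$, but integration in time supplies an additional gain of two derivatives: the symbol of $\int_0^t e^{i\tau\Delta}\,d\tau$ equals $(1-e^{-it|\xi|^2})/(i|\xi|^2)$, and a corresponding analysis of the Bessel-potential kernel $\mathcal{F}^{-1}[(1+|\xi|^2)^{-\sigma/2}e^{-i\tau|\xi|^2}]$ for $\sigma$ slightly above $n$ yields a uniform-in-$\tau$ $L^1$ bound that controls $II(t,\cdot)$ in $C^2$ from just $f\in C^{n+\epsilon}$.

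For the time regularity and the pointwise validity of the equation, I will formally compute $\partial_t u=i\Delta u-if$ and justify the differentiation under the integral sign in $II$. The Bessel-potential kernel bounds above furnish a dominating $L^1$-function in $s$ for $\Delta e^{i(t-s)\Delta}f(s,\cdot)$, allowing the exchange of $\partial_t$ with $\int_0^t$ and the verification of \eqref{se} pointwise. For the pointwise attainment of the initial data I will write $I(t,x)-u_0(x)=\int_0^t i\Delta e^{is\Delta}u_0(x)\,ds$, bound it by $O(t)\|u_0\|_{C^{n+2+\epsilon}}$ using the space estimate, and handle any far-diagonal contribution via the mismatch (``almost finite speed of propagation'') estimates for $K_t$ developed for the $\alpha\in(1,2)$ case; the Duhamel piece satisfies $II(t,x)=O(t)$ and vanishes in the limit. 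The main obstacle throughout will be the absence of decay in $K_t$, which rules out naive $L^1$--$L^\infty$ bounds for $e^{it\Delta}$: the Fourier multiplier theory on H\"older spaces, the Bessel-potential kernel estimates capturing the two-derivative gain from the time integration in $II$, and the mismatch estimates together replace the simpler $L^1$-kernel arguments available in the diffusion-wave setting.
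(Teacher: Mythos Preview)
Your overall strategy matches the paper's three-step approach, and your Littlewood--Paley/stationary-phase route to the $C^{n+s}\to C^s$ bound for $e^{it\Delta}$ is essentially the Miyachi multiplier theorem the paper invokes. There is, however, a gap in your handling of the Duhamel term $II$. The symbol identity $\int_0^t e^{-i\tau|\xi|^2}\,d\tau=(1-e^{-it|\xi|^2})/(i|\xi|^2)$ only yields a two-derivative gain when applied to a \emph{time-independent} function; for $\int_0^t e^{i(t-s)\Delta}f(s,\cdot)\,ds$ with $f$ merely $C^0_t$ there is no such factorisation, and no $|\xi|^{-2}$ gain is available without at least one time derivative on $f$ to integrate by parts in $s$. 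Likewise, the uniform-in-$\tau$ $L^1$ bound on $J^\sigma S_1(\tau,\cdot)$ for $\sigma>n$ only gives $e^{i\tau\Delta}:C^{n+\epsilon}\to C^\epsilon$ uniformly, hence $II(t,\cdot)\in C^\epsilon$, not $C^2$; it does not dominate $\Delta e^{i(t-s)\Delta}f(s,\cdot)$, whose symbol carries an extra $|\xi|^2$ and thus needs $\sigma>n+2$ for an $L^1$ Bessel-potential kernel. (The paper's Section~5 is itself terse on the inhomogeneous part and does not supply a detailed argument either; the direct extrapolation from the $1<\alpha<2$ case would demand $f\in C^{n+2+\epsilon}$.)

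For the pointwise convergence to $u_0$ the paper takes a shorter route than your mismatch/integral-identity approach: since $(1+|\xi|^2)^{-(n+\epsilon)/2}\in L^1(\mathbb R^n)$, one writes $e^{it\Delta}u_0=J^{n+\epsilon}S_1(t,\cdot)*J^{-(n+\epsilon)}u_0$ and passes to the limit by dominated convergence, using $J^{n+\epsilon}S_1(t,\cdot)\to G^{n+\epsilon}$ pointwise, so no mismatch estimate is needed. Your identity $I(t,x)-u_0(x)=\int_0^t i\Delta e^{is\Delta}u_0\,ds$ does work but spends the two extra derivatives on $u_0$ that the paper's argument avoids.
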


\begin{rem}
If we ignore the small $\epsilon$, the Theorem \ref{main-result-1} and \ref{main-result-2} are sharp from the perspective of Fourier multiplier results. However, using the methods in this paper, we need a small extra $\epsilon$ to guarantee the existence of classical solution. Whether we can remove this $\epsilon$ is unclear at this moment.	
\end{rem}

\begin{rem} Although the problem in this paper is inspired by the wave equation, the methods used here cannot be applied to recover the known results for the wave equation. The main reason is that the Fourier multiplier $e^{i|\xi|^{\frac{2}{\alpha}}}$ for $\alpha=2$  behaves quite differently with the multiplier $1<\alpha< 2$. The corresponding kernel for the former has singularity at $|x|=1$, whereas the singular points for the latter appears only at $|x|=0$ or $|x|=\infty$, \cite{Miyachi} for the details.
\end{rem}

This paper is organized as follows. Some preliminaries are given in section 2. In section 3, we derive the asymptotic behavior for the oscillatory kernel functions, and some delicate upper bounds for their Bessel potentials are obtained by using Fourier analysis methods. In subsection 4.1, we explore the Fourier multiplier properties for the oscillatory Mittag--Leffler functions and prove the space regularity in H\"older spaces. In subsection 4.2, the time regularity is proved using the estimates for the Bessel potentials of the corresponding kernels. In subsection 4.3, we prove the pointwise convergence to the initial data by using the almost finite speed propagation property for the solutions. The Schr\"odinger equation will be discussed in section 5.

 \section{Preliminaries}

In this section, we will review some basic definitions and properties about the fractional derivatives, Mittag--Leffler functions, Holder spaces and Besov spaces and closely related Bessel potentials. Some results on singular Fourier multipliers  are also included. Also we recall one version Fa\'{a} di Bruno's formula which will be used in our paper later. Finally, the classical solutions and mild solutions are given using the notations introduced above.

 We denote by $\mathcal S$ the Schwartz space on $\mathbb R^n$ and by  $\mathcal S'$ of temper distributions. The Fourier transform and the inverse Fourier transform are defined by
 \[\mathcal F(f)(\xi)=\widehat f(\xi)=\frac{1}{(2\pi)^{n/2}}\int_{\mathbb R^n} e^{-i \xi \cdot x} f(x)\dd{x},\]
 and \[\mathcal F^{-1}(f)(x)=\frac{1}{(2\pi)^{n/2}}\int_{\mathbb R^n} e^{i x \cdot \xi} f(\xi)\dd{\xi}.\]
 For a function $m(\xi)$, we can associate it with an operator $T_m$ given by
 \[T_m f(x)=\frac{1}{(2\pi)^{n/2}}\int_{\mathbb R^n} e^{i x \cdot \xi} m(\xi)\widehat f(\xi)\dd{\xi}.\]
We say that `$m(\xi)$ is a multiplier from $X$ to $Y$' if $T_m$ is a bounded operator from $X$ to $Y$.

 \subsection{Fractional derivatives and properties}
 In this subsection, we review some definitions and properties about fractional derivatives, see \cite{Podlubny}.

For $\alpha> 0$, we denote $g_\alpha(t)$ for
\begin{equation}\label{g function}
	g_\alpha(t):=\begin{cases} \frac{t^{\alpha-1}}{\Gamma(\alpha)},\quad &t>0,\\ 0,\quad &t\leq 0,
	\end{cases}
\end{equation}
where $\Gamma(\alpha)$ is Gamma function. When $\alpha=0$, we denote $g_0(t)=\delta(t)$.
And the Riemann-Liouville fractional integral of oder $\alpha\geq 0$ is defined as follows:
\begin{align*}
	I_t^\alpha f(t):=(g_\alpha *f)(t).
\end{align*}
Let us use $\lceil \alpha \rceil$ to denote the smallest integer greater than or equal to $\alpha$.
Then the  Riemann-Liouville fractional derivative of order $\alpha>0$ is given by
\begin{equation*}
	\partial_t^\alpha f(t)= D_t^{\lceil \alpha \rceil}I_t^{\lceil \alpha \rceil-\alpha}f(t),
\end{equation*}
where $D_t^m =\frac{d^m}{dt^m}$ is the classical $m$th derivative.

Generally, the Caputo derivative of order $\alpha>0$ is given by
\begin{equation*}
	D_t^\alpha f(t)=I_t^{\lceil \alpha \rceil-\alpha} D_t^{\lceil \alpha \rceil}f(t).
\end{equation*}
From the above definitions, it is obvious that we need $f(t)$ to be $C^{\lceil \alpha \rceil}$-times differentiable to define the Caputo derivatives. Whereas, less regularity is required to define its Riemann-Liouville derivatives.

 In addition, it is easy to check that the following equation is valid for smooth enough function $f(t)$ using integration by parts,
\begin{equation}\label{RL-C relationship}
	D_t^\alpha f(t)=\partial_t^\alpha \big(f(t)-\sum_{k=0}^{\lceil \alpha \rceil-1}f^{(k})(0) g_{k+1}(t)\big).
\end{equation}
Therefore, we can redefine the Caputo derivative using Riemann-Liouville fractional derivative according to \eqref{RL-C relationship} to lower the regularity for $f(t)$.

Applying the properties of the Laplace transform, we have
\begin{align*}
\mathcal L(\partial_t^\alpha f(t))(\lambda)&= \lambda ^\alpha \mathcal L(f(t))-\sum_{k=0}^{\lceil \alpha \rceil-1}(g_{\lceil \alpha \rceil-\alpha}*f)^{(k)}(0)\lambda^{\lceil \alpha \rceil-1-k},\\
\mathcal L(D_t^\alpha f(t))(\lambda)&= \lambda ^\alpha \mathcal L(f(t))-\sum_{k=0}^{\lceil \alpha \rceil-1}f^{(k)}(0)\lambda^{\lceil \alpha \rceil-1-k}.	
\end{align*}
where $\mathcal L$ denotes the Laplace transform. As we can see from the above equalities, the Laplace transform for the Caputo derivative is simpler than the Riemann-Liouville derivative, that is part of the reason why we choose the Caputo fractional derivatives in our paper.

To be more clear, we give the definition $D_t^\alpha$ for $1<\alpha <2$ in the following:
 \begin{defn}
Let $1<\alpha<2$, using \eqref{RL-C relationship} we define the fractional Caputo derivative of a function $f(t)$ by
 \begin{align*}
 	D_t^\alpha f(t)=D^2_{t}\left(\frac{1}{\Gamma(2-\alpha)}\int_0^t (t-\tau)^{1-\alpha}\big(f(\tau)-f(0)-f'(0)\tau\big)\dd{\tau}\right),
 \end{align*}
 	
 \end{defn}

 For the above definition, we have
\begin{align}
	\mathcal L(D_t^\alpha f(t))(\lambda)= \lambda ^\alpha \mathcal L(f(t))-\lambda ^{\alpha-1}f(0)-\lambda ^{\alpha-2}f'(0).
\end{align}

Now we introduce time H\"older space which closely related to the fractional Caputo derivatives above, and it is different with the spatial H\"older spaces which will be introduced later.
\begin{defn}\label{time holder space}  We use $C_t^0(0,\infty)$ to denote the set of all the continuous and uniformly bounded functions with respect to $t>0$.  For $1<\alpha<2$, we say a function $f(t)\in C_t^{\alpha}(0,\infty)$ if $g_{2-\alpha}*f(t) \in C^2{(0,\infty)}$.
 \end{defn}

\subsection{Mittag--Leffler functions}
In this subsection, we introduce the Mittag--Leffler functions, its asymptotic behaviors, and its derivatives. More information about Mittag--Leffler functions can be found in \cite{Gorenflo2014}.
\begin{defn} \cite[p.56]{Gorenflo2014}
	The two parameter Mittag--Leffler functions are given by
\begin{align}\label{definition-of-ml}
	E _ { \alpha , \beta } ( z ) = \sum _ { k = 0 } ^ { \infty } \frac { z ^ { k } } { \Gamma ( \alpha k + \beta ) } ,\quad ( \alpha > 0 , \beta \in \mathbb { C } ).
	\end{align}
\end{defn}

The most interesting properties of the Mittag--Leffler functions are associated with their Laplace transforms,
\[\mathcal L(t^{\beta-1}E_{\alpha, \beta}(\omega t^{\alpha} )(\lambda )=\int_{0}^\infty e^{-\lambda t}t^{\beta-1}E_{\alpha, \beta}(\omega t^{\alpha} )\dd{t}=\frac{\lambda^{\alpha-\beta}}{\lambda^\alpha-\omega} .\]

In the following, we give the asymptotic behavior of Mittag--Leffler functions.
\begin{thm}\cite[p.64 Theorem 4.3]{Gorenflo2014}
\label{thm-asymptotic}
 For $0<\alpha<2$, $\beta\in \mathbb C$, $m \in \mathbb N$, the following asymptotic formulas hold:
 \begin{enumerate}
 	\item  For any $z \in \mathbb C $, we have
 	\begin{align}\label{asymptotic-of-ml-zero}
 	E _ { \alpha , \beta } ( z )=\sum_{k=0}^{m}\frac{z^k}{\Gamma(\alpha k+\beta)}+O(|z|^{m+1}),\quad |z|\to 0. \end{align}
 	\item If $|\arg z|<\min \{\pi, \alpha \pi\} $, then
\begin{align}\label{asymptotic-of-ml-infity}	
E _ { \alpha , \beta } ( z ) = \frac { 1 } { \alpha } z ^ { ( 1 - \beta ) / \alpha } \mathrm { e } ^ { z ^ { 1 / \alpha } } - \sum _ { k = 1 } ^ { m } \frac { z ^ { - k } } { \Gamma ( \beta - k \alpha ) } + O \left( | z | ^ { - m - 1 } \right) ,\quad | z | \rightarrow \infty.\end{align}	
 \end{enumerate}
In particular, when $|z|\geq 1$, with $|\arg z |=\frac{\pi}{2}$, then we have
\begin{align}\label{boundedness-of-mf}
	|z^{\beta-1}E_{\alpha,\beta}(z^\alpha)|\leq C.
\end{align}
\end{thm}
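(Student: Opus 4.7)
The plan is to follow the classical derivation based on the Hankel contour representation of the reciprocal Gamma function, treating the three claims in sequence.

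First, for \eqref{asymptotic-of-ml-zero} I would split the defining series \eqref{definition-of-ml} at index $m+1$ and factor $z^{m+1}$ out of the tail $\sum_{k=m+1}^\infty z^k/\Gamma(\alpha k+\beta)$. What remains is dominated uniformly on $|z|\le 1$ by the absolutely convergent series $\sum_{j\ge 0} 1/|\Gamma(\alpha(j+m+1)+\beta)|$, which produces the $O(|z|^{m+1})$ bound.

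For \eqref{asymptotic-of-ml-infity}, the starting point is the Hankel representation $\frac{1}{\Gamma(s)} = \frac{1}{2\pi i}\int_{\mathrm{Ha}} e^{\zeta}\zeta^{-s}\dd{\zeta}$, which I would substitute into \eqref{definition-of-ml} and, after interchanging sum with integral and summing the geometric series $\sum_k (z\zeta^{-\alpha})^k$, obtain the compact integral representation
\[
E_{\alpha,\beta}(z) = \frac{1}{2\pi i}\int_{\mathrm{Ha}} \frac{e^{\zeta}\zeta^{\alpha-\beta}}{\zeta^{\alpha}-z}\dd{\zeta}.
\]
Under the hypothesis $|\arg z|<\min\{\pi,\alpha\pi\}$ there is a unique root $\zeta_0 = z^{1/\alpha}$ of $\zeta^\alpha-z=0$ with $|\arg \zeta_0|<\pi/2$. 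Deforming the Hankel contour so that it encloses $\zeta_0$ picks up the residue $\tfrac{1}{\alpha}z^{(1-\beta)/\alpha}e^{z^{1/\alpha}}$, and on the shifted contour the estimate $|\zeta^\alpha/z|<1$ permits the expansion
\[
\frac{1}{\zeta^\alpha-z} = -\sum_{k=1}^{m}\frac{\zeta^{\alpha(k-1)}}{z^k} + \frac{\zeta^{\alpha m}}{z^m(\zeta^\alpha-z)}.
\]
Integrating the $k$th term against $e^\zeta\zeta^{\alpha-\beta}/(2\pi i)$ collapses by the Hankel formula to $-z^{-k}/\Gamma(\beta-k\alpha)$, while the remainder integral contributes $O(|z|^{-m-1})$.

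For \eqref{boundedness-of-mf} I would apply \eqref{asymptotic-of-ml-infity} with $z$ replaced by $z^\alpha$. Since $0<\alpha<2$, the critical line $|\arg z|=\pi/2$ gives $|\arg(z^\alpha)|=\alpha\pi/2<\min\{\pi,\alpha\pi\}$, so the expansion is valid. Using $(z^\alpha)^{(1-\beta)/\alpha}=z^{1-\beta}$ and multiplying through by $z^{\beta-1}$ yields
\[
z^{\beta-1}E_{\alpha,\beta}(z^\alpha) = \frac{1}{\alpha}e^{z} - \sum_{k=1}^m \frac{z^{\beta-1-k\alpha}}{\Gamma(\beta-k\alpha)} + O\!\left(|z|^{\Re\beta-1-(m+1)\alpha}\right).
\]
Because $z$ is purely imaginary $|e^z|=1$, and for $|z|\ge 1$ the remaining polynomial terms stay controlled once $m$ is taken large enough, delivering the uniform bound.

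The main technical obstacle is the contour deformation in the second step: one must justify moving the Hankel contour past the pole at $\zeta_0$ without crossing the branch cut of $\zeta^{\alpha-\beta}$ (which is precisely where the hypothesis $|\arg z|<\min\{\pi,\alpha\pi\}$ enters) and verify that the geometric expansion converges uniformly on the shifted contour so that term-by-term integration is legitimate. Once this deformation is established, both the small-$|z|$ expansion and the boundedness on the imaginary axis are routine consequences.
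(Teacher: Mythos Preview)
The paper does not supply a proof of this theorem; it is quoted from \cite{Gorenflo2014} and used as input. Your outline for \eqref{asymptotic-of-ml-zero} and \eqref{asymptotic-of-ml-infity} is exactly the classical Hankel-contour argument found in that reference, so on those two parts there is nothing to compare.

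Your justification of \eqref{boundedness-of-mf}, however, has a gap. You claim that the terms $z^{\beta-1-k\alpha}/\Gamma(\beta-k\alpha)$ ``stay controlled once $m$ is taken large enough,'' but enlarging $m$ only appends further terms to the finite sum; it cannot suppress the ones already present with small $k$. What one actually needs is $\Re\beta-1-k\alpha\le 0$ for every $k\ge 1$, i.e.\ $\Re\beta\le 1+\alpha$. This inequality holds for all the values $\beta\in\{1,2,\alpha,2-\alpha,\alpha-1,\ldots\}$ that the paper subsequently uses with $1<\alpha<2$, so the downstream applications are safe; but for arbitrary $\beta\in\mathbb C$ the bound \eqref{boundedness-of-mf} as stated is in fact false (take $\beta=5$, $\alpha=1.1$, $z=it$ with $t\to\infty$: the $k=1$ term grows like $t^{2.9}$). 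Replace your ``$m$ large enough'' sentence by the exponent check $\Re\beta-1-k\alpha\le 0$, noting the implicit restriction on $\beta$, and your argument is complete.
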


The next theorem deals with the derivatives of the Mittag--Leffler functions.
\begin{thm} \cite[p.58]{Gorenflo2014} Let $m\geq 1$ be an integer, then \begin{align}\label{derivative-formula}
		 \frac { d^m } {  d  z ^m}   \left[ z ^ { \beta - 1 } E _ { \alpha , \beta } \left( z ^ { \alpha } \right) \right] = z ^ { \beta - m - 1 } E _ { \alpha , \beta - m } \left( z ^ { \alpha } \right), \quad ( m \geq 1 ).
	\end{align}

 	\end{thm}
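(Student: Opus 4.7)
The plan is to prove the derivative identity by term-by-term differentiation of the defining power series. By \eqref{definition-of-ml},
\[
z^{\beta-1} E_{\alpha,\beta}(z^{\alpha}) \;=\; \sum_{k=0}^{\infty} \frac{z^{\alpha k + \beta - 1}}{\Gamma(\alpha k + \beta)}.
\]
Since $E_{\alpha,\beta}$ is entire in its argument for $\alpha > 0$, this series converges absolutely and uniformly on compact subsets (away from the branch locus of $z^{\beta-1}$ when $\beta \notin \mathbb{Z}$), so it may be differentiated termwise as often as we like.

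Carrying out $m$ differentiations on the $k$-th summand produces
\[
\frac{d^m}{dz^m}\!\left[\frac{z^{\alpha k + \beta - 1}}{\Gamma(\alpha k + \beta)}\right] \;=\; \frac{(\alpha k + \beta - 1)(\alpha k + \beta - 2)\cdots(\alpha k + \beta - m)}{\Gamma(\alpha k + \beta)}\, z^{\alpha k + \beta - m - 1}.
\]
Applying the functional equation $\Gamma(w) = (w-1)(w-2)\cdots(w-m)\,\Gamma(w-m)$ with $w = \alpha k + \beta$ collapses the prefactor to $1/\Gamma(\alpha k + \beta - m)$, so the $k$-th term becomes $z^{\alpha k + \beta - m - 1}/\Gamma(\alpha k + \beta - m)$. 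Factoring $z^{\beta - m - 1}$ out of the sum and comparing with \eqref{definition-of-ml} at parameters $(\alpha, \beta - m)$ delivers exactly $z^{\beta - m - 1} E_{\alpha,\beta - m}(z^{\alpha})$, which is the claimed formula.

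The only (rather mild) obstacle is the bookkeeping for exceptional parameter values: when some $\alpha k + \beta - j$ with $1 \le j \le m$ is a non-positive integer, one of the numerator factors vanishes while the reciprocal of $\Gamma(\alpha k + \beta - m)$ may also vanish, so the manipulation must be read as a limit. This is handled either by direct cancellation within the pole–zero pair, or by regarding both sides as meromorphic (in fact entire in the sum) functions of $\beta$ and invoking analytic continuation from a generic strip where all Gamma factors are finite and non-zero. For non-integer $\beta$, the statement is to be understood on $\mathbb{C}$ slit along the negative real axis with a fixed branch of $z^{\beta-1}$, on which the termwise differentiation is unambiguous.
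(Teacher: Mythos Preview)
Your termwise differentiation argument is correct and is the standard proof of this identity; the key step is the Gamma-function recursion $\Gamma(\alpha k+\beta)=(\alpha k+\beta-1)\cdots(\alpha k+\beta-m)\,\Gamma(\alpha k+\beta-m)$, which you apply correctly. The paper itself does not supply a proof of this statement at all: it simply quotes the formula from \cite[p.~58]{Gorenflo2014} as a known result, so there is nothing to compare against beyond noting that your argument is exactly the one found in that reference.
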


 \subsection{H\"older spaces and Besov spaces}
 In this subsection, we will recall the H\"older spaces and Besov spaces in $\mathbb R^n$, see \cite{Tribel1983}.

  Let $\gamma=(\gamma_1, \cdots, \gamma_n)$ be a multi-index of non-negative integers, and $|\gamma |=\gamma_1+\cdots+\gamma_n$, then denote \[D^\gamma f(x)=\frac{\partial^{|\gamma|}f(x)}{\partial x_1^{\gamma_1}\cdots\partial x_n^{\gamma_n} }. \]

 \begin{defn}(\cite[p.36]{Tribel1983}\label{holder space} {\bf{ H\"older spaces $C^{s}(\mathbb R^n)$}}).
 We use $ C^0(\mathbb R^n)= C(\mathbb R^n)$ to denote the set of all complex-valued bounded and uniformly continuous functions on $\mathbb R^n$.
When $s\geq 0$ is an integer,
\begin{align*}
 C^s(\mathbb R^n)&=\{f:\mathbb R^n \to \mathbb C \mid D^{\gamma}f\in C(\mathbb R^n), \quad \text{for all } |\gamma|\leq s\}, \  \text{with}\\
 \|f\|_{C^s(\mathbb R^n)} &= \sum_{|\gamma|\le s } \|D^\gamma f\|_{C(\mathbb R^n)}.
\end{align*}
If $s>0$ is not an integer, then we write $s=[s]+\{s\}$ with $[s]$ integer, and $0< \{s\}<1$, then
\begin{align*}
 C^s(\mathbb R^n)&=\{f:\mathbb R^n \to \mathbb C \mid f\in C^{[s]}(\mathbb R^n), \  \|f\|_{C^{s}(\mathbb R^n)}<\infty\},\ \text{with}\\
 \|f\|_{C^{s}(\mathbb R^n)}&=\|f\|_{C^{[s]}(\mathbb R^n)}+\sum_{|\gamma|=[s]}\sup_{x\neq y }\frac{|D^\gamma f(x)-D^\gamma f(y)|}{|x-y|^{\{s\}}}.
\end{align*}
\end{defn}
 In this paper, we use $C^0_t((0,\infty); C^\theta(\mathbb R^n))$ to denote all the functions $f(t,x)$ such that $\|f(t,x)\|_{C^\theta(\mathbb R^n)}\in C^0_t(0,\infty)$.

\begin{defn}(\cite[p.45]{Tribel1983} {\bf{Inhomogeneous Besov space}}) Let $\{\psi_j\}_{j\geq 0}\subset \mathcal S(\mathbb R^n)$ be a family of functions such that
\[
\begin{cases}
  \operatorname{supp} \psi_0 \subset \{x||x|\leq 2\},\\
    \operatorname{supp} \psi_j \subset \{x|2^{j-1}\leq |x|\leq 2^j\},\  j\geq 1,
\end{cases}\]
for every multi-index $\gamma$ there exists a positive number $c_\gamma$ such that $2^{j|\gamma|}|D^{\gamma}\psi_j(x)|\leq c_\gamma$ for all $x\in \mathbb R^n$ and $\sum\limits_{j=0}^{\infty}\psi_j(x)=1$ for every $x\in \mathbb R^n$.

The inhomogeneous Besov space is given by
\[{B}_{p,q}^{s}(\mathbb R^n)=\{f \in \mathcal{S}'(\mathbb R^n): \|f\|_{{B}_{p,q}^{s}(\mathbb R^n)}<\infty\},\]
where
\[\|f\|_{{B}_{p,q}^s}:=
 	\begin{cases}
 		\Big(\sum\limits_{j=0}^{\infty}(2^{js}\|(\psi_j \hat {f})^{\vee}\|_{L^p})^q\Big)^{1/q},& \quad 1\leq q <\infty,\\
		\sup\limits_{j\geq 0} 2^{js} \|(\psi_j \hat {f})^{\vee}\|_{L^p}, &\quad q=\infty.
	\end{cases}\]	
	\end{defn}

According to the Lemma 1 in \cite{Taibleson1965} we have the following `Young's Theorem' in H\"older spaces given as:
\begin{lem} \label{Besov multiplier}

If $f\in C^s(\mathbb R^n)$ and $ g\in B_{1,\infty}^0(\mathbb R^n)$, then
$f*g \in C^s(\mathbb R^n)$.
	
\end{lem}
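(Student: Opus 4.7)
The plan is to reduce the statement to its frequency-localised counterpart via the Besov description of H\"older spaces. For non-integer $s>0$, one has the classical identification $C^{s}(\mathbb R^n)=B^s_{\infty,\infty}(\mathbb R^n)$ with equivalent norms,
\begin{equation*}
\|h\|_{C^{s}(\mathbb R^n)}\sim\sup_{j\geq 0}2^{js}\|(\psi_j\widehat h)^\vee\|_{L^\infty(\mathbb R^n)},
\end{equation*}
as proved in \cite{Tribel1983}. So the lemma will follow once we show that $\sup_{j\geq 0}2^{js}\|(\psi_j\widehat{f*g})^\vee\|_{L^\infty}\leq C\|f\|_{C^s}\|g\|_{B^0_{1,\infty}}$.

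Write $f_j=(\psi_j\widehat f)^\vee$ and $g_k=(\psi_k\widehat g)^\vee$. The convolution identity $\widehat{f*g}=(2\pi)^{n/2}\widehat f\,\widehat g$ and the support condition $\operatorname{supp}\psi_j\cap\operatorname{supp}\psi_k=\emptyset$ for $|j-k|\geq 2$ together yield
\begin{equation*}
(\psi_j\widehat{f*g})^\vee=f_j*g=\sum_{|k-j|\leq 1}f_j*g_k,
\end{equation*}
where the series $g=\sum_{k\geq 0}g_k$ collapses to at most three terms for each fixed $j$. Young's convolution inequality together with the two Besov characterisations then gives
\begin{equation*}
\|f_j*g_k\|_{L^\infty}\leq\|f_j\|_{L^\infty}\|g_k\|_{L^1}\leq C\,2^{-js}\|f\|_{C^s}\|g\|_{B^0_{1,\infty}},
\end{equation*}
so that summing over the three values of $k$ produces the desired bound on $(\psi_j\widehat{f*g})^\vee$. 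Therefore $f*g\in B^s_{\infty,\infty}=C^s(\mathbb R^n)$.

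The delicate point is the equivalence $C^s=B^s_{\infty,\infty}$, which holds only for non-integer $s$: the Besov space on the right is the (strictly larger) Zygmund class in the integer case, and one then has to argue directly by differentiating $D^\gamma(f*g)=(D^\gamma f)*g$ inside the distributional pairing, together with a truncation $g=\sum_{k\leq N}g_k+R_N$ to reduce to ordinary $L^\infty*L^1$ convolutions, as in Taibleson \cite{Taibleson1965}. Since every $C^s$-norm that will appear later carries a strict $\epsilon$-surplus, only the non-integer case is actually required in what follows.
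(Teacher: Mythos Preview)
The paper does not give a proof of this lemma at all; it simply cites Lemma~1 of Taibleson \cite{Taibleson1965}. Your proposal therefore goes further than the paper by supplying an argument, and the Littlewood--Paley route you take is the natural modern one. The essential estimate --- Young's inequality on each frequency block together with the near-diagonal collapse $|j-k|\le 1$ --- is correct and gives the bound in $B^s_{\infty,\infty}$ at once.

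Two small points are worth tightening. First, the line $(\psi_j\widehat{f*g})^\vee=f_j*g$ tacitly assumes that $\widehat f\,\widehat g$ is well defined as a product of tempered distributions; it is cleaner to \emph{define} $f*g$ as the $L^\infty$-limit of $\sum_{j\le N} f_j*g$ (which converges since $\|f_j*g\|_{L^\infty}\lesssim 2^{-js}$ by your own estimate) and then read off its Littlewood--Paley pieces. Second, your remark about the integer case is accurate: the identification $C^s=B^s_{\infty,\infty}$ fails there and one must argue via $D^\gamma(f*g)=(D^\gamma f)*g$ together with the $s=0$ case. Your observation that every subsequent application in the paper carries an $\epsilon$-surplus, so that only non-integer $s$ is needed, is correct and in keeping with how the paper actually uses the lemma (in Lemma~\ref{multiplier-inside a ball}, where $\theta$ can be taken non-integer without loss).
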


\subsection{Bessel potentials}
In this subsection, we introduce the Bessel potentials and its operator properties acting on the H\"older space.

\begin{defn}[See \cite{Calderon}]
For any $\theta \in \mathbb R$, the Bessel potential of order $\theta$  for a tempered distribution $f$ is defined by
\[J^{\theta}f(x)=\mathcal F^{-1}((1+|\xi|^2)^{-\theta/2}\widehat f(\xi))(x). \]
For $\theta>0$, the function $(1+|\xi|^2)^{-\theta/2}$ is the Fourier transform of a integrable function $G^\theta(x)$ where

\begin{equation}\label{bessel-function}
G^\theta(x) =\frac{1}{\Gamma(\theta /2)} \int_{0}^\infty e^{-|x|^2/s}e^{-s/4} s^{(-n+\theta)/2-1}\frac{\dd{s}}{s}.\end{equation}
\end{defn}

From a straightforward examination of the integral \eqref{bessel-function}, we have for some $c>0$,
\begin{align}
	G^\theta(x)&=O(e^{-c|x|}),  & &\text{as} \quad |x|\to \infty, \quad \text{and}\\
	G^\theta(x)&=\frac{|x|^{-n+\theta}}{\gamma(\theta)}+o({|x|^{-n+\theta}}),  &&\text{as} \quad |x|\to 0,
\end{align}
 if $0<\alpha <n$.

The Bessel potential acts like a smoothing operator in H\"older space, in the following sense:
\begin{prop} \cite[Theorem 6]{Taibleson1964} \label{Bessel map}
 For any $\theta \in \mathbb R$, $J^\theta$ maps $C^{s}(\mathbb R^n)$ isomorphically onto $C^{s+\theta}(\mathbb R^n)$.	
\end{prop}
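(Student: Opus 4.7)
The plan is to reduce the isomorphism statement to the one-sided bound
\[\|J^\theta f\|_{C^{s+\theta}(\mathbb{R}^n)} \lesssim \|f\|_{C^s(\mathbb{R}^n)} \quad \text{for all } s,\theta \in \mathbb{R},\]
since the semigroup relation $J^{\theta_1}J^{\theta_2} = J^{\theta_1+\theta_2}$ and $J^0 = \mathrm{Id}$ are immediate from the multiplicativity of the Fourier symbol $(1+|\xi|^2)^{-\theta/2}$. Once the bound is available for every $\theta\in\mathbb{R}$, the operator $J^{-\theta}$ is a two-sided bounded inverse of $J^\theta$ and the map is automatically an isomorphism.

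To prove the one-sided bound I would work through the Besov characterization. Decomposing $f = \sum_{j\geq 0}\Delta_j f$ with $\Delta_j f = (\psi_j\widehat{f})^\vee$ for the partition of unity $\{\psi_j\}$ fixed in the Besov subsection, and choosing a bump $\widetilde{\psi}_j$ equal to $1$ on $\operatorname{supp}\psi_j$ but still supported in a slightly enlarged annulus, one writes
\[J^\theta \Delta_j f = K_j * \Delta_j f, \qquad K_j := \mathcal{F}^{-1}\bigl(\widetilde{\psi}_j(\xi)(1+|\xi|^2)^{-\theta/2}\bigr).\]
A rescaling argument together with Mikhlin-type bounds on the symbol (its derivatives of order $N$ are $O(2^{-j(\theta+N)})$ on $|\xi|\sim 2^j$) yields $\|K_j\|_{L^1}\lesssim 2^{-j\theta}$ for $j\geq 1$, and $\|K_0\|_{L^1}=O(1)$. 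Young's inequality then delivers $\|\Delta_j J^\theta f\|_{L^\infty}\lesssim 2^{-j\theta}\|\Delta_j f\|_{L^\infty}$, which is precisely the Besov estimate $\|J^\theta f\|_{B_{\infty,\infty}^{s+\theta}}\lesssim \|f\|_{B_{\infty,\infty}^s}$. For non-integer $s>0$ the Besov space $B_{\infty,\infty}^s$ coincides with the H\"older space of Definition~\ref{holder space}, so this immediately supplies the desired bound in that regime.

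The main obstacle is the integer regime, since for integer $s$ the space $C^s(\mathbb{R}^n)$ of Definition~\ref{holder space} is strictly smaller than the Zygmund space $B_{\infty,\infty}^s$, and a literal Besov-to-H\"older identification fails. My strategy would be to exploit that $J^\theta$ commutes with every constant-coefficient differential operator: for $|\gamma|\leq[s+\theta]$ we have $D^\gamma J^\theta f = J^\theta D^\gamma f$, reducing matters to a finite collection of Bessel potentials acting on lower-order data. When the target exponent itself lies in $\mathbb{Z}$, I would either perturb $\theta$ to a nearby non-integer so that the target space is H\"older in the strict sense and then pass to a limit using continuity of $G^\theta$ in $\theta$, or verify the borderline continuity directly, using $G^\theta\in L^1$ for $\theta>0$ together with the semigroup identity $J^\theta=J^{\theta+k}J^{-k}$ (with integer $k$ large enough) to reach $\theta\leq 0$. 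Applying the same reasoning to $J^{-\theta}$ then closes the isomorphism.
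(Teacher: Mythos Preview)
The paper supplies no proof here: Proposition~\ref{Bessel map} is quoted directly from Taibleson \cite[Theorem~6]{Taibleson1964} and no argument is given. Your Littlewood--Paley route through $B_{\infty,\infty}^s$ is the standard modern proof and is correct whenever both $s$ and $s+\theta$ are non-integers; it is equivalent to Taibleson's original second-difference argument.

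Your caution about the integer regime is well placed, but the proposed repairs do not close the gap. The perturbation-in-$\theta$ idea fails because the classical $C^k$ norm is not a limit of the $C^{k\pm\epsilon}$ norms (the Zygmund class $B_{\infty,\infty}^k$ sits strictly between them), so continuity in $\theta$ gives nothing. The commutation $D^\gamma J^\theta = J^\theta D^\gamma$ only applies for $|\gamma|\le [s]$, which does not reach the $[s+\theta]$ derivatives you need when $\theta>0$. In fact the isomorphism, read literally with the classical spaces of Definition~\ref{holder space}, is \emph{false} when one of $s$, $s+\theta$ is an integer: for example $J^\theta:C^0\to C^\theta$ with $0<\theta<1$ cannot be onto, since $J^\theta$ already carries the strictly larger space $B_{\infty,\infty}^0$ isomorphically onto $C^\theta=B_{\infty,\infty}^\theta$. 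Taibleson's theorem is stated for his Lipschitz spaces $\Lambda_\alpha$, which coincide with $C^\alpha$ only at non-integer $\alpha$ and with the Zygmund class at integers. This discrepancy is harmless for the present paper, which only invokes the proposition at non-integer indices of the form $\frac{n}{\alpha}+\epsilon$ and variants thereof, but it means you should not expect to produce a proof of the statement exactly as written.
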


\subsection{Singular multipliers}
In this subsection, we recall some known results on singular Fourier multipliers (see \cite{Miyachi} for more details).

  Let us denote by
 \[m_{a,b}^{\pm}(\xi)=\phi(\xi)|\xi|^{-b}\exp(\pm i|\xi|^a), \ \xi\in \mathbb R^n, \ a>0, \ b\in \mathbb R,\]
 where $\phi$ is a smooth function which vanishes in a neighborhood of the origin and equal to $1$ outside a compact set. For this multiplier, we denote its kernel by $K_{a,b}^{\pm}(x)=\mathcal F^{-1}( m_{a,b}^{\pm}(\xi))$.  According to \cite{Miyachi}, we have the following property:
 \begin{thm}\cite[Proposition 5.1]{Miyachi}
 \label{kernel-asymptotic}
  When $a>1$ and $b\in \mathbb R$, $K^{+}_{a, b}$ is smooth throughout $\mathbb R^n$, and
 \begin{align*}
  K_{a,b}^{+}(x) = A|x|^{\frac{b-n+na/2}{1-a}}\exp(iB|x|^{-a/(1-a)})+o(|x|^{\frac{b-n+na/2}{1-a}}), \quad\text {as}\quad |x|\to \infty,
 \end{align*}
 where $A, B$ are constants which depend on $a, b$ and $n$.	
 \end{thm}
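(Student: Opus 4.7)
The plan is to prove the asymptotic by the method of stationary phase applied to the oscillatory integral
$$K^+_{a,b}(x) = (2\pi)^{-n/2}\int_{\mathbb R^n} e^{i(x\cdot\xi + |\xi|^a)} \phi(\xi)|\xi|^{-b}\,d\xi.$$
For the smoothness claim, since $\phi$ vanishes near the origin, the multiplier $m_{a,b}^+$ is $C^\infty$ and locally integrable; any $x$-derivative of $K^+_{a,b}$ brings in a polynomial factor in $\xi$, which we absorb by a non-stationary phase integration by parts with respect to the operator $L = (i|\nabla\Phi|^2)^{-1}\overline{\nabla\Phi}\cdot\nabla$ with $\Phi = x\cdot\xi+|\xi|^a$. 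The hypothesis $a>1$ ensures $|\nabla\Phi|\gtrsim |\xi|^{a-1}$ for $|\xi|$ large, so iterating $L$ yields an absolutely convergent integral for every fixed $x$, hence $K^+_{a,b}\in C^\infty(\mathbb R^n)$.

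For the asymptotic as $|x|\to\infty$, I first identify the unique critical point of $\Phi$ in $\xi$: setting $\nabla\Phi=0$ gives $\xi_*(x) = -a^{-1/(a-1)}|x|^{1/(a-1)}\hat x$ where $\hat x = x/|x|$. Motivated by this scale, I rescale $\xi = |x|^{1/(a-1)}\eta$, producing
$$K^+_{a,b}(x) = (2\pi)^{-n/2}|x|^{(n-b)/(a-1)}\int_{\mathbb R^n} e^{i\lambda\Psi(\eta)}\phi\bigl(|x|^{1/(a-1)}\eta\bigr)|\eta|^{-b}\,d\eta,$$
with $\lambda := |x|^{a/(a-1)}\to\infty$ and $\Psi(\eta) := \hat x\cdot\eta + |\eta|^a$. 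The critical point is now $\eta_* = -a^{-1/(a-1)}\hat x\neq 0$, and a direct computation of $\nabla^2|\eta|^a$ at $|\eta_*|=a^{-1/(a-1)}$ shows $\det\Psi''(\eta_*) = a^n(a-1)|\eta_*|^{n(a-2)}\neq 0$, so the phase $\Psi$ has a nondegenerate critical point at a point independent of $|x|$.

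Next I split the integrand with a smooth cutoff $\chi$ supported in a fixed small ball around $\eta_*$. On $\operatorname{supp}\chi$, for $|x|$ large enough $\phi(|x|^{1/(a-1)}\eta)\equiv 1$, and the classical stationary phase lemma (for instance H\"ormander, Thm.~7.7.5) gives
$$\int e^{i\lambda\Psi}\chi(\eta)|\eta|^{-b}\,d\eta = \Bigl(\tfrac{2\pi}{\lambda}\Bigr)^{n/2}|\det\Psi''(\eta_*)|^{-1/2} e^{i\pi\sigma/4}e^{i\lambda\Psi(\eta_*)}|\eta_*|^{-b}+O(\lambda^{-n/2-1}).$$
On $\operatorname{supp}(1-\chi)$ one has $|\nabla\Psi|\ge c>0$ in any fixed annulus about $\eta_*$, while for $|\eta|\gg 1$ one has $|\nabla\Psi|\gtrsim |\eta|^{a-1}$; repeated integration by parts using the operator $\lambda^{-1}L_\Psi$ gains factors of $\lambda^{-1}|\eta|^{1-a}$ per iteration and therefore produces a contribution that is $O(\lambda^{-N})$ for every $N$, uniformly in $x$. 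Collecting the leading term and tracking powers of $|x|$ through $\lambda^{-n/2} = |x|^{-na/(2(a-1))}$ and $\lambda\Psi(\eta_*) = (1-a)a^{-a/(a-1)}|x|^{a/(a-1)}$, I obtain exactly the exponent $(b-n+na/2)/(1-a)$ and phase $B|x|^{-a/(1-a)}$ announced in the theorem, with explicit constants $A$, $B$.

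The main obstacle is the non-compactness of the $\eta$ integration: the cutoff $\phi(|x|^{1/(a-1)}\eta)$ removes only a shrinking neighborhood of $\eta=0$, and when $b<0$ the amplitude $|\eta|^{-b}$ is polynomially growing at infinity. The resolution is that the lower bound $|\nabla\Psi|\gtrsim|\eta|^{a-1}$ for large $|\eta|$ means each integration by parts trades a factor $|\eta|^{-b}$ for $\lambda^{-1}|\eta|^{-b+1-a}$, so after finitely many steps one dominates the growth and the far-field contribution is absorbed into the $o\bigl(|x|^{(b-n+na/2)/(1-a)}\bigr)$ remainder, completing the proof.
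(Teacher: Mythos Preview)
The paper does not give its own proof of this statement; it is quoted from Miyachi \cite[Proposition~5.1]{Miyachi} as a known result and then used as a black box to derive Corollary~\ref{equivalence of Riesz and Potential} and Lemma~\ref{asymptotic-behavior-of-kernel}. Your rescaling-plus-stationary-phase argument is the standard route to this asymptotic and is correct in outline, so there is nothing in the paper to compare it against.

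Two points on the write-up are worth tightening. First, in the smoothness argument you cannot iterate the operator $L$ globally in $\xi$, since $\nabla\Phi$ vanishes precisely at the critical point $\xi_*(x)$; the fix is to split off a compact neighborhood of $\xi_*(x)$ (where differentiation under the integral sign is immediate) from the far field where your lower bound $|\nabla\Phi|\gtrsim|\xi|^{a-1}$ applies. Second, in the asymptotic analysis you flag that the cutoff removes only a shrinking neighborhood of $\eta=0$ but then only resolve the tail $|\eta|\to\infty$; the near-origin piece also needs attention when $b>0$ (singular amplitude $|\eta|^{-b}$) or $1<a<2$ (the Hessian $\Psi''$ blows up like $|\eta|^{a-2}$). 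It is handled by the same mechanism---one has $|\nabla\Psi|\ge c>0$ near $\eta=0$, so each integration by parts still gains a full factor $\lambda^{-1}$, and a short bookkeeping shows this beats the losses from differentiating the amplitude and the coefficients of $L_\Psi$. Alternatively and more cleanly, observe that in the original variable any fixed ball $\{|\xi|\le R\}$ contributes a Schwartz function of $x$, hence $O(|x|^{-N})$, which disposes of the entire region $|\eta|\le R|x|^{-1/(a-1)}$ at once.
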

 It is easy to notice that $K_{a,b}^{+}(x)$ and  $K_{a,b}^{-}(x)$ are are complex conjugates of each other, thus they have the same asymptotic behavior.

 In this paper, we will use the following inhomogeneous multiplier $(1+|\xi|^2)^{-b/2}$ instead of homogeneous one $|\xi|^{-b}$, and we give
 the following corollary.
\begin{cor}\label{equivalence of Riesz and Potential}
 	If we denote by
 	 \[\tilde m_{a,b}^{\pm}(\xi)=\phi(\xi)(1+|\xi|^2)^{-b/2}\exp(\pm i|\xi|^a), \ \xi\in \mathbb R^n,\  a>0, \  b\in \mathbb R,\]
 	and $\tilde K_{a,b}^{\pm}(x)=\mathcal F^{-1}(\tilde m_{a,b}^{\pm}(\xi))$, then
 	$\tilde K_{a,b}^{\pm}(x)\sim K_{a,b}^{\pm}(x)$. To be precise, when $a>1$ and $b\in \mathbb R$, $ \tilde K_{a, b}^{\pm}(x)$ is smooth throughout $\mathbb R^n$, and
 \begin{align*}
  \tilde K_{a,b}^{\pm}(x) = A|x|^{\frac{b-n+na/2}{1-a}}\exp(\pm iB|x|^{-a/(1-a)})+o(|x|^{\frac{b-n+na/2}{1-a}}), \quad\text {as}\quad |x|\to \infty,
 \end{align*}
 where $A, B$ are constants which depend on $a, b$ and $n$.
 \end{cor}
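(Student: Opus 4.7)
The plan is to reduce Corollary \ref{equivalence of Riesz and Potential} to Theorem \ref{kernel-asymptotic} by expanding the inhomogeneous weight $(1+|\xi|^2)^{-b/2}$ as an asymptotic series in powers of $|\xi|^{-2}$ on the support of $\phi$, thereby writing $\tilde m_{a,b}^{\pm}$ as a finite linear combination of multipliers $m_{a,b+2k}^{\pm}$ to which the theorem applies directly, plus a regular remainder of higher order. Since $\phi$ vanishes in a neighborhood of the origin, on $\operatorname{supp}\phi$ one may use the binomial series
\[
(1+|\xi|^2)^{-b/2}=|\xi|^{-b}(1+|\xi|^{-2})^{-b/2}=\sum_{k=0}^{N}\binom{-b/2}{k}|\xi|^{-b-2k}+r_N(\xi),
\]
with $r_N\in C^\infty(\operatorname{supp}\phi)$ satisfying $|\partial^\gamma r_N(\xi)|\lesssim (1+|\xi|)^{-b-2(N+1)-|\gamma|}$. (A harmless Schwartz piece arises from reconciling $\phi$ with a cutoff that equals $1$ for $|\xi|$ large enough to make the binomial series converge; it will contribute a Schwartz function to the kernel and thus be absorbed into the $o$-term.)

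Substituting this expansion yields the decomposition
\[
\tilde m_{a,b}^{\pm}(\xi)=\sum_{k=0}^{N}\binom{-b/2}{k}\,m_{a,b+2k}^{\pm}(\xi)+\phi(\xi)\,r_N(\xi)\,\exp(\pm i|\xi|^a)+s(\xi),
\]
with $s\in\mathcal S(\mathbb R^n)$. Taking inverse Fourier transforms, Theorem \ref{kernel-asymptotic} applies to each $m_{a,b+2k}^{\pm}$ and gives
\[
K_{a,b+2k}^{\pm}(x)=A_k\,|x|^{\frac{b+2k-n+na/2}{1-a}}\exp\bigl(\pm iB|x|^{-a/(1-a)}\bigr)+o\bigl(|x|^{\frac{b+2k-n+na/2}{1-a}}\bigr).
\]
Because $a>1$ forces $1-a<0$, the amplitude exponent is strictly decreasing in $k$, so only the $k=0$ term contributes to the leading behavior; every $k\geq 1$ term is already $o\bigl(|x|^{(b-n+na/2)/(1-a)}\bigr)$. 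The constants $A,B$ for $\tilde K_{a,b}^{\pm}$ are thus inherited verbatim from those of $K_{a,b}^{\pm}$, and smoothness on all of $\mathbb R^n$ for this principal sum is part of Theorem \ref{kernel-asymptotic} itself.

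The main obstacle is controlling the remainder $\phi(\xi)\,r_N(\xi)\,\exp(\pm i|\xi|^a)$, since $r_N$ is not homogeneous and the kernel of this symbol is not literally covered by Theorem \ref{kernel-asymptotic}. The fix is to revisit the stationary-phase argument underlying that theorem: for any symbol in the class $\{a\in C^\infty(\mathbb R^n):|\partial^\gamma a(\xi)|\lesssim (1+|\xi|)^{-\sigma-|\gamma|}\}$ supported away from the origin, the method of stationary phase applied at the unique critical point $\xi_*(x)$ of the phase $x\cdot\xi\pm|\xi|^a$ yields a kernel amplitude of order $|x|^{(\sigma-n+na/2)/(1-a)}$. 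Applied with $\sigma=b+2(N+1)$, this produces a remainder kernel of order $|x|^{(b+2(N+1)-n+na/2)/(1-a)}$, which for any $N\geq 1$ is already $o\bigl(|x|^{(b-n+na/2)/(1-a)}\bigr)$ since $1-a<0$. Choosing $N$ large also makes the remainder symbol integrable together with as many derivatives as desired (multiplied by any polynomial in $\xi$), which gives smoothness of $\tilde K_{a,b}^{\pm}$ throughout $\mathbb R^n$. Combining the principal sum with the remainder yields the asserted asymptotic, and the statement for $\tilde K_{a,b}^{-}$ follows by complex conjugation exactly as in the remark preceding the corollary.
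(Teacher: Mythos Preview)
Your approach is correct but takes a genuinely different route from the paper. The paper's proof is a two-line argument: write $\tilde m_{a,b}^{\pm} = g(\xi)\, m_{a,b}^{\pm}$ with $g(\xi) = |\xi|^b/(1+|\xi|^2)^{b/2}$, invoke Lemma~2 of Chapter~V, \S3 in Stein's \emph{Singular Integrals} to conclude that $K_b := \mathcal F^{-1}(g)$ is a finite measure, and then assert that $\tilde K_{a,b}^{\pm} = K_b * K_{a,b}^{\pm}$ inherits the asymptotic of $K_{a,b}^{\pm}$.

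Your binomial expansion amounts to carrying out by hand, at the symbol level, what convolution with $K_b$ does, and it makes the matching of the leading constants completely explicit (the $k=0$ term is literally $m_{a,b}^{\pm}$). What it costs you is the treatment of the remainder $\phi\,r_N\,e^{\pm i|\xi|^a}$: since $r_N$ is not homogeneous, Theorem~\ref{kernel-asymptotic} does not apply verbatim, and you are forced to reopen Miyachi's stationary-phase computation for amplitudes in a symbol class $S^{-\sigma}$. That step is legitimate and your outline of it is accurate, but it is considerably more work than the paper's route. On the other hand, the paper's argument is terse to the point of leaving implicit why convolution with a finite measure preserves an oscillatory asymptotic of the form $A|x|^\mu e^{iB|x|^\nu}$ with $\nu = a/(a-1) > 1$ (the phase varies rapidly, so this is not completely automatic). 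Your approach, though longer, is more self-contained on precisely that point.
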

 \begin{proof}
 	
 Since we can write $\tilde m_{a,b}^{\pm}=\frac{|\xi|^b}{(1+|\xi|^2)^{b/2}} m_{a, b}^{\pm}$. Then by Lemma 2, Chapter 4.3 in \cite{Stein},  $K_b(x):=\mathcal F^{-1}\left(\frac{|\xi|^b}{(1+|\xi|^2)^{b/2}}\right)$ is a finite measure. Therefore, the property is valid.
  \end{proof}


 Let $X$ and $Y$ be spaces of functions or distributions on $\mathbb R^n$, and define $\mathcal M(X,Y)$ to be the spaces of all $m\in \mathcal S'$ such that $\|m\|_{\mathcal M(X,Y)}< \infty$, where
 \[\|m\|_{\mathcal M(X,Y)}=\sup \left\{\frac{\|\mathcal F^{-1}(m\widehat f)\|_{Y}}{\|f\|_{X}}: f\in \mathcal D \cap X, \|f\|_{X}\neq 0\right\}.\]

\begin{thm}\cite [Theorem 5.1]{Miyachi}\label{multiplier-thm}
 Let $a>0$, $\theta \in \mathbb R$. Suppose that $A\geq 1$, $m$
is a function of class $C^{[\frac{n}{2}]+1}$ on $\mathbb R^n$, $m(\xi)=0$ for $|\xi|\leq 2$ and
  \[\left| D ^ { \gamma } m ( \xi ) \right| \leq | \xi | ^ { - \theta- n a / 2 } \left( A | \xi | ^ { a - 1 } \right) ^ { |\gamma | } \quad \text{ for } | \gamma | \leqq [ n / 2 ] + 1.\]
  Then $m \in \mathcal M(C^s(\mathbb R^n), C^{s+\theta}(\mathbb R^n))$, and
  \[\|m\|_{\mathcal M(C^s(\mathbb R^n), C^{s+\theta}(\mathbb R^n))}\leq C A^{\frac{n}{2}},\]
  where the constant $C$ depends only on $a, s, \theta$ and $n$.
\end{thm}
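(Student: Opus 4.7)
The plan is to prove this by a Littlewood--Paley decomposition of the multiplier, reducing everything to an $L^1$ estimate on the Fourier inverse of each dyadic piece, and then assembling via a Young-type inequality in Besov spaces. Let $\{\psi_j\}_{j\geq 0}$ be the standard dyadic partition of unity recalled in the definition of $B^s_{p,q}$; since $m$ vanishes on $|\xi|\leq 2$, only the annular pieces $m_j := m\psi_j$ with $j\geq 1$ (supported in $|\xi|\sim 2^j$) contribute, and I set $K_j := \mathcal F^{-1}(m_j)$.

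The heart of the argument is the uniform bound
\begin{equation*}
\|K_j\|_{L^1(\mathbb R^n)} \leq C A^{n/2}\, 2^{-j\theta}.
\end{equation*}
To prove it, split the integration into $|x|\leq R$ and $|x|\geq R$ for a parameter $R$ to be chosen. On the ball, Cauchy--Schwarz and Plancherel give
\begin{equation*}
\int_{|x|\leq R}|K_j|\,dx \leq C R^{n/2}\|m_j\|_{L^2} \leq C R^{n/2}\, 2^{j(n/2-\theta-na/2)},
\end{equation*}
using the pointwise bound $|m(\xi)|\leq |\xi|^{-\theta-na/2}$ and the support size $|\operatorname{supp} m_j|\lesssim 2^{jn}$. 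On the exterior, set $N=[n/2]+1$; by Cauchy--Schwarz against the weight $(1+|x|^2)^{-N/2}$ followed by Plancherel,
\begin{equation*}
\int_{|x|\geq R}|K_j|\,dx \leq C R^{-(N-n/2)}\|(I-\Delta)^{N/2}m_j\|_{L^2}.
\end{equation*}
The hypothesis on $|D^\gamma m|$ for $|\gamma|\leq N$, combined with the Leibniz rule applied to $m\psi_j$ and the natural bound $|D^\beta\psi_j|\lesssim 2^{-j|\beta|}$, yields $\|D^\gamma m_j\|_{L^2}\lesssim 2^{j(n/2-\theta-na/2)}(A 2^{j(a-1)})^{|\gamma|}$ and hence $\|(I-\Delta)^{N/2}m_j\|_{L^2} \lesssim 2^{j(n/2-\theta-na/2)}(A 2^{j(a-1)})^N$. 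The exponent budget $N=[n/2]+1$ is exactly what the hypothesis provides. Optimizing in $R$ by equating the two halves forces the stationary-phase scale $R = A 2^{j(a-1)}$, and the powers of $2^j$ collapse to give precisely $\|K_j\|_{L^1}\lesssim A^{n/2}\, 2^{-j\theta}$.

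Once the kernel bound is in hand, the multiplier statement follows directly. Since each $K_j$ is the Littlewood--Paley piece of the full kernel $K=\mathcal F^{-1}m$ at scale $2^j$, the estimate above translates into $\|K\|_{B^{\theta}_{1,\infty}}\leq C A^{n/2}$. A Young-type inequality in Besov spaces then gives
\begin{equation*}
\|T_m f\|_{B^{s+\theta}_{\infty,\infty}} = \|K * f\|_{B^{s+\theta}_{\infty,\infty}} \leq C \|K\|_{B^{\theta}_{1,\infty}}\|f\|_{B^s_{\infty,\infty}} \leq C A^{n/2}\|f\|_{B^s_{\infty,\infty}},
\end{equation*}
which, for non-integer $s$ and $s+\theta$, coincides with the H\"older statement via $C^\sigma = B^\sigma_{\infty,\infty}$.

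I expect the main obstacle to be the passage from Besov to the classical H\"older space $C^s$ when $s$ or $s+\theta$ is an integer: the identifications $C^\sigma = B^\sigma_{\infty,\infty}$ fail there, and one must either work with finite differences directly, thread the argument through the Zygmund class, or exploit the $\epsilon$-room afforded in the applications (as indeed the main theorems of the paper use $C^{\bullet + \epsilon}$ regularity on the data). A secondary delicate point is the Leibniz step: the derivative bound on $m_j$ requires checking that the $D^{\gamma-\beta}\psi_j$ factors never dominate the $D^\beta m$ factors, which uses $A\geq 1$ and $j\geq 1$ to ensure $A 2^{j(a-1)}\gtrsim 2^{-j}$ uniformly in $j$.
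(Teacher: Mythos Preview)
The paper does not supply its own proof of this theorem; it is quoted verbatim from Miyachi and used as a black box. Your sketch is essentially the standard proof (and Miyachi's own): Littlewood--Paley localize to $m_j=m\psi_j$, bound $\|K_j\|_{L^1}$ by the near/far splitting with Cauchy--Schwarz and Plancherel, and optimize at $R=A\,2^{j(a-1)}$ to obtain $\|K_j\|_{L^1}\lesssim A^{n/2}2^{-j\theta}$. The assembly via $\|(K*f)_j\|_{L^\infty}\le\|K_j\|_{L^1}\|\tilde f_j\|_{L^\infty}$ is correct and gives $T_m:B^s_{\infty,\infty}\to B^{s+\theta}_{\infty,\infty}$ with norm $\lesssim A^{n/2}$.

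The only real issue is exactly the one you flag: when $s+\theta$ is an integer the Besov endpoint $B^{s+\theta}_{\infty,\infty}$ is the Zygmund class, strictly larger than the $C^{s+\theta}$ defined in the paper, so the conclusion as literally stated does not follow from your argument in that case. In Miyachi's paper this is addressed separately; for the present paper it is irrelevant, since every application carries an $\epsilon>0$ of slack and one may always arrange $s+\theta\notin\mathbb Z$. Your Leibniz remark is also on point: the condition $A\ge1$, $a>0$, $j\ge1$ guarantees $A\,2^{ja}\ge1$, so the top-order term $|\beta|=N$ dominates the sum.
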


%
  \subsection{Fa\'{a} di Bruno's formula}	
Let us also recall Fa\'{a} di Bruno's formula for later use.
 In the following lemma, we use bold symbols to distinguish multi-index integers from integers.

	\begin{lem}	 \cite[2.10. Corollary]{Constantine}\label{faa di bruno}
	Assume that $x\in \mathbb R^n$ and $y \in \mathbb R$.  Let $|\gamma|\geq 1$, and $h(x_1, x_2, \cdots, x_n)=f(g(x_1, x_2, \cdots, x_n)) $ with $g\in C^{|\gamma|}(\mathbb R^n)$ and $f\in C^{|\gamma|}(\mathbb R)$, let $y=g(x)$. There holds:
	
	\[ D^{\boldsymbol{\gamma}}h  = \sum _ { r = 1 } ^ { |\gamma| } \frac{d^r f}{dy^r} \sum _ { p (   \boldsymbol{\gamma} , r ) } (  \boldsymbol{\gamma} ! ) \prod _ { j = 1 } ^ {|\gamma| } \frac { \left[ D^{\boldsymbol{\ell_j} } g  \right] ^ { k _ { j } } } { \left( k _ { j } ! \right) \left(\boldsymbol{ \ell} _ { j } ! \right) ^ { k _ { j } } }\]
	where
	
	\begin{align}
p(\boldsymbol{\gamma}, r)=\{(&k_{1}, \ldots, k_{|\gamma|} ; {\boldsymbol{\ell}}_{1}, \ldots, {\boldsymbol{\ell}}_{|\gamma|}) : \text { for some } 1 \leq s \leq |\gamma|,
\\ \notag
& k_{i}=0 \text { and } {\boldsymbol{\ell}}_{i}=\mathbf{0} \text { for } 1 \leq i \leq |\gamma|-s ; k_{i}>0 \text { for } |\gamma|-s+1 \leq i \leq |\gamma|;
\\ \notag
& \text{and }\boldsymbol 0\prec {\boldsymbol{\ell}}_{n-s+1}\prec \cdots \prec  {\boldsymbol{\ell}}_{n} \text{ are such that} \\ \notag
& \qquad \qquad \quad  \sum_{i=1}^{n} k_{i}=r, \sum_{i=1}^{n} k_{i} {\boldsymbol{\ell}}_{i}=\boldsymbol{\gamma} \}.
\end{align}
 	\end{lem}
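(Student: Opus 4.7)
The plan is to prove the multivariate Fa\`a di Bruno formula by matching Taylor coefficients in a formal power series expansion, an approach that avoids the bookkeeping headaches of a raw induction on $|\boldsymbol{\gamma}|$.

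First I would fix $x\in\mathbb R^n$, set $y_0=g(x)$, and regard both sides of the identity as the coefficient of $h^{\boldsymbol{\gamma}}/\boldsymbol{\gamma}!$ in the Taylor expansion of $h\mapsto f(g(x+h))$ around $h=0$. Under the hypothesis $g\in C^{|\gamma|}(\mathbb R^n)$ and $f\in C^{|\gamma|}(\mathbb R)$, we may write (formally to order $|\boldsymbol{\gamma}|$)
\[
g(x+h)-y_0 \;=\; \sum_{1\le |\boldsymbol{\ell}|\le |\gamma|} \frac{D^{\boldsymbol{\ell}}g(x)}{\boldsymbol{\ell}!}\,h^{\boldsymbol{\ell}} + o(|h|^{|\gamma|}),\qquad
f(y_0+t)=\sum_{r=0}^{|\gamma|}\frac{f^{(r)}(y_0)}{r!}t^{r}+o(|t|^{|\gamma|}).
\]
Composing these two expansions, it is enough to extract the coefficient of $h^{\boldsymbol{\gamma}}$ in $\sum_{r\ge 0}\frac{f^{(r)}(y_0)}{r!}\bigl(g(x+h)-y_0\bigr)^r$, because the Taylor coefficients of $f\circ g$ at $x$ are precisely $D^{\boldsymbol{\gamma}}(f\circ g)(x)/\boldsymbol{\gamma}!$.

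Next I would apply the multinomial theorem to each power $(g(x+h)-y_0)^r$. Enumerating the $r$ factors by how many of them use a given multi-index derivative $\boldsymbol{\ell}_j$, one obtains
\[
\bigl(g(x+h)-y_0\bigr)^r \;=\; \sum \binom{r}{k_1,\dots,k_{|\gamma|}}\,\prod_{j=1}^{|\gamma|}\!\left(\frac{D^{\boldsymbol{\ell}_j}g(x)}{\boldsymbol{\ell}_j!}\right)^{k_j} h^{\sum_j k_j\boldsymbol{\ell}_j},
\]
where the sum ranges over nonnegative $k_j$ with $\sum k_j=r$ and distinct multi-indices $\boldsymbol{\ell}_j$. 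Restricting to those compositions with $\sum_j k_j\boldsymbol{\ell}_j=\boldsymbol{\gamma}$ selects the $h^{\boldsymbol{\gamma}}$ contribution, and the condition that $\boldsymbol{0}\prec \boldsymbol{\ell}_{|\gamma|-s+1}\prec \cdots\prec \boldsymbol{\ell}_{|\gamma|}$ in the definition of $p(\boldsymbol{\gamma},r)$ is exactly the canonical choice of representative for an unordered multiset of multi-indices; this is what prevents the overcounting that the multinomial coefficient $\binom{r}{k_1,\dots,k_{|\gamma|}}=r!/\prod k_j!$ would otherwise introduce. After multiplying by $\boldsymbol{\gamma}!$ and by $f^{(r)}(y_0)/r!$, and summing over $r$, one recovers the formula stated in the lemma.

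The main obstacle is the combinatorial identification, namely checking that the parametrization by $p(\boldsymbol{\gamma},r)$ enumerates each contribution to the coefficient of $h^{\boldsymbol{\gamma}}$ exactly once with the stated weight $\boldsymbol{\gamma}!/\bigl(\prod_j k_j!(\boldsymbol{\ell}_j!)^{k_j}\bigr)$. A clean way to handle this is to observe that an unordered ordered tuple $(\boldsymbol{\ell}_{i_1},\dots,\boldsymbol{\ell}_{i_r})$ of multi-indices summing to $\boldsymbol{\gamma}$ corresponds to a partition of the ``variable slots'' of $D^{\boldsymbol{\gamma}}$; grouping equal indices and imposing the strict order on the distinct $\boldsymbol{\ell}_j$ provides a canonical parametrization, and the factor $\boldsymbol{\gamma}!/\prod(\boldsymbol{\ell}_j!)^{k_j}$ counts the number of orderings while $1/\prod k_j!$ accounts for the permutations among equal $\boldsymbol{\ell}_j$'s. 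Once this combinatorial matching is verified, the identity follows. As a sanity check, specializing to $n=1$ recovers the classical Fa\`a di Bruno formula, and $|\boldsymbol{\gamma}|=1$ recovers the chain rule.
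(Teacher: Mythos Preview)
The paper does not prove this lemma at all: it is quoted verbatim from \cite{Constantine} and used as a black box in the proof of Lemma~\ref{derivative-lem}. So there is no ``paper's own proof'' to compare against.

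That said, your outline is a correct and standard proof of the multivariate Fa\`a di Bruno formula. The generating-function approach you describe---compose the Taylor expansions of $g$ and $f$, expand $(g(x+h)-y_0)^r$ by the multinomial theorem, and identify the coefficient of $h^{\boldsymbol{\gamma}}$---is precisely how one obtains the combinatorial weight $\boldsymbol{\gamma}!/\prod_j k_j!(\boldsymbol{\ell}_j!)^{k_j}$, and your explanation of why the strict ordering $\boldsymbol{0}\prec\boldsymbol{\ell}_{|\gamma|-s+1}\prec\cdots\prec\boldsymbol{\ell}_{|\gamma|}$ is needed (to pick a canonical representative of the multiset of multi-indices and avoid overcounting) is exactly right. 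The only cosmetic blemish is the phrase ``unordered ordered tuple''; you mean an \emph{ordered} $r$-tuple of multi-indices, which one then groups by value. With that fixed, the argument goes through, and matches in spirit the formal power series proof given in the reference \cite{Constantine} itself.
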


\subsection{Classical solutions and mild solutions}

In this subsection, we give the definitions of classical and mild solutions for \eqref{fde}.
 \begin{defn}[Classical solution]\label{classical-solution}
  Suppose $u_0, u_1 \in C(\mathbb R^n)$ and $f\in C([0,\infty)\times \mathbb R^n)$. Then a function $u(t,x)\in C([0,\infty)\times \mathbb R^n) $ is called a classical solution of the Cauchy problem \eqref{fde} if
\begin{enumerate}
	\item $\Delta u(t,x)$ defines a continuous function of $x$ for every $t>0$,
	\item for every $x\in \mathbb R^n$, the fractional integral $I_t^{2-\alpha}u'$, where $u'=\frac{\partial u}{\partial t}$,
	\[I_t^{2-\alpha}u'(t,x)=\frac{1}{\Gamma(2-\alpha)}\int_0^t (t-\tau)^{1-\alpha}\frac{\partial u}{\partial t} (\tau, x) \dd{\tau},\]
		is continuously differentiable with respect to $t>0$, and
	\item $u(t,x)$ satisfies the equation \eqref{fde} for every $(t,x)\in (0, \infty)\times \mathbb R^n$ and the initial conditions in \eqref{fde} for every $x\in \mathbb R^n$.
\end{enumerate}

\end{defn}

Recall that $E_{\alpha, \beta}(z)$ is the Mittag-Leffler function which is given in Definition \ref{definition-of-ml}.
Define
\begin{align*}
  \widehat{S}_{\alpha}(t,\xi)&=E_{\alpha, 1}(i^{-\alpha}t^\alpha |\xi|^2),\\
 	\widehat{Q}_{\alpha}(t,\xi)&=t E_{\alpha, 2}(i^{-\alpha}t^\alpha |\xi|^2),\\
 	\widehat{P}_{\alpha}(t,\xi)&= i^{-\alpha}t^{\alpha-1}E_{\alpha, \alpha}(i^{-\alpha}t^\alpha |\xi|^2),
 \end{align*}
and $S_{\alpha}(t,x), Q_{\alpha}(t,x), P_{\alpha}(t,x)$ be the inverse Fourier transform of $\widehat{S}_{\alpha}(t,\xi), \widehat{Q}_{\alpha}(t,\xi),\widehat{P}_{\alpha}(t,\xi)$ respectively.
 We write $S_\alpha(x) = S_\alpha(1,x)$, $Q_\alpha(x) = Q_\alpha(1,x)$, and $P_\alpha(x) = P_\alpha(1,x)$for short.

When the initial data is sufficiently regular, we can use the Laplace and Fourier transforms to write the solution as the sum of three terms,
 \begin{align}\label{mildsolution}
 	u(t,x)&=\int_{\mathbb R^n}S_{\alpha}(t,x-y)u_0(y)\dd{y}+\int_{\mathbb R^n}S_{\alpha}(t,x-y)u_1(y)\dd{y}+\int_{0}^{t}\int_{\mathbb R^n}P_\alpha(t-\tau, x-y )f(\tau, y)\dd{y}\dd{\tau} \nonumber\\
 	&=\mathcal F^{-1}(\widehat{S}_{\alpha}(t,\xi) \widehat u_0(\xi))+\mathcal F^{-1}(\widehat{Q}_{\alpha}(t,\xi) \widehat u_1(\xi))+
 	\mathcal F^{-1}\left(\int_0^t\widehat{P}_{\alpha}(t-\tau,\xi))\widehat f(\tau,\xi)\dd{\tau}\right)\nonumber\\
 	&=:u_0(t,x)+u_1(t,x)+u_f(t,x).
 \end{align}

  \begin{defn} [Mild solution] Let $u_0, u_1$ and $f$ be measurable functions on $\mathbb R^n$ and $[0,\infty)\times \mathbb R^n$, respectively. Then the function $u$ defined by
\eqref{mildsolution} is called the mild solution of the Cauchy problem \eqref{fde} whenever the integrals in \eqref{mildsolution} are well defined.
\end{defn}

 \begin{rem}
 If $u_0, u_1 \in \mathcal S$ and $f\in \mathcal S$ for each $t>0$, then the mild solution is a classical solution.	 \end{rem}
 Using \eqref{asymptotic-of-ml-infity}, as $t|\xi|^{\frac{2}{\alpha}}\to \infty$, we have the following results:
 \begin{align}
 \widehat{S}_{\alpha}(t,\xi)&=\frac{1}{\alpha}e^{-it|\xi|^{\frac{2}{\alpha}}}-\sum_{k=1}^m\frac{i^{\alpha k}t^{-\alpha k}}{\Gamma(
 1-\alpha k )|\xi|^{2k}}+o\left(\frac{t^{-\alpha m}}{|\xi|^{2m}}\right),\\
\widehat{Q}_{\alpha}(t,\xi) &=\frac{1}{\alpha} i|\xi|^{-\frac{2}{\alpha}}e^{-it|\xi|^{\frac{2}{\alpha}}}-\sum_{k=1}^m\frac{i^{\alpha k}t^{1-\alpha k}}{\Gamma(2-\alpha k )|\xi|^{2k}}+o\left(\frac{t^{1-\alpha m}}{|\xi|^{2m}}\right), \  \text{and}\\
	\widehat{P}_{\alpha}(t,\xi)&=\frac{1}{\alpha} i^{-1}|\xi|^{-\frac{2(\alpha-1)}{\alpha}}e^{-it|\xi|^{\frac{2}{\alpha}}}-\sum_{k=2}^m\frac{i^{\alpha k-\alpha}t^{\alpha-1-\alpha k}}{\Gamma(\alpha-\alpha k )|\xi|^{2k}}+o\left(\frac{t^{\alpha-1-\alpha m}}{|\xi|^{2m}}\right).
\end{align} It is obvious that $\widehat{S}_{\alpha}(t,\xi), \widehat{Q}_{\alpha}(t,\xi),
\widehat{P}_{\alpha}(t,\xi) $ are oscillating as $|\xi| \to \infty$ for any fixed $t>0$.

  \begin{rem} It is easy to check that $\widehat{S}_{\alpha}(t,\xi) =D_t^1 \widehat{Q}_{\alpha}(t,\xi)$ and $\widehat{P}_{\alpha}(t,\xi)=D_t^{2-\alpha} \widehat{Q}_{\alpha}(t,\xi)$.

 \end{rem}
 \section{Asymptotic properties of the kernels}
In this section, we give the asymptotics for the oscillatory kernels and its Bessel potentials for $t=1$. In addition, we derive some upper bounds for the the Bessel potentials of the kernels for any $t>0$.

First, we consider the case when $t=1$. Recall that, for any $\theta\geq 0$,
\begin{align*}
 J^\theta S_\alpha(x)&=\frac{1}{(2\pi)^{n/2}}\int_{\mathbb R^n} e^{ ix\cdot\xi}(1+|\xi|^2)^{-\theta/2} E_{\alpha,1} (i^{-\alpha } |\xi|^2)\dd {\xi},\\
    J^\theta Q_\alpha(x)&=\frac{1}{(2\pi)^{n/2}}\int_{\mathbb R^n} e^{ ix\cdot\xi}(1+|\xi|^2)^{-\theta/2} E_{\alpha,2} (i^{-\alpha } |\xi|^2)\dd {\xi}, \ \text{and}\\
  J^\theta P_\alpha(x)&=\frac{1}{(2\pi)^{n/2}}\int_{\mathbb R^n} e^{ ix\cdot\xi} (1+|\xi|^2)^{-\theta/2}E_{\alpha,\alpha} (i^{-\alpha } |\xi|^2)\dd {\xi}.	
\end{align*}
Then we have the following lemma.
\begin{lem} \label{asymptotic-behavior-of-kernel}
For any $\theta\geq 0$.  Then $J^\theta S_\alpha(x), J^\theta Q_\alpha(x)$ and $ J^\theta P_\alpha(x)$ are all smooth functions throughout $\mathbb R^n \backslash \{0\}$ with the following asymptotic behavior as $|x|\to\infty $,
\begin{align}
J^\theta S_\alpha(x)
&
= A |x|^{\frac{n\alpha-n-\theta \alpha}{2-\alpha}}e^{-i B |x|^\frac{2}{\alpha-2}}+o(|x|^{\frac{n\alpha-n-\theta \alpha}{2-\alpha}}),
\\
J^\theta Q_\alpha (x)
&
= A |x|^{\frac{n\alpha-n-2-\alpha \theta}{2-\alpha}}e^{-i B |x|^\frac{2}{\alpha-2}}+o(|x|^{\frac{n\alpha-n-2-\alpha \theta}{2-\alpha}}),\ \text{and}
\\
J^\theta P_\alpha (x)
&
= A |x|^{\frac{n\alpha+2-n-2\alpha-\alpha \theta}{2-\alpha}}e^{-i B |x|^\frac{2}{\alpha-2}}+o(|x|^{\frac{n\alpha+2-n-2\alpha-\alpha \theta}{2-\alpha}}) ,
\end{align}
where $A$ and $B$ are some constants (which may differ from line to line) which only depend on $\alpha, \theta$ and $n$.
They have the following asymptotic behavior as $|x|\to 0$,
\begin{align}
|J^\theta S_\alpha(x)|, |J^\theta Q_\alpha(x)|&\sim
\begin{cases}C, \quad &\text{when}\quad \theta>n-2,\\
C \log\frac{1}{|x|}, \quad &\text{when} \quad \theta=n-2,\\
	C |x|^{2+\theta-n}+o(|x|^{2+\theta-n}), \quad &\text{when}\quad \theta<n-2,
\end{cases}
\end{align}
and \begin{align}
|J^\theta P_\alpha(x)|&\sim
\begin{cases}C, \quad &\text{when}\quad \theta>n-4,\\
C \log\frac{1}{|x|}, \quad &\text{when} \quad \theta=n-4,\\
	C |x|^{4+\theta-n}+o(|x|^{4+\theta-n}), \quad &\text{when}\quad \theta<n-4.
\end{cases}
\end{align}

In particular, define the constants
 \begin{align}\label{index number}
 \theta_{S_\alpha}=\frac{n}{\alpha},\  \theta_{Q_\alpha}=\frac{n}{\alpha}-\frac{2}{\alpha} \ \text{and}\ \theta_{P_{\alpha}}=\frac{n}{\alpha}+\frac{2}{\alpha}-2.	
 \end{align}
 Then
\begin{enumerate}
	\item if $\theta>\theta_{S_{\alpha}}$, we have $J^{\theta}S_{\alpha}(x)\in L^1(\mathbb R^n)$;
	\item if $\theta>\theta_{Q_{\alpha}}$, we have $J^{\theta}Q_{\alpha}(x)\in L^1(\mathbb R^n)$; and
    \item if $\theta>\theta_{P_{\alpha}}$, we have $J^{\theta}P_{\alpha}(x)\in L^1(\mathbb R^n)$.
\end{enumerate}

\end{lem}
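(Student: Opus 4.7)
The plan is to handle all three kernels uniformly, since they are inverse Fourier transforms of $(1+|\xi|^2)^{-\theta/2}E_{\alpha,\beta}(i^{-\alpha}|\xi|^2)$ with $\beta\in\{1,2,\alpha\}$. The key idea is to insert the Mittag--Leffler asymptotic \eqref{asymptotic-of-ml-infity} in the high-frequency region, so as to isolate a single leading oscillatory multiplier of the type handled by Miyachi, plus a finite sum of power-type corrections that will behave like Bessel potential kernels, plus a harmless smooth remainder. Concretely, I would fix a smooth cutoff $\phi$ that vanishes near the origin and equals $1$ for $|\xi|\geq 2$, and split the symbol into a low-frequency piece $(1-\phi(\xi))(\cdots)$ and a high-frequency piece $\phi(\xi)(\cdots)$. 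The low-frequency piece is smooth and compactly supported in $\xi$, so its inverse Fourier transform is Schwartz and irrelevant for both the $|x|\to\infty$ and the singular $|x|\to 0$ asymptotics. On the high-frequency piece, \eqref{asymptotic-of-ml-infity} writes $E_{\alpha,\beta}(i^{-\alpha}|\xi|^2)$ as a constant times $|\xi|^{2(1-\beta)/\alpha}e^{-i|\xi|^{2/\alpha}}$, plus a finite sum of terms $c_k|\xi|^{-2k}$, plus a remainder $O(|\xi|^{-2(m+1)})$; note that when $\beta=\alpha$ the $k=1$ term vanishes because $1/\Gamma(0)=0$, which is exactly what shifts the singular threshold for $P_\alpha$ from $n-2$ to $n-4$.

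For the leading oscillatory piece I would absorb $|\xi|^{2(1-\beta)/\alpha}$ into an effective Bessel weight of exponent $b^{\ast}=\theta-2(1-\beta)/\alpha$. This is exactly the multiplier $\tilde m_{2/\alpha,b^{\ast}}^{-}(\xi)$ of Corollary \ref{equivalence of Riesz and Potential}, so the corresponding kernel is smooth on all of $\mathbb{R}^n$ and the corollary supplies the asymptotic at $|x|\to\infty$. A direct algebraic simplification of Miyachi's power $(b^{\ast}-n+n/\alpha)/(1-2/\alpha)$ collapses to $\frac{n\alpha-n-\alpha\theta}{2-\alpha}$, $\frac{n\alpha-n-2-\alpha\theta}{2-\alpha}$, and $\frac{n\alpha+2-n-2\alpha-\alpha\theta}{2-\alpha}$ for $\beta=1,2,\alpha$ respectively, with the claimed oscillation $e^{-iB|x|^{2/(\alpha-2)}}$. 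Each power correction $\phi(\xi)(1+|\xi|^2)^{-\theta/2}|\xi|^{-2k}$ differs from $(1+|\xi|^2)^{-(\theta+2k)/2}$ by a smooth compactly supported function in $\xi$, hence its inverse Fourier transform equals the Bessel kernel $G^{\theta+2k}$ modulo a Schwartz function; such a contribution decays faster than any power at infinity and has precisely the singular behavior at $0$ recorded after \eqref{bessel-function}. Combining these pieces, at $|x|\to\infty$ the oscillatory term dominates, while at $|x|\to 0$ the oscillatory term is smooth and the worst surviving power correction ($k=1$ for $S_\alpha$ and $Q_\alpha$, $k=2$ for $P_\alpha$) produces the stated trichotomy in $\theta$.

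Finally the $L^1$ claims will follow by integrating the amplitude. Near the origin the worst singularity is always locally integrable in $\mathbb{R}^n$, so the constraint comes from infinity, where the amplitude decays like $|x|^{e}$ for one of the exponents $e$ above; the inequality $e<-n$ simplifies by elementary algebra to $\theta>n/\alpha=\theta_{S_\alpha}$, and the analogous computations produce $\theta_{Q_\alpha}$ and $\theta_{P_\alpha}$. The main obstacle is bookkeeping: tracking the different exponents coming from $\beta\in\{1,2,\alpha\}$ and the Bessel weight through Miyachi's formula, together with the slightly delicate case $\beta=\alpha$ where the first power correction vanishes and shifts the small-$|x|$ threshold. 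Apart from this, the proof reduces to direct applications of the multiplier and Bessel-potential machinery recorded in the preliminaries.
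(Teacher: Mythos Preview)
Your plan is essentially the paper's proof: split off a low-frequency piece by a smooth cutoff (Schwartz, hence irrelevant), on the high-frequency piece insert the Mittag--Leffler asymptotic \eqref{asymptotic-of-ml-infity}, feed the leading oscillatory term into Corollary~\ref{equivalence of Riesz and Potential} to get the $|x|\to\infty$ expansion, and read off the $|x|\to 0$ singularity from the power-type tail. The paper keeps the entire remainder $E_{\alpha,\beta}(i^{-\alpha}|\xi|^2)-\tfrac1\alpha(i^{-1}|\xi|^{2/\alpha})^{1-\beta}e^{-i|\xi|^{2/\alpha}}$ as one piece and quotes an external lemma for its small-$|x|$ expansion, whereas you resolve it term-by-term into Bessel kernels $G^{\theta+2k}$; the observation that $1/\Gamma(0)=0$ kills the $k=1$ term for $\beta=\alpha$ and shifts the $P_\alpha$ threshold to $n-4$ is exactly the mechanism at work. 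The $L^1$ conclusions and the exponent algebra are handled identically.

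Two technical points to tighten. First, your claim that $\phi(\xi)(1+|\xi|^2)^{-\theta/2}|\xi|^{-2k}$ and $(1+|\xi|^2)^{-(\theta+2k)/2}$ differ by a smooth \emph{compactly supported} function is false as written: for large $|\xi|$ the difference is only $O(|\xi|^{-\theta-2k-2})$, not zero. The fix is immediate---either absorb that discrepancy into the next power correction and the tail, or use the finite-measure argument from the proof of Corollary~\ref{equivalence of Riesz and Potential} (the ratio $\phi(\xi)|\xi|^{-2k}(1+|\xi|^2)^{k}$ is $1$ plus a symbol of negative order away from the origin). Second, calling the $O(|\xi|^{-2(m+1)})$ tail ``harmless'' needs one more sentence: a bare $O$-bound on the symbol gives only boundedness of the inverse transform, not smoothness on $\mathbb{R}^n\setminus\{0\}$ or rapid decay at infinity. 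You need that the Mittag--Leffler asymptotic can be differentiated (it can), so the tail obeys symbol estimates and integration by parts yields $|x|^{-N}$ decay---this is what the paper does for its $S_{\alpha,\theta,22}$.
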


\begin{proof} Since the proofs for the three kernels are similar, we only give the details for $S_\alpha(x)$.

	Denote by $\Phi(\xi)$ a fixed smooth function that is supported in the unit ball centered at the origin. Recall that $\widehat{J^\theta S}_\alpha(\xi)=(1+|\xi|^2)^{-\theta/2}E_{\alpha,1}(i^{-\alpha }|\xi|^2)$ is locally integrable, thus it belongs to $\mathcal S'(\mathbb R^n)$. Then we can write $J^\theta S_\alpha(x)$ as follows (interpreted in the sense of distributions),
	\begin{align*}
		J^\theta S_\alpha(x)=&\frac{1}{(2\pi)^{n/2}}\int_{\mathbb R^n} e^{ ix\cdot\xi}(1+|\xi|^2)^{-\theta/2} E_{\alpha,1} (i^{-\alpha } |\xi|^2)\dd {\xi}\\
		=&\frac{1}{(2\pi)^{n/2}}\int_{\mathbb R^n} e^{ ix\cdot\xi}(1+|\xi|^2)^{-\theta/2} E_{\alpha,1} (i^{-\alpha } |\xi|^2)\Phi(\xi)\dd {\xi}\\
		&+\frac{1}{(2\pi)^{n/2}}\int_{\mathbb R^n} e^{ ix\cdot\xi}(1+|\xi|^2)^{-\theta/2} E_{\alpha,1} (i^{-\alpha } |\xi|^2)(1-\Phi(\xi))\dd {\xi}\\
		=: &S_{\alpha, \theta,1}(x)+S_{\alpha, \theta, 2}(x).
	\end{align*}	
	Since $(1+|\xi|^2)^{-\theta/2} E_{\alpha,1} (i^{-\alpha } |\xi|^2)\Phi(\xi) \in C_c^{\infty} (\mathbb R^n)$, we have that $S_{\alpha,\theta,1}(x)$ is a Schwartz function. In particular, for any integer $N\geq 1$,
	\begin{align}\label{asymptotic of part 1}
		|S_{\alpha,\theta,1}(x)| \lesssim
		\begin{cases}
			1 &\quad \text {as} \quad |x|\to 0,\\
			 |x|^{-N}, &\quad \text {as} \quad |x| \to \infty.
		\end{cases}
	\end{align}
	 For the second term, we rewrite it as
	 \begin{align*}
	 	S_{\alpha,\theta, 2}(x)&=\frac{1}{(2\pi)^{n/2}}\int_{\mathbb R^n} e^{ ix\cdot\xi} (1+|\xi|^2)^{-\theta/2} E_{\alpha,1} (i^{-\alpha } |\xi|^2)(1-\Phi(\xi))\dd{\xi}\\
	 		 &=\frac{1}{(2\pi)^{n/2}}\int_{\mathbb R^n} e^{ix\cdot\xi}(1+|\xi|^2)^{-\theta/2}\frac{1}{\alpha} e^{-i|\xi|^{\frac{2}{\alpha}}}(1-\Phi(\xi))\dd{\xi}\\
	 	&+\frac{1}{(2\pi)^{n/2}}\int_{\mathbb R^n} e^{ ix\cdot\xi}(1+|\xi|^2)^{-\theta/2} (E_{\alpha,1} (i^{-\alpha } |\xi|^2)-\frac{1}{\alpha}e^{-i|\xi|^{\frac{2}{\alpha}}})(1-\Phi(\xi))\dd{\xi}\\
	 	&=:S_{\alpha,\theta, 21}(x)+S_{\alpha,\theta,22}(x).
	 \end{align*}
	 By Corollary \ref{equivalence of Riesz and Potential}, $S_{\alpha,\theta, 2}(x)$ is a smooth function throughout $\mathbb R^n$,
	 and
	 \begin{align}\label{asymptotic of part 2}
		|S_{\alpha,\theta,21}(x)|\sim
		\begin{cases}
			C, &\quad \text {as} \quad |x|\to 0,\\
			A |x|^{\frac{n\alpha-n-\theta\alpha}{2-\alpha}}e^{-i B |x|^\frac{2}{\alpha-2}}+o(|x|^{\frac{n\alpha-n-\theta \alpha}{2-\alpha}}), &\quad \text {as} \quad |x|\to \infty.
		\end{cases}
	\end{align}

 Now we deal with $S_{\alpha,\theta,22}(x)$. Define $r_{\alpha,\theta}(\xi)$ as the following term which appears in the integrand of $S_{\alpha,\theta,22}(x)$,
	\begin{align*}	
	r_{\alpha,\theta}(\xi):=(1+|\xi|^2)^{-\theta/2}(E_{\alpha,1} (i^{-\alpha } |\xi|^2)-\frac{1}{\alpha}e^{-i|\xi|^{\frac{2}{\alpha}}})(1-\Phi(\xi)). \end{align*}
 Employing the asymptotic formula \eqref{asymptotic-of-ml-infity}, we have for any integer $M\geq 1$,
\begin{align*}
	 r_{\alpha,\theta}(\xi)=\left(\sum_{k=1}^M\frac{C_k}{|\xi|^{2k +\theta }}+o\left(\frac{1}{|\xi|^{2k+\theta}}\right)\right)(1-\Phi(\xi))).
\end{align*}

 Using integration by parts, it is easy to see for any integer $N\geq 1$
 \begin{align}\label{asymptotic of part 3}
 	|S_{\alpha,\theta,22}(x)| \lesssim |x|^{-N}, \quad \text {as} \quad x\to \infty.
 \end{align}
It remains to consider the case when $|x|\to 0$. Using similar estimates for Lemma 3.1 in \cite{Su}, we have the following expansion for $S_{\alpha,\theta,22}(x)$,
	\begin{enumerate}
	\item  If $n>\theta+2$, for some positive integer $M$,
		\[S_{\alpha,\theta,22}(x)=\sum_{k=1}^{M}C_k|x|^{-n+2 k+\theta}+W({x}), \quad \text {as}\quad |x|\to 0,\]
	where $W \in L^{\infty}(\mathbb R^n)$.
	\item  If $n=\theta+2$, for some positive integer $M$,
	\[S_{\alpha,\theta,22}(x)=\sum_{k=1}^{M}C_k|x|^{-n+2 k+\theta}+C W_1(x)+W({x}),\]
	where $W \in L^{\infty}(\mathbb R^n)$, and $W_1(x)\sim \ln|x|$, as $|x|\rightarrow 0$.
	\item If $n<\theta+2$, then $r_{\alpha, \theta}(\xi)$ is integrable. Thus $S_{\alpha,\theta,22}(x)$ is bounded as $|x|\rightarrow 0$.
	\end{enumerate}
	
	In particular, we have
	\begin{align} \label{asymptotic of part 3'}
	| S_{\alpha,\theta,22}(x)|\sim\begin{cases}
	C, \quad &\text{when} \quad n< 2+\theta,\\
	\log \frac{1}{|x|}, \quad &\text{when} \quad n=2+\theta,\\
		|x|^{2+\theta-n},\quad &\text{when} \quad n> 2+\theta,\\
		\end{cases} \quad\text{as} \quad |x|\to 0. \end{align}

Combining \eqref{asymptotic of part 1}, \eqref{asymptotic of part 2}, \eqref{asymptotic of part 3} and \eqref{asymptotic of part 3'}, gives the asymptotic behavior for $J^\theta S_{\alpha}(x)$. Notice that, when $\theta>\frac{n}{\alpha}$, we have $\frac{n\alpha-n-\theta\alpha}{2-\alpha}<-n$,  which implies $J^\theta S_\alpha\in L^1(\mathbb R^n)$.

As for $Q_{\alpha}(x)$ and $P_{\alpha}(x)$, we can derive the desired results using the same method.
\end{proof}

Now we compare the properties of our kernels with some known results. Here we take $\theta=0$ and $S_\alpha(x)$ to illustrate our ideas.
 From Lemma $\ref{asymptotic-behavior-of-kernel}$,
$S_\alpha(x)$ has the following asymptotic behavior as $|x|\to\infty $,
\begin{align}\label{asy at infty}
 S_\alpha(x)
= A |x|^{\frac{n\alpha-n}{2-\alpha}}e^{-i B |x|^\frac{2}{\alpha-2}}+o(|x|^{\frac{n\alpha-n}{2-\alpha}}),
\end{align}
and have the following asymptotic behavior as $|x|\to 0$,
\begin{align}\label{asy at 0}
|S_\alpha(x)| &\sim
\begin{cases}C, \quad &\text{when}\quad n<2,\\
C \log\frac{1}{|x|}, \quad &\text{when} \quad n=2,\\
	C |x|^{2-n}+o(|x|^{2-n}), \quad &\text{when}\quad
	n>2.
\end{cases}
\end{align}
Let us denote $S^{\text{h}}_\alpha(x)=\mathcal F^{-1}(E_{\alpha,1}(-|\xi|^2))$, which is the fundamental solution for the fractional diffusion-wave equation \eqref{dwe}.
According to the Theorem 2.1 and Theorem 2.2 in \cite{Kim2}, $S^{\text{h}}_\alpha(x)$ has the following asymptotic behavior as $|x|\to\infty $,
\begin{align}\label{heat kernel x big}
| S^{\text{h}}_\alpha(x)|\lesssim |x|^{-n}e^{-c|x|^{\frac{2}{2-\alpha}}},
\end{align}
where $c>0$ is a constant, and has the following asymptotic behavior as $|x|\to 0$,
\begin{align}\label{heat kernel x small}
|S^{\text{h}}_\alpha(x)| &\sim
\begin{cases}C, \quad &\text{when}\quad n<2,\\
C \log\frac{1}{|x|}, \quad &\text{when} \quad n=2,\\
	C |x|^{2-n}+o(|x|^{2-n}), \quad &\text{when}\quad
	n>2.
\end{cases}
\end{align}

\begin{rem} Suppose that $1<\alpha<2$. As $|x| \to 0$, according to \eqref{asy at 0} and \eqref{heat kernel x small},
$S^{\text{h}}_\alpha(x)$ and $S_\alpha(x)$ have the same behavior and they are both in $L^1_{loc}(\mathbb R^n)$.
As $|x|\to \infty$, due to \eqref{heat kernel x big}, $S^{\text{h}}_\alpha(x)$ has exponential decay. In contrast, by \eqref{asy at infty}, the main term for $S_\alpha(x)$ is an increasing polynomial multiplied by an oscillatory term, which causes the solution for \eqref{fde} to lose regularity.
 \end{rem}

 \begin{rem} We can also compare $S_{\alpha}(x)$ with $K^{-}_{0, \frac{2}{\alpha}}(x)$.
 It is easy to see that they have the same asymptotic behavior as $|x|\to \infty$; However, they behaves differently as $|x|\to 0$, since the former one may has singularity at $|x|=0$ when the dimension is large. This difference is caused by the reminder term $r(\xi)$ of $E_{\alpha,1}(i^{\alpha}|\xi|^2)$ as $|\xi|\to \infty$.
\end{rem}

In the next lemma, we give some upper bounds for the Bessel potentials of the kernels for any arbitrary $t>0$. Since $(1+|\xi|^2)^{-\theta/2}$ is not homogeneous, the following results cannot be obtained by scaling.
Recall that $\theta_{S_\alpha},\  \theta_{Q_\alpha} \ \text{and}\ \theta_{P_{\alpha}}$ are constants defined in \eqref{index number}.

\begin{lem}\label{Bessel potential estimate}
 For the Bessel potentials of the kernels $S_{\alpha}(t,x), Q_{\alpha}(t,x)$ and $P_{\alpha}(t,x)$ with arbitrary $t>0$, we have the following upper bounds:

\begin{enumerate}
	\item \begin{enumerate}
	\item
	If $\theta>\theta_{S_\alpha}$ and $0<t<1$, then
	\begin{align}\label{estimate for S for small t}
		|J^{\theta}S_{\alpha}(t,x)|\leq \begin{cases}  C\max\{1,|x|^{\theta-n}\}, & |x| \leq t^{\frac{\alpha}{2}},\\
	C\max\{1, t^{{\frac{\alpha}{2}(\theta-n)}}\}, &   t^{\frac{\alpha}{2}}\leq |x| \leq 1,
	\\C |x|^{-n-\sigma}, & |x|\geq 1,
\end{cases}
	\end{align}
where $\sigma=\frac{\theta \alpha-n}{2-\alpha} >0$. In particular,
$\int_{R^n\setminus B(0,1)} |J^{\theta}S_{\alpha}(t,x)|\dd{x}$ is bounded uniformly in $0<t< 1$, where $B(0,1)$ is the unit ball of $\mathbb R^n$ centered at the origin.
\item If $\theta>\theta_{S_\alpha}$ and $t\geq 1$, then
	\begin{align} \label{estimate for S for big t}
		|J^{\theta}S_{\alpha}(t,x)|\leq \begin{cases}  C\max\{1,|x|^{\theta-n}\}, & |x| \leq t^{\frac{\alpha}{2}},\\
	C t^{\frac{\alpha}{2}(\theta+\sigma)} |x|^{-n-\sigma}, & |x|\geq t^{\frac{\alpha}{2}},
\end{cases}
	\end{align}
	where $\sigma =\frac{\theta \alpha-n}{2-\alpha} >0$.
\end{enumerate}
\item \begin{enumerate}
\item For $\theta>\theta_{Q_\alpha}$ and $0<t<1$, then
\begin{align}
|J^{\theta}Q_{\alpha}(t,x)|\leq \begin{cases}  C\max\{1,|x|^{\theta-n+\frac{2}{\alpha}}\}, & |x| \leq t^{\frac{\alpha}{2}},\\
	C\max\{1, t^{{\frac{\alpha}{2}(\theta-n+\frac{2}{\alpha} )}}\}, &   t^{\frac{\alpha}{2}}\leq |x| \leq 1,\\
	C|x|^{-n-\sigma}, & |x|\geq 1,
\end{cases}\end{align}
where $\sigma=\frac{\theta \alpha+2-n}{2-\alpha} >0$. In particular, $\int_{R^n\setminus B(0,1)} |J^{\theta}Q_{\alpha}(t,x)|\dd{x}$ is bounded uniformly in $0<t< 1$.
\item For $\theta>\theta_{Q_\alpha}$ and $t\geq 1$, then
\begin{align}|J^{\theta}Q_{\alpha}(t,x)|\leq \begin{cases}  C\max\{1,|x|^{\theta-n+\frac{2}{\alpha}}\}, & |x| \leq t^{\frac{\alpha}{2}},
	\\Ct^{\frac{\alpha}{2}(\theta+\sigma)}|x|^{-n-\sigma}, & |x|\geq t^{\frac{\alpha}{2}},
\end{cases}\end{align}
where $\sigma=\frac{\theta \alpha+2-n}{2-\alpha} >0$.
\end{enumerate}
\item
 \begin{enumerate}
\item
 For $\theta>\theta_{P_\alpha}$ and $0<t<1$, then
\begin{align}
|J^{\theta}P_{\alpha}(t,x)|\leq \begin{cases}  C\max\{1,|x|^{\theta-n+2-\frac{2}{\alpha}}\}, & |x| \leq t^{\frac{\alpha}{2}},\\
	C\max\{1, t^{{\frac{\alpha}{2}(\theta-n+2-\frac{2}{\alpha})}}\}, &   t^{\frac{\alpha}{2}}\leq |x| \leq 1,
	\\C |x|^{-n-\sigma}, & |x|\geq 1,
\end{cases}\end{align}
where $\sigma=\frac{\theta \alpha+2\alpha-2-n}{2-\alpha} >0$. In particular, $\int_{R^n\setminus B(0,1)} |J^{\theta} P_{\alpha}(t,x)|\dd{x}$ is bounded uniformly in $0<t< 1$.
\item  For $\theta>\theta_{P_\alpha}$ and $t\geq 1$, then
\begin{align} |J^{\theta}P_{\alpha}(t,x)|\leq \begin{cases}  C\max\{1,|x|^{\theta-n+2-\frac{2}{\alpha}}\}, & |x| \leq t^{\frac{\alpha}{2}},\\
C t^{\frac{\alpha}{2}(\theta+\sigma)}|x|^{-n-\sigma}, & |x|\geq t^{\frac{\alpha}{2}},
\end{cases}\end{align}
where $\sigma=\frac{\theta \alpha+2\alpha-2-n}{2-\alpha} >0$.
\end{enumerate}
\end{enumerate}
\end{lem}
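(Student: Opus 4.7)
The plan is to adapt the proof of Lemma \ref{asymptotic-behavior-of-kernel} (which handles the special case $t=1$) to arbitrary $t>0$, while carefully tracking the $t$-dependence. The main subtlety is that the inhomogeneous Bessel weight $(1+|\xi|^2)^{-\theta/2}$ does not scale under $\xi\mapsto t^{-\alpha/2}\xi$, so a simple rescaling from the $t=1$ case is not available; this non-scaling is precisely what produces the extra intermediate regime $t^{\alpha/2}\le|x|\le 1$ for $0<t<1$. I sketch the argument for $J^{\theta}S_{\alpha}$; the estimates for $J^{\theta}Q_{\alpha}$ and $J^{\theta}P_{\alpha}$ are obtained by the same method, absorbing the extra factors $|\xi|^{-2/\alpha}$ or $|\xi|^{-2(\alpha-1)/\alpha}$ (from the leading terms of \eqref{asymptotic-of-ml-infity} with $\beta=2$ or $\beta=\alpha$) into the Bessel weight, which shifts the threshold from $\theta_{S_\alpha}$ to $\theta_{Q_\alpha}$ or $\theta_{P_\alpha}$ and correspondingly modifies $\sigma$.

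Fix a smooth radial cutoff $\Phi$ supported in $\{|\xi|\le 2\}$ with $\Phi\equiv 1$ on $\{|\xi|\le 1\}$, and decompose $J^{\theta}S_{\alpha}(t,x)=L(t,x)+H(t,x)$, where $L$ has the localisation $\Phi(t^{\alpha/2}\xi)$ in the multiplier and $H$ its complement. The cut-off point $|\xi|\sim t^{-\alpha/2}$ is chosen so that on the support of $L$ the argument $t^\alpha|\xi|^2$ of the Mittag--Leffler function is bounded, while on the support of $H$ it is at least $1$. On the support of $L$ the series expansion \eqref{asymptotic-of-ml-zero} together with Fa\`a di Bruno's formula (Lemma \ref{faa di bruno}) provides uniform control of all $\xi$-derivatives of $E_{\alpha,1}(i^{-\alpha}t^\alpha|\xi|^2)$, and a direct estimate (putting absolute values inside and splitting the $\xi$-integral at $|\xi|=1$) yields a bound of size $\max(1,t^{\alpha(\theta-n)/2})$ that accounts for the intermediate regime $t^{\alpha/2}\le|x|\le 1$ in (1)(a). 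On the support of $H$, \eqref{asymptotic-of-ml-infity} gives
\[
E_{\alpha,1}(i^{-\alpha}t^\alpha|\xi|^2)=\tfrac{1}{\alpha}e^{-it|\xi|^{2/\alpha}}+r_M(t,\xi),\qquad |\partial^\gamma r_M(t,\xi)|\lesssim t^{-\alpha M}|\xi|^{-2M-|\gamma|},
\]
and for $M$ large the remainder $r_M$ is absolutely integrable against $(1+|\xi|^2)^{-\theta/2}$, with pointwise $|x|$-decay obtained by integration by parts in $\xi$ exactly as for $S_{\alpha,\theta,22}$ in the proof of Lemma \ref{asymptotic-behavior-of-kernel}.

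The main contribution is the oscillatory term
\[
H_{\mathrm{main}}(t,x)=\frac{1}{\alpha}\int(1-\Phi(t^{\alpha/2}\xi))(1+|\xi|^2)^{-\theta/2}e^{-it|\xi|^{2/\alpha}}e^{ix\cdot\xi}\,d\xi,
\]
which after the change of variables $\eta=t^{\alpha/2}\xi$ and $y=t^{-\alpha/2}x$ becomes
\[
H_{\mathrm{main}}(t,x)=\frac{t^{-\alpha n/2}}{\alpha}\int(1-\Phi(\eta))(1+t^{-\alpha}|\eta|^2)^{-\theta/2}e^{-i|\eta|^{2/\alpha}}e^{iy\cdot\eta}\,d\eta.
\]
The weight $(1+t^{-\alpha}|\eta|^2)^{-\theta/2}$ is comparable to $1$ when $|\eta|\le t^{\alpha/2}$ and to $t^{\alpha\theta/2}|\eta|^{-\theta}$ when $|\eta|\gtrsim t^{\alpha/2}$, so on the support $\{|\eta|\ge 1\}$ of $1-\Phi$ it reduces to the homogeneous form for $0<t<1$ and requires an extra split at $|\eta|\sim t^{\alpha/2}$ for $t\ge 1$. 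In either case the integral is controlled (up to the overall factor $t^{\alpha\theta/2}$) by the Miyachi multiplier $\tilde m^{-}_{2/\alpha,\theta}$ of Corollary \ref{equivalence of Riesz and Potential}; the asymptotic $|\tilde K^{-}_{2/\alpha,\theta}(y)|\lesssim |y|^{-(n+\sigma)}$ as $|y|\to\infty$ together with $y=t^{-\alpha/2}x$ produces $|H_{\mathrm{main}}(t,x)|\lesssim t^{\alpha(\theta+\sigma)/2}|x|^{-(n+\sigma)}$, which for $0<t<1$ is bounded by $|x|^{-(n+\sigma)}$ and for $t\ge 1$ retains the $t$-prefactor. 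For $|y|\lesssim 1$ the oscillatory integral is estimated by the near-origin analysis of Lemma \ref{asymptotic-behavior-of-kernel}, producing the Bessel-type contribution $\max(1,|x|^{\theta-n})$ after translating back.

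The principal technical obstacle is the careful matching at the scale $|x|\sim t^{\alpha/2}$ between the near-origin Bessel-type behaviour and the tail Miyachi-type oscillatory decay, and for $0<t<1$ the additional matching at $|x|\sim 1$ forced by the non-scaling Bessel weight; both transitions require combining the $L$ and $H$ contributions delicately. The variants $J^{\theta}Q_{\alpha}$ and $J^{\theta}P_{\alpha}$ are handled identically after absorbing the appropriate power of $|\xi|$ into the weight. Finally, the uniform integrability $\int_{\mathbb{R}^n\setminus B(0,1)}|J^{\theta}S_{\alpha}(t,x)|\,dx\lesssim 1$ for $0<t<1$ is immediate from the integrable tail bound $|x|^{-(n+\sigma)}$ with $\sigma>0$, and the analogous statements for $Q_\alpha$ and $P_\alpha$ follow in the same way.
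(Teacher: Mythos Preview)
Your approach is essentially the same as the paper's: after the change of variables $\eta=t^{\alpha/2}\xi$, your decomposition $L+H$ coincides with the paper's $I_1+I_2$, and your treatment of $H$ (splitting off the leading oscillatory term via \eqref{asymptotic-of-ml-infity} and invoking Corollary~\ref{equivalence of Riesz and Potential}) is a mild reorganisation of the paper's comparison of $I_2$ with the homogeneously weighted surrogate $\tilde I_2$, which in turn appeals to the already-proved pieces \eqref{asymptotic of part 2}, \eqref{asymptotic of part 3}, \eqref{asymptotic of part 3'} of Lemma~\ref{asymptotic-behavior-of-kernel}.

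The one place your sketch has a genuine gap is the large-$|x|$ behaviour of the low-frequency piece $L$. You only state the trivial bound $|L|\lesssim\max(1,t^{\alpha(\theta-n)/2})$, which is constant in $x$ and therefore cannot contribute to the tail estimate $|x|^{-n-\sigma}$ for $|x|\ge 1$ in (1)(a) (nor to the tail in (1)(b)). Mentioning Fa\`a di Bruno control of the $\xi$-derivatives of $E_{\alpha,1}$ is not enough by itself: the derivatives of the non-scaling weight $(1+t^{-\alpha}|\eta|^2)^{-\theta/2}$ carry their own $t$-dependence, and this is exactly where the work lies. The paper handles this by integrating $I_1$ by parts $N$ times and then splitting the resulting integral at $|\eta|=t^{\alpha/2}$, obtaining the two-sided bound
\[
|I_1(t,x)|\lesssim \min\Big\{\max\{1,t^{\frac{\alpha}{2}(\theta-n)}\},\ \frac{t^{\frac{\alpha}{2}(\theta+N-n)}+1}{|x|^{N}}\Big\},
\]
which for $0<t<1$ and $|x|\ge 1$ gives the required $|x|^{-N}$ decay. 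You should make this step explicit.
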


\begin{proof}
We only prove the estimates for $J^{\theta}S_{\alpha}(t,x)$, because the estimates for the other two terms are similarly derived. Performing a  change of variables, we can write
	\begin{align*}
	J^{\theta}S_{\alpha}(t,x)&=\frac{1}{(2\pi)^{n/2}}\int_{\mathbb R^n}e^{ ix\cdot\xi}(1+|\xi|^2)^{-\frac{\theta}{2}} E_{\alpha,1}(i^{-\alpha} t^\alpha|\xi|^2)\dd{\xi}\\
	&=\frac{1}{(2\pi)^{n/2}}t^{-\alpha n/2}\int_{\mathbb R^n}e^{ i x t^{-\frac{\alpha }{2}} \cdot\xi}(1+t^{-\alpha}|\xi|^2)^{-\frac{\theta}{2}} E_{\alpha,1}(i^{-\alpha} |\xi|^2)\dd{\xi}\\
   &=\frac{1}{(2\pi)^{n/2}}t^{-\alpha n/2}\int_{\mathbb R^n}e^{ i x t^{-\frac{\alpha }{2}} \cdot\xi}(1+t^{-\alpha}|\xi|^2)^{-\frac{\theta}{2}} E_{\alpha,1}(i^{-\alpha} |\xi|^2)\Phi(\xi)\dd{\xi}\\
   &+\frac{1}{(2\pi)^{n/2}}t^{-\alpha n/2}\int_{\mathbb R^n}e^{ i xt^{-\frac{\alpha }{2}} \cdot\xi}(1+t^{-\alpha}|\xi|^2)^{-\frac{\theta}{2}} E_{\alpha,1}(i^{-\alpha} |\xi|^2)(1-\Phi(\xi))\dd{\xi}\\
   &:= I_1(t,x)+I_2(t,x).
\end{align*}
{\bf{Case $(a)$}:} when $0<t<1$.\\
{\bf{Step 1}: } First, we claim that, for any integral $N$, the following estimate is valid,
 \begin{align}\label{claim for part 1}
  	|I_1(t,x)| \lesssim \min\left\{\max \{1, t^{\alpha/2(\theta-n)}\}, \frac{t^{\alpha/2 (\theta+N-n)}+1}{|x|^{N}}\right\}.
  \end{align}
Then, we have for $N$ large enough,
\begin{align}\label{uniform estimate part 1 higher}
	|I_{1}(t,x)|\lesssim
	\begin{cases}
	\max\{ 1, |x|^{\theta-n}\}, & |x|\leq t^{\alpha/2},\\
	\max\{ 1, t^{\frac{\alpha}{2}(\theta-n)}\} &t^{\alpha/2}\leq |x|\leq 1,\\
	\frac{1}{|x|^{{N}}},  & |x|\geq 1.
	\end{cases}
\end{align}
The claim \eqref{claim for part 1} is valid, because on the one hand, by spliting the integral into two parts, i.e. $|\xi|\leq  t^{\frac{\alpha}{2}}$ and $|\xi|\geq t^{\frac{\alpha}{2}}$, we have the following trivial bound,
\begin{align}\label{trivial estimate 1}
	|I_1(t,x)|&\lesssim t^{-\frac{\alpha}{2}n} \int_{\mathbb R^n}(1+t^{-\alpha}|\xi|^2)^{-\frac{\theta}{2}} \Phi(\xi)\dd{\xi}\nonumber \\
	&\lesssim t^{-\frac{\alpha}{2}n}\int_{|\xi|\leq t^{\frac{\alpha}{2}}} \Phi(\xi)\dd{\xi}+  t^{-\frac{\alpha}{2}n}\int_{t^{\frac{\alpha}{2}}\leq |\xi|\leq 1 }t^{\frac{\alpha}{2}\theta}|\xi| ^{-\theta} \Phi(\xi)\dd{\xi}\nonumber\\
	&\lesssim t^{-\frac{\alpha}{2}n} t^{-\frac{\alpha}{2}n}+ (1+t^{\frac{\alpha}{2}(\theta-n)})\nonumber\\
	&\lesssim
	\max \{1,t^{\frac{\alpha}{2}(\theta-n) }\}.
	\end{align}
On the other hand, define
\begin{align*}
	\mathcal D u=-i \sum_{j=1}^n \frac{x_jt^{-\alpha/2}}{|x t^{-\alpha/2}|}\partial_j u
\end{align*}
then it is easy to see that its adjoint operator is $^{T}\mathcal D= -\mathcal D$ and
\begin{align*}
	\mathcal D e^{ i x t^{-\frac{\alpha }{2}} \cdot\xi}=|x t^{-\alpha/2}|e^{ i x t^{-\frac{\alpha }{2}} \cdot\xi}.
\end{align*}
Hence by integrating by parts $N$ times, we have
\begin{align*}
	I_1(t,x)&=\frac{1}{(2\pi)^{n/2}}t^{-\alpha n/2}\int_{\mathbb R^n}\frac{1}{|ixt^{-\alpha/2}|^N}\mathcal D^Ne^{ i xt^{-\frac{\alpha }{2}} \cdot\xi}\left[(1+t^{-\alpha}|\xi|^2)^{-\frac{\theta}{2}} E_{\alpha,1}(i^{-\alpha} |\xi|^2)\Phi(\xi)\right]\dd{\xi}
	\\
	&=\frac{1}{(2\pi)^{n/2}}t^{-\alpha n/2}\frac{1}{|-ixt^{-\alpha/2}|^N}\int_{\mathbb R^n} e^{ i xt^{-\frac{\alpha }{2}} \cdot\xi} \mathcal D^N\left[(1+t^{-\alpha}|\xi|^2)^{-\frac{\theta}{2}} E_{\alpha,1}(i^{-\alpha} |\xi|^2)\Phi(\xi)\right]\dd{\xi}
	\\
	&=\frac{1}{(2\pi)^{n/2}}t^{-\alpha n/2}\frac{1}{|-ixt^{-\alpha/2}|^N}\int_{|\xi|\leq t^{\alpha/2}}e^{ i xt^{-\frac{\alpha }{2}} \cdot\xi}\mathcal D^N\left[(1+t^{-\alpha}|\xi|^2)^{-\frac{\theta}{2}} E_{\alpha,1}(i^{-\alpha} |\xi|^2)\Phi(\xi)\right]\dd{\xi}\\
	&+\frac{1}{(2\pi)^{n/2}}t^{-\alpha n/2}\frac{1}{|-ixt^{-\alpha/2}|^N}\int_{t^{\frac{\alpha}{2}}\leq|\xi|\leq 1}e^{ i xt^{-\frac{\alpha }{2}} \cdot\xi} \mathcal D^N\left[(1+t^{-\alpha}|\xi|^2)^{-\frac{\theta}{2}} E_{\alpha,1}(i^{-\alpha} |\xi|^2)\Phi(\xi)\right]\dd{\xi}\\
	&=: I_{11}(t,x)+I_{12}(t,x).
\end{align*}
For $ I_{11}(t,x)$, since $t^{-\alpha}|\xi|^2\leq 1$, direct calculation yields
\begin{align*}
\big|\mathcal D^N \big[(1+t^{-\alpha}|\xi|^2)^{-\frac{\theta}{2}} E_{\alpha,1}(i^{-\alpha} |\xi|^2)\big]\big|\lesssim t^{-\frac{\alpha}{2}N}.	
\end{align*}
Thus,
\begin{align} \label{integration by part 1 low}
	|I_{11}(t,x)|\lesssim   \frac{t^{-\alpha n/2}}{|x|^{N}}\int_{|\xi|\leq t^{\alpha/2}} \dd{\xi}\lesssim \frac{1}{|x|^{N}}.
\end{align}
For $I_{22}(t,x)$, since $t^{-\alpha}|\xi|^2\geq 1$, we have $\mathcal D^N\left[(1+t^{-\alpha}|\xi|^2)^{-\frac{\theta}{2}}\right]\sim t^{\frac{\theta \alpha}{2}}|\xi|^{\theta-N}$. Thus,
\begin{align}\label{integration by part 1 high}
	| I_{12}(t,x)|&\lesssim \frac{t^{-\alpha n/2}}{|xt^{-\alpha/2}|^{
	N}}\int_{t^{\frac{\alpha}{2}}\leq|\xi|\leq 1}t^{\frac{\alpha}{2}\theta}|\xi|^{-\theta-N}\dd{\xi}\nonumber\\
	&\lesssim \frac{t^{\alpha/2 (\theta+N-n)}}{|x|^{N}}(1+t^{\alpha/2(n-\theta-N)})\lesssim \frac{t^{\alpha/2 (\theta+N-n)}+1}{|x|^{N}}.
\end{align}
Combining \eqref{trivial estimate 1}, \eqref{integration by part 1 low}, and \eqref{integration by part 1 high}, we obtain
\begin{align}\label{uniform estimate part 1 b}
	|I_1(t,x)|\lesssim \min\left\{\max \{1, t^{\alpha/2(\theta-n)}\} ,\frac{t^{\alpha/2 (\theta+N-n)}+1}{|x|^{N}}\right\}.
\end{align} \\
{\bf{Step 2}:} Next we deal with $I_2(t,x)$. In order to use the scaling property, let us introduce $\tilde{I}_2(t,x)$, defined as
 \[\tilde{I}_2 (t,x):=\frac{1}{(2\pi)^{n/2}}t^{-\alpha n/2}\int_{\mathbb R^n}e^{ ix t^{-\alpha/2}\cdot\xi}t^{\frac{\alpha \theta}{2}}|\xi|^{-\theta} E_{\alpha,1}(i^{-\alpha} |\xi|^2)(1-\Phi(\xi))\dd{\xi}.\]
In this case $t^{-\alpha/2}|\xi|\geq 1$, thus $I_2(t,x)$ and $\tilde{I}_2(t,x)$ has the same asymptotic as $ |xt^{-\frac{\alpha}{2}}|\to 0 $ or  $ |xt^{-\frac{\alpha}{2}}|\to \infty $.
By the estimates of \eqref{asymptotic of part 2}, \eqref{asymptotic of part 3} and \eqref{asymptotic of part 3'}, when $\theta>\frac{n}{\alpha}$, we have
\begin{align}\label{uniform estimate part 2}
	|I_2(t,x)|\sim|\tilde{I}_2(t,x)|\lesssim \begin{cases}  |x|^{\theta-n}
	& |xt^{-\frac{\alpha}{2}}|\leq 1,\\
C t^{\frac{\alpha}{2}(\theta+\sigma)}
|x|^{-n-\sigma} & |xt^{-\frac{\alpha}{2}}|\geq 1.
\end{cases}
\end{align}
where $\sigma=\frac{\theta \alpha-n}{2-\alpha}$.

In particular, when $|xt^{-\frac{\alpha}{2}}|\geq 1$ with $0<t<1$, we have
\[t^{\frac{\alpha}{2}(\theta+\sigma)}
|x|^{-n-\sigma}\leq t^{\frac{\alpha}{2}(\theta-n)},\]
 thus the following is also valid,
\begin{align}\label{uniform estimate part 2'}	
| I_2(t,x)|\sim|\tilde{I}_2(t,x)|\leq \begin{cases}  C\max\{1,|x|^{\theta-n}\}, & |x| \leq t^{\frac{\alpha}{2}},\\
	C\max\{1, t^{{\frac{\alpha}{2}(\theta-n)}}\}, &   t^{\frac{\alpha}{2}}\leq |x| \leq 1,
	\\C |x|^{-n-\sigma}, & |x|\geq 1.
\end{cases}\end{align}

With the help of \eqref{uniform estimate part 1 higher} and \eqref{uniform estimate part 2'},  when $0<t<1$, we get that if $\theta>\theta_{S_\alpha}$, then
	\[|J^{\theta}S_{\alpha}(t,x)|\leq \begin{cases}  C\max\{1,|x|^{\theta-n}\}, & |x| \leq t^{\frac{\alpha}{2}},\\
	C\max\{1, t^{{\frac{\alpha}{2}(\theta-n)}}\}, &   t^{\frac{\alpha}{2}}\leq |x| \leq 1,
	\\C  |x|^{-n-\sigma}, & |x|\geq 1.
\end{cases}\]
where $\sigma=\frac{\theta \alpha-n}{2-\alpha} >0$. In particular, if $0<t\leq 1$,  $J^{\theta}S_{\alpha}(t,x)$ is in $L^1(\mathbb R^n\setminus B(0,1))$ uniformly.\\
{\bf{Case $(b)$}:} when $t>1$.\\
It is easy to notice that the inequality \eqref{uniform estimate part 1 b} is still valid when $t>1$. Then we only need to estimate $I_2(t,x)$. We can rewrite $I_2(t,x)$ as
\begin{align*}
I_2(t,x)=&\frac{1}{(2\pi)^{n/2}}t^{-\alpha n/2}\int_{\mathbb R^n}e^{ i xt^{-\frac{\alpha }{2}} \cdot\xi}(1+t^{-\alpha}|\xi|^2)^{-\frac{\theta}{2}} E_{\alpha,1}(i^{-\alpha} |\xi|^2)(1-\Phi(\xi))\dd{\xi}\\
=&\frac{1}{(2\pi)^{n/2}}t^{-\alpha n/2}\int_{1\leq |\xi| \leq t^{\alpha/2}}e^{ i xt^{-\frac{\alpha }{2}} \cdot\xi}(1+t^{-\alpha}|\xi|^2)^{-\frac{\theta}{2}} E_{\alpha,1}(i^{-\alpha} |\xi|^2)(1-\Phi(\xi))\dd{\xi}\\
&+\frac{1}{(2\pi)^{n/2}}t^{-\alpha n/2}\int_{ |\xi| \geq t^{\alpha/2}}e^{ i xt^{-\frac{\alpha }{2}} \cdot\xi}(1+t^{-\alpha}|\xi|^2)^{-\frac{\theta}{2}} E_{\alpha,1}(i^{-\alpha} |\xi|^2)(1-\Phi(\xi))\dd{\xi}\\
=&: I_{21}+ I_{22}.
\end{align*}
When $t>1$ and $t^{-\alpha}|\xi|^2\leq 1$,  for $I_{21}$, we have
\begin{align}
	|I_{21}(t,x)|\lesssim t^{-\alpha n/2}( t^{\alpha n/2}+1)\lesssim 1.
\end{align}
As for $I_{22}(t,x)$, when $t^{-\alpha}|\xi|^2\geq 1$, $(1+t^{-\alpha}|\xi|^2)^{-\frac{\theta}{2}}\sim t^{\alpha \theta/2}|\xi|^{-\theta}$, then we have that
\begin{align*}
	|I_{22}(t,x)|\sim |\tilde I_2(t,x)|\lesssim \begin{cases}  |x|^{\theta-n}
	& |xt^{-\frac{\alpha}{2}}|\leq 1,\\
C t^{\frac{\alpha}{2}(\theta+\sigma)}
|x|^{-n-\sigma} & |xt^{-\frac{\alpha}{2}}|\geq 1.
\end{cases}
\end{align*}\\
Therefore, when $t>1$, we have \begin{align}
		|J^{\theta}S_{\alpha}(t,x)|\leq \begin{cases}  C\max\{1,|x|^{\theta-n}\}, & |x| \leq t^{\frac{\alpha}{2}},\\
	C t^{\frac{\alpha}{2}(\theta+\sigma)} |x|^{-n-\sigma}, & |x|\geq t^{\frac{\alpha}{2}}.
\end{cases}
	\end{align}
This finishes the proof.
\end{proof}

\begin{cor} For any fixed $t_0>0$, we have
\begin{enumerate}
	\item if $\theta> \theta_{S_\alpha}$, then there exist a constant $C>0$ such that for all  $t\in [\frac{t_0}{2}, \frac{3t_0}{2}]$,
	\[\|J^\theta S_\alpha(t,x)\|_{ L^{1}(\mathbb R^n)}\leq C.\]
   \item if $\theta> \theta_{Q_\alpha}$, then there exist a constant $C>0$ such that for all  $t\in [\frac{t_0}{2}, \frac{3t_0}{2}]$,	\[\|J^\theta Q_\alpha(t,x)\|_{ L^{1}(\mathbb R^n)}\leq C.\]
   \item if $\theta> \theta_{P_\alpha}$, then
hen there exist a constant $C>0$ such that for all  $t\in [\frac{t_0}{2}, \frac{3t_0}{2}]$,
	\[\|J^\theta P_\alpha(t,x)\|_{ L^{1}(\mathbb R^n)}\leq C.\]
\end{enumerate} 	
\end{cor}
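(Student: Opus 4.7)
The plan is to reduce the claim to a direct integration of the pointwise bounds supplied by Lemma \ref{Bessel potential estimate}, using the fact that $[t_0/2,3t_0/2]$ is a compact subset of $(0,\infty)$. Since the three kernels are treated identically with the appropriate value of $\sigma$, I will only describe the argument for $J^\theta S_\alpha$.

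First I would separate two cases according to the size of $t_0$. If $3t_0/2\leq 1$, every $t$ in the relevant interval satisfies $0<t<1$ and Lemma \ref{Bessel potential estimate}(1)(a) applies uniformly; if $t_0/2\geq 1$, then $t\geq 1$ throughout and I use (1)(b); the transitional range $t_0/2<1<3t_0/2$ is handled by taking the maximum of the two bounds, which is still uniform because $t$ stays bounded away from both $0$ and $\infty$.

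Next, in each case I split $\mathbb R^n$ into the annular regions dictated by the lemma (either $\{|x|\leq t^{\alpha/2}\}$, $\{t^{\alpha/2}\leq|x|\leq 1\}$, $\{|x|\geq 1\}$, or $\{|x|\leq t^{\alpha/2}\}$, $\{|x|\geq t^{\alpha/2}\}$) and integrate the pointwise majorant on each piece. On the inner region, the bound $C\max\{1,|x|^{\theta-n}\}$ is integrable near $0$ because $\theta>n/\alpha>0$ implies $\theta-n>-n$, and the region itself is contained in the ball of radius $(3t_0/2)^{\alpha/2}$, giving a bound depending only on $t_0$, $\alpha$, $\theta$, $n$. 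On an intermediate region where the majorant is the constant $C\max\{1,t^{\alpha(\theta-n)/2}\}$, the factor $t^{\alpha(\theta-n)/2}$ is bounded above and below uniformly for $t\in[t_0/2,3t_0/2]$, and the region has bounded volume, so the contribution is uniform in $t$. On the exterior region the majorant is $C\,t^{\alpha(\theta+\sigma)/2}|x|^{-n-\sigma}$ (with $t$-prefactor equal to $1$ in the small-$t$ case), and since $\sigma>0$ the tail integral $\int_{|x|\geq r_0}|x|^{-n-\sigma}\,dx$ is finite; the $t$-prefactor is again uniformly bounded on the compact interval.

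Summing the three uniform contributions yields the desired constant $C$. The same argument applied with $\theta_{Q_\alpha}$, $\theta_{P_\alpha}$ and the corresponding $\sigma$ proves parts (2) and (3). No essential difficulty arises beyond bookkeeping: the compactness of $[t_0/2,3t_0/2]$ in $(0,\infty)$ ensures every $t$-dependent prefactor in Lemma \ref{Bessel potential estimate} is controlled, and the exponents ($\theta>\theta_{S_\alpha}$ etc.) are precisely what is needed for the near-origin and far-field pieces to be integrable.
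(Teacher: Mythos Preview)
Your proposal is correct and follows essentially the same approach as the paper: both arguments integrate the pointwise bounds from Lemma \ref{Bessel potential estimate} over the indicated annular regions and use the compactness of $[t_0/2,3t_0/2]\subset(0,\infty)$ to control every $t$-dependent factor. The only cosmetic difference is that the paper packages the estimate as a single $t$-independent majorant $M_{t_0}(x)\in L^1(\mathbb R^n)$ (with the region boundaries frozen at $|x|=(t_0/2)^{\alpha/2}$) and then invokes $|J^\theta S_\alpha(t,x)|\le M_{t_0}(x)$, whereas you bound each annular integral separately; the content is identical.
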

\begin{proof} As in the earlier two proofs, we only prove that the statement is valid for $J^\theta S_\alpha(t,x)$, as the other cases are similarly proven. If $0<t_0\leq 2$, define
	\begin{align*}
		M_{t_0}(x)=
		\begin{cases}  C\max\{1,|x|^{\theta-n}\}, & |x| \leq \big(\frac{t_0}{2}\big)^{\frac{\alpha}{2}},\\
	C\max\{1, t_0^{{\frac{\alpha}{2}(\theta-n)}}\}, &   \big(\frac{t_0}{2}\big)^{\frac{\alpha}{2}}\leq |x| \leq 1,
	\\C |x|^{-n-\sigma}, & |x|\geq 1,
\end{cases}
	\end{align*}	
and if $t_0\geq 2$, define
	\begin{align*}
		M_{t_0}(x)=
		\begin{cases}  C\max\{1,|x|^{\theta-n}\}, & |x| \leq \big(\frac{t_0}{2}\big)^{\frac{\alpha}{2}},
			\\C  t_0^{\theta+\sigma}|x|^{-n-\sigma}, & |x|\geq \big(\frac{t_0}{2}\big)^{\frac{\alpha}{2}},
\end{cases}
\end{align*}
where $\sigma=\frac{\theta \alpha-n}{2-\alpha} >0$. In either case, $M_{t_0}(x)\in L^1(\mathbb R^n)$.	By the estimates \eqref{estimate for S for small t} and \eqref{estimate for S for big t}, for any $t\in [\frac{t_0}{2}, \frac{3t_0}{2}]$, $|J^\theta S_\alpha(t,x)|\leq M_{t_0}(x)$. Thus there exist a constant $C>O$ such that
 $\|J^\theta S_\alpha(t,x)\|_{L^{1}(\mathbb R^n)}\leq C$ for all  $t\in [\frac{t_0}{2}, \frac{3t_0}{2}]$.
\end{proof}

\section{The proof of the main theorem}
		
In this section, we prove Theorem \ref{main-result-1}. We divide our proof into three main subsections, i.e. space regularity,  time regularity and pointwise convergence to the initial data.

\subsection{Space regularity}
In this subsection, by exploring the multiplier properties for the corresponding Mittag--Leffler functions, we prove the space regularity for the mild solution.
To start with, we give some estimates for the derivatives of the oscillatory Mittag--Leffler functions.
\begin{lem}\label{derivative-lem}
	 Let $1<\alpha<2$, and assume that $\gamma$ is a multi-index nonnegative integer,
	 then
	 \begin{enumerate}
	 	\item when $|\xi|\leq 1$, $D^\gamma\widehat S_\alpha(\xi)$,  $D^\gamma\widehat Q_\alpha(\xi)$ and $D^\gamma\widehat P_\alpha(\xi)$  are bounded;
	 	\item when $|\xi|\geq 1$,	 we have
\begin{enumerate}
	\item $|D^\gamma\widehat S_\alpha(\xi)|\leq C |\xi|^{(\frac{2}{\alpha}-1)|\gamma|} $;
	\item $|D^\gamma\widehat Q_\alpha(\xi)|\leq C|\xi|^{-\frac{2}{\alpha}}|\xi|^{(\frac{2}{\alpha}-1)|\gamma|}$;
	\item $|D^\gamma\widehat P_\alpha(\xi)|\leq C|\xi|^{-\frac{2(\alpha-1)}{\alpha}}|\xi|^{(\frac{2}{\alpha}-1)|\gamma|}$.
\end{enumerate}

	 \end{enumerate}

\end{lem}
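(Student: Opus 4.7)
The plan is to split into the low-frequency regime $|\xi|\le 1$ and the high-frequency regime $|\xi|\ge 1$, handling the former by smoothness and the latter by combining Faà di Bruno's formula with the large-argument asymptotic expansion of the Mittag--Leffler function.

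For part (1), each of $\widehat S_\alpha(\xi) = E_{\alpha,1}(i^{-\alpha}|\xi|^2)$, $\widehat Q_\alpha(\xi) = E_{\alpha,2}(i^{-\alpha}|\xi|^2)$, and $\widehat P_\alpha(\xi) = i^{-\alpha}E_{\alpha,\alpha}(i^{-\alpha}|\xi|^2)$ is a smooth function of $\xi \in \mathbb R^n$, because $E_{\alpha,\beta}(w)$ is entire by the series definition \eqref{definition-of-ml} and $\xi \mapsto |\xi|^2$ is a smooth (polynomial) map. Hence every $D^\gamma$ of each function is continuous, and so bounded on the compact set $\{|\xi|\le 1\}$.

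For part (2), I would set $y = g(\xi) := i^{-\alpha}|\xi|^2$ and apply Faà di Bruno (Lemma \ref{faa di bruno}) with outer function $f = E_{\alpha,\beta}$ for $\beta\in\{1,2,\alpha\}$. Since $g$ is a polynomial of degree $2$, I have $|D^{\boldsymbol{\ell_j}}g(\xi)|\lesssim |\xi|^{2-|\boldsymbol{\ell_j}|}$ when $|\boldsymbol{\ell_j}|\le 2$ and it vanishes otherwise, so using the Faà di Bruno constraints $\sum_j k_j = r$ and $\sum_j k_j|\boldsymbol{\ell_j}| = |\gamma|$,
\[
\Big|\prod_{j=1}^{|\gamma|}\bigl[D^{\boldsymbol{\ell_j}}g(\xi)\bigr]^{k_j}\Big|\ \lesssim\ |\xi|^{2r-|\gamma|}.
\]
The key auxiliary fact I need is the following estimate, valid on the sector $|\arg w| < \min\{\pi,\alpha\pi\}$ and for $|w|$ large:
\[
|E_{\alpha,\beta}^{(r)}(w)|\ \lesssim\ |w|^{(1-\beta+r)/\alpha - r}.
\]
This follows by differentiating the asymptotic formula \eqref{asymptotic-of-ml-infity}, whose leading term is $\frac{1}{\alpha}w^{(1-\beta)/\alpha}e^{w^{1/\alpha}}$; each $w$-derivative of the exponential contributes a factor of $\frac{1}{\alpha}w^{1/\alpha-1}$, so $r$-fold differentiation produces a leading factor of size $|w|^{(1-\beta+r)/\alpha - r}$, while the polynomial tail of the expansion contributes strictly lower-order terms. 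Alternatively one may iterate the differentiation identity \eqref{derivative-formula} together with \eqref{asymptotic-of-ml-infity} to obtain the same bound. I would prove this as a short preliminary lemma, noting that for $\xi\in\mathbb R^n$ the argument $y = i^{-\alpha}|\xi|^2$ satisfies $|\arg y| = \alpha\pi/2 < \pi$ (using $\alpha<2$), so the asymptotic formula indeed applies and $|y| = |\xi|^2$.

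With these two ingredients in hand, the proofs of (a), (b), (c) are parallel. Summing over $r = 1,\dots,|\gamma|$,
\[
|D^\gamma\widehat S_\alpha(\xi)| \ \lesssim\ \sum_{r=1}^{|\gamma|} |\xi|^{2r/\alpha - 2r}\cdot |\xi|^{2r-|\gamma|} \ =\ \sum_{r=1}^{|\gamma|} |\xi|^{2r/\alpha - |\gamma|},
\]
and since $2/\alpha > 1$, the dominant contribution is $r = |\gamma|$, giving $|\xi|^{(2/\alpha-1)|\gamma|}$. The identical bookkeeping with $\beta = 2$ yields an extra overall factor of $|\xi|^{2(1-\beta)/\alpha} = |\xi|^{-2/\alpha}$, and with $\beta = \alpha$ an extra factor of $|\xi|^{-2(\alpha-1)/\alpha}$. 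The cases $|\gamma|=0$ (not covered by the Faà di Bruno sum) reduce directly to the leading-term asymptotics of $E_{\alpha,\beta}$.

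I expect the main obstacle to be the rigorous justification of the derivative bound on $E_{\alpha,\beta}^{(r)}$ on the ray $\arg w = -\alpha\pi/2$; termwise differentiation of an asymptotic expansion is not automatic, but here it can be secured by applying the Cauchy integral formula on a slightly narrower sector where $E_{\alpha,\beta}$ is analytic, or by reducing via \eqref{derivative-formula} to the already-known asymptotics of Mittag--Leffler functions with shifted second parameter. Once this estimate is established, the rest is routine Faà di Bruno bookkeeping of derivative counts.
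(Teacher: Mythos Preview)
Your proposal is correct and follows the same overall architecture as the paper (low-frequency boundedness by smoothness, high-frequency via Fa\`a di Bruno), but with a different choice of inner variable. You take $y = i^{-\alpha}|\xi|^2$ and must then bound $E_{\alpha,\beta}^{(r)}(w)$ directly on the ray $\arg w = -\alpha\pi/2$, which you correctly flag as the nontrivial step requiring Cauchy's formula or a reduction. The paper instead takes $z = i^{-1}|\xi|^{2/\alpha}$, so that the outer function becomes $z\mapsto z^{\beta-1}E_{\alpha,\beta}(z^\alpha)$; this substitution is chosen precisely so that the identity \eqref{derivative-formula} applies and the bound \eqref{boundedness-of-mf} on $|\arg z|=\pi/2$ gives $\bigl|\tfrac{d^r}{dz^r}[z^{\beta-1}E_{\alpha,\beta}(z^\alpha)]\bigr|\le C$ immediately, with no termwise differentiation of an asymptotic expansion needed. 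For $\beta\ne 1$ the paper factors $E_{\alpha,\beta}(z^\alpha)=z^{1-\beta}\cdot z^{\beta-1}E_{\alpha,\beta}(z^\alpha)$ and applies Leibniz. The inner-derivative estimate becomes $|D^\gamma z|\lesssim|\xi|^{2/\alpha-|\gamma|}$, and the Fa\`a di Bruno bookkeeping then produces the same sum $\sum_r |\xi|^{(2/\alpha)r-|\gamma|}$ with dominant term at $r=|\gamma|$ that you obtain. Your route treats the three values of $\beta$ uniformly at the cost of the auxiliary lemma on $E_{\alpha,\beta}^{(r)}$; the paper's route trades that lemma for a slightly less obvious substitution plus a Leibniz step, but sidesteps the asymptotic-differentiation issue entirely.
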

\begin{proof}
{\bf{Case 1}}: When $|\xi|\leq 1$, since the Mittag--Leffler function $E_{\alpha,1}(i^{-\alpha}|\xi|^2)$ is an entire function, so all its derivatives are bounded.\\
{\bf{Case 2}}: When $|\xi|>1$, let us denote $z=i^{-1}|\xi|^{\frac{2}{\alpha}}$. By induction, it is easy to verify that
 \begin{align}\label{induction estimate}
 	|D^{\gamma} z|\leq C_\gamma |\xi|^{\frac{2}{\alpha}-|\gamma|}.
 \end{align}
  Notice that when $1<\alpha<2$, we have $\frac{2}{\alpha}-1>0$, which implies that $|\xi|^{(\frac{2}{\alpha}-1)}\to \infty$ is an increasing term, and it is the main term compared with $|\xi|^{-1}$ as $|\xi|\to \infty$.\\
 (1) Firstly, we explain the result is valid for $\widehat S_\alpha(\xi)$.
   Let $\gamma$ be a multi-index. We rewrite $S_\alpha(\xi)$ as a function of $z$, that is
    $\widehat S_\alpha(\xi)=E_{\alpha,1}(i^{-\alpha} |\xi|^2)=E_{\alpha,1}(z^\alpha)$ with $z=i^{-1}|\xi|^{\frac{2}{\alpha}}$.
Then we calculate its derivatives by  Fa\'{a} di Bruno's formula Lemma \ref{faa di bruno}, and the main term for $D^{\gamma}E_{\alpha,1}(i^{-\alpha} |\xi|^2)$ is
    $\frac{d^{|\gamma|}}{dz^{|\gamma|}}E_{\alpha,1}(z^\alpha)D^\gamma z$ as $|\xi|\to \infty$. By \eqref{boundedness-of-mf} and \eqref{induction estimate}, we have
    \begin{align*}
    |D^\gamma\widehat S_\alpha(\xi)|\leq C |\xi|^{(\frac{2}{\alpha}-1)|\gamma|}.	
    \end{align*}\\

(2) Similarly, we can prove that
\begin{align}
|D^\gamma (zE_{\alpha,2}(i^{-\alpha}|\xi|^2))| &\leq |\xi|^{(\frac{2}{\alpha}-1)|\gamma|},\label{Q estimate} \\
	|D^\gamma (z^{-1})|&\leq C |\xi|^{-\frac{2}{\alpha}-|\gamma|}\label{Q estimate 2} .
\end{align}

By the Leibniz rule, we have
\begin{align}
	D^\gamma \widehat Q_\alpha(\xi)&= D^\gamma  E_{\alpha,2}(i^{-\alpha}|\xi|^2) = D^\gamma  (z^{-1} zE_{\alpha,2}(i^{-\alpha}|\xi|^2))\\
	&=\sum_{\eta \leq \gamma} \binom{\gamma}{\eta}D^\eta (z^{-1}) D^{\gamma-\eta}(zE_{\alpha,2}(i^{-\alpha}|\xi|^2)).
	\nonumber
\end{align}
By \eqref{Q estimate} and  \eqref{Q estimate 2}, we have
\begin{align}
	|D^\gamma \widehat Q_\alpha(\xi)|\leq C |\xi|^{-\frac{2}{\alpha}}|\xi|^{(\frac{2}{\alpha}-1)|\gamma|}.
\end{align}

(3) The method is the same for $P_\alpha (x)$, thus we omit the details.
 \end{proof}

\begin{lem} \label{multiplier-inside a ball}
Let $m$ be a smooth function and supported in the unit ball.  If \[\left| D ^ { \gamma } m ( \xi ) \right|\leq C,\]
	then $m\in \mathcal M(C^\theta, C^\theta )$.
\end{lem}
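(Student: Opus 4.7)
The plan is to show that the convolution kernel associated to $m$ lies in the Besov space $B_{1,\infty}^0(\mathbb R^n)$ and then invoke the Young-type Lemma \ref{Besov multiplier}, which provides the desired boundedness of convolution operators on H\"older spaces. Since Theorem \ref{multiplier-thm} applies only to multipliers vanishing on $\{|\xi|\le 2\}$, it cannot be applied directly, and a separate argument for the low-frequency region is needed.

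First I would set $K := \mathcal F^{-1}(m)$. Because $m$ is $C^\infty$ and compactly supported in the unit ball with bounded derivatives, $K$ is a Schwartz function; in particular $K \in L^1(\mathbb R^n)$. Moreover, writing out the formula $T_m f = \mathcal F^{-1}(m\widehat f)$ and using the convolution identity, one gets $T_m f = c_n\, K * f$ up to a fixed constant, so it suffices to prove $K * f \in C^\theta$ with the appropriate norm bound.

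Next, I would verify $K \in B_{1,\infty}^0(\mathbb R^n)$. Using the Littlewood--Paley family $\{\psi_j\}$ from the definition of Besov spaces, observe that $\operatorname{supp}\psi_j \subset \{|\xi|\ge 2^{j-1}\}$ for $j\ge 1$. Since $m = \widehat K$ is supported in $\{|\xi|\le 1\}$, the product $\psi_j m$ vanishes identically for $j\ge 2$. Only $\psi_0 m$ and $\psi_1 m$ can be nonzero, and both are $C_c^\infty$ functions; their inverse Fourier transforms are Schwartz and hence lie in $L^1(\mathbb R^n)$. Therefore
\[
\|K\|_{B_{1,\infty}^0} \;=\; \sup_{j\ge 0}\,\|(\psi_j m)^{\vee}\|_{L^1} \;<\; \infty.
\]

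Finally, given any $f\in C^\theta(\mathbb R^n)$, Lemma \ref{Besov multiplier} applied to $K\in B_{1,\infty}^0$ and $f\in C^\theta$ yields $K*f \in C^\theta(\mathbb R^n)$ with $\|K*f\|_{C^\theta}\lesssim \|K\|_{B_{1,\infty}^0}\|f\|_{C^\theta}$. This translates into $\|T_m f\|_{C^\theta}\lesssim \|f\|_{C^\theta}$, which is exactly the statement $m\in \mathcal M(C^\theta,C^\theta)$. There is no real obstacle here; the only point requiring a small amount of care is verifying that the Littlewood--Paley pieces of $m$ truncate after finitely many terms so that $K$ genuinely belongs to $B_{1,\infty}^0$ (rather than merely to $L^1$), which is precisely what the compact support of $m$ inside the unit ball provides.
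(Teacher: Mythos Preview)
Your proposal is correct and follows essentially the same approach as the paper: both reduce to showing $K=\mathcal F^{-1}(m)\in B_{1,\infty}^0$ and then invoke Lemma \ref{Besov multiplier}. The paper is slightly more terse, simply noting that $m$ is Schwartz so $K\in\mathcal S\subset B_{1,\infty}^0$, whereas you spell out the Littlewood--Paley verification; but the underlying idea is identical.
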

\begin{proof} According to Lemma \ref{Besov multiplier}, we only need to prove that $K_m(x):=\mathcal F^{-1}(m(\xi))\in B_{1,\infty}^0$.  Since $m$ is a Schwartz function, $K_m(x)\in \mathcal S\subset B_{1,\infty}^0$.	
\end{proof}

Now we are ready to give the multiplier proposition.
\begin{prop}
\begin{enumerate}
	\item $\widehat{S}_\alpha(\xi)$ is a Fourier multiplier from $C^{\frac{n}{\alpha}+s}(\mathbb R^n)$ to $C^{s}(\mathbb R^n)$;
	\item $\widehat{Q}_\alpha(\xi)$ is a Fourier multiplier from $C^{\frac{n}{\alpha}-\frac{2}{\alpha}+s}(\mathbb R^n)$ to $C^{s}(\mathbb R^n)$; and
	\item $\widehat{P}_\alpha(\xi)$ is a Fourier multiplier from $C^{\frac{n}{\alpha}+\frac{2}{\alpha}-2+s}(\mathbb R^n)$ to $C^{s}(\mathbb R^n)$.
	\end{enumerate}
In addition, these results are sharp, in the sense that the operators do not map into $C^{s'}(\mathbb R^n)$ for any $s'>s$.
\end{prop}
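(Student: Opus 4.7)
The strategy is the same for all three multipliers, so I describe it for $\widehat{S}_\alpha$; the other two are essentially identical once one matches the decay of the derivative bounds from Lemma \ref{derivative-lem}.

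\textbf{Step 1: frequency decomposition.} Fix a cutoff $\phi\in C^\infty(\mathbb R^n)$ with $\phi(\xi)=0$ for $|\xi|\le 2$ and $\phi(\xi)=1$ for $|\xi|\ge 3$, and split
\[
\widehat{S}_\alpha(\xi)=\bigl(1-\phi(\xi)\bigr)\widehat{S}_\alpha(\xi)+\phi(\xi)\widehat{S}_\alpha(\xi)=:m_{\mathrm{low}}(\xi)+m_{\mathrm{high}}(\xi).
\]

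\textbf{Step 2: the low-frequency piece.} Because $E_{\alpha,1}$ is entire, $\widehat{S}_\alpha$ is smooth at the origin, and Lemma \ref{derivative-lem}(1) gives $|D^\gamma m_{\mathrm{low}}(\xi)|\le C_\gamma$ for every multi-index $\gamma$, with $m_{\mathrm{low}}$ supported in $\{|\xi|\le 3\}$. A scaled version of Lemma \ref{multiplier-inside a ball} then yields $m_{\mathrm{low}}\in\mathcal M(C^\sigma,C^\sigma)$ for every $\sigma\in\mathbb R$. Using the embedding $C^{s+n/\alpha}\hookrightarrow C^s$ (valid since $n/\alpha>0$), we conclude $m_{\mathrm{low}}\in\mathcal M(C^{s+n/\alpha},C^s)$.

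\textbf{Step 3: the high-frequency piece via Miyachi.} I apply Theorem \ref{multiplier-thm} with $a=\tfrac{2}{\alpha}>1$ and $\theta=-\tfrac{n}{\alpha}$; the hypothesis reduces to
\[
|D^\gamma m_{\mathrm{high}}(\xi)|\le C\,|\xi|^{(\frac{2}{\alpha}-1)|\gamma|},\qquad |\gamma|\le [n/2]+1,
\]
on $|\xi|\ge 2$, which is exactly what Lemma \ref{derivative-lem}(2)(a) provides (after absorbing $\phi$ and its derivatives, which cost only bounded factors since $\phi$ has compact derivative support). Theorem \ref{multiplier-thm} then gives $m_{\mathrm{high}}\in\mathcal M(C^{s+n/\alpha},C^s)$. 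Combining with Step 2 proves the claim for $\widehat{S}_\alpha$. For $\widehat{Q}_\alpha$ one takes $\theta=-\tfrac{n-2}{\alpha}$ and uses Lemma \ref{derivative-lem}(2)(b); for $\widehat{P}_\alpha$ one takes $\theta=-\bigl(\tfrac{n}{\alpha}+\tfrac{2}{\alpha}-2\bigr)$ and uses Lemma \ref{derivative-lem}(2)(c). In each case the numerology $-\theta-\tfrac{na}{2}$ in the hypothesis of Theorem \ref{multiplier-thm} matches the extra inverse power of $|\xi|$ appearing in Lemma \ref{derivative-lem}.

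\textbf{Step 4: sharpness.} For sharpness, by Theorem \ref{thm-asymptotic} the leading behavior of $\widehat{S}_\alpha(\xi)$ at infinity is $\tfrac{1}{\alpha}\,e^{-i|\xi|^{2/\alpha}}$ up to terms decaying like $|\xi|^{-2}$; thus $\phi(\xi)\widehat S_\alpha(\xi)$ differs from Miyachi's model multiplier $m^-_{2/\alpha,0}(\xi)=\phi(\xi)e^{-i|\xi|^{2/\alpha}}$ by a symbol that, by the same Theorem \ref{multiplier-thm} reasoning with $\theta=-\tfrac{n}{\alpha}+2$, is a multiplier $C^{s+n/\alpha-2}\to C^s$ and is therefore \emph{smoother} than the model. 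Hence if $\widehat S_\alpha$ mapped $C^{s+n/\alpha}$ into some $C^{s'}$ with $s'>s$, then by subtraction the model multiplier would too, contradicting the known sharpness of Miyachi's estimates (see \cite{Miyachi}). The same comparison argument, applied with the corresponding leading-order symbols $|\xi|^{-2/\alpha}e^{-i|\xi|^{2/\alpha}}$ and $|\xi|^{-2(\alpha-1)/\alpha}e^{-i|\xi|^{2/\alpha}}$ read off from the expansions just above Lemma \ref{asymptotic-behavior-of-kernel}, gives sharpness for $\widehat Q_\alpha$ and $\widehat P_\alpha$.

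The main obstacle is Step 3: one must check that the cutoff $\phi$ does not spoil the Miyachi hypothesis, and that the error terms in the asymptotic expansion of $E_{\alpha,1}(i^{-\alpha}|\xi|^2)$ never violate the derivative inequality at any intermediate $|\xi|$; both are handled by Lemma \ref{derivative-lem}, which is the reason that lemma was established separately. The sharpness in Step 4 is also delicate and hinges on the fact that the ``correction'' between $\widehat S_\alpha$ and the Miyachi model is strictly better behaved, so that any hypothetical extra regularity for $\widehat S_\alpha$ would transfer to the model, which is known to be optimal.
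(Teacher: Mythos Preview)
Your proposal is correct and follows essentially the same approach as the paper: split into low and high frequency, handle the low frequency via Lemma \ref{multiplier-inside a ball}, the high frequency via Lemma \ref{derivative-lem} combined with Theorem \ref{multiplier-thm}, and deduce sharpness by comparing with Miyachi's model multiplier $e^{-i|\xi|^{2/\alpha}}$ modulo a strictly smoother remainder. The only cosmetic difference is that the paper uses a cutoff supported in the unit ball (to match Lemma \ref{multiplier-inside a ball} literally), whereas you take $\phi=0$ on $|\xi|\le 2$ (to match the hypothesis of Theorem \ref{multiplier-thm} literally); each choice requires a trivial rescaling of the other lemma, and the arguments are otherwise identical.
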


\begin{proof}
The proof for $\widehat{S}_\alpha(\xi),\widehat{Q}_\alpha(\xi),\widehat{P}_\alpha(\xi)$ are similar, thus we only give the details for $\widehat{S}_\alpha(\xi)$.

Let $\Phi$ be a smooth  function supported in the unit ball, then by Lemma \ref{derivative-lem}, we have
	\begin{equation}\label{low-frequency-derivative-estimate}
			|D^\gamma \widehat{S}_\alpha(\xi) \Phi(\xi)|\leq C,
		\end{equation}
		and
	\begin{equation}\label{high-frequency-derivative-estimate}
	\left|D^{\gamma}\left(\widehat{S}_\alpha(\xi)( 1-\Phi(\xi))\right)\right|\leq C |\xi|^{(\frac{2}{\alpha}-1)|\gamma|}.
	\end{equation}
	Due to Theorem \ref{multiplier-thm}, we know that $\widehat{S}_\alpha(\xi)\left( 1-\Phi(\xi)\right)$ is a multiplier from $C^{\frac{n}{\alpha}+s}(\mathbb R^n)$ to $C^{s}(\mathbb R^n)$. Due to Lemma \ref{multiplier-inside a ball}, we know $\widehat{S}_\alpha(\xi)\Phi(\xi)$ is a multiplier from $C^{\frac{n}{\alpha}+s}(\mathbb R^n)$ to $C^{\frac{n}{\alpha}+s}(\mathbb R^n)$. Collecting these two facts give  the desired result.

Next we illustrate that our index is sharp: assume that $\widehat{S}_\alpha(\xi)$ is a Fourier multiplier from $C^{\frac{n}{\alpha}+s}(\mathbb R^n)$ to $C^{s+\epsilon_0}(\mathbb R^n)$ for some $\epsilon_0>0$. Since $\widehat{S}_\alpha(\xi)\Phi(\xi)$ defines an operator from $C^{\frac{n}{\alpha}+s}(\mathbb R^n)$ to $C^{\frac{n}{\alpha}+s}(\mathbb R^n)$, then we deduce that $\widehat{S}_\alpha(\xi)\left( 1-\Phi(\xi)\right)$ is a multiplier from $C^{\frac{n}{\alpha}+s}(\mathbb R^n)$ to $C^{s+\epsilon_0}(\mathbb R^n)$.
 Let $R(\xi)=(\widehat{S}_\alpha(\xi)-\frac{1}{\alpha}e^{i|\xi|^{\frac{n}{\alpha}}})\left( 1-\Phi(\xi)\right)$. Then we have $R(\xi)\leq \frac{C}{1+|\xi|^2}$, so $R(\xi)$ is a Fourier multiplier from $C^{\frac{n}{\alpha}+s}(\mathbb R^n)$ to $C^{\frac{n}{\alpha}+s}(\mathbb R^n)$. Thus we get $\frac{1}{\alpha}e^{i|\xi|^{\frac{n}{\alpha}}}\left( 1-\Phi(\xi)\right)$ is
$C^{\frac{n}{\alpha}+s}(\mathbb R^n)$ to $C^{s+\epsilon_0}(\mathbb R^n)$, which is in contradiction with Theorem 4.4 of Miyachi \cite{Miyachi}.

 The results for $\widehat{Q}_\alpha(\xi)$ and $\widehat{P}_\alpha(\xi)$ are similarly proven.
\end{proof}
For Mittag-Leffler kernels, we have the following  scaling properties by changing variables,
\begin{align*}
	S_\alpha(t,x)&=t^{-\frac{\alpha}{2}n}S_\alpha(1, t^{-\frac{\alpha}{2}}x),\\
	Q_\alpha(t,x)&=t^{-\frac{\alpha}{2}n+1}Q_\alpha(1, t^{-\frac{\alpha}{2}}x),\\
	P_\alpha(t,x)&=t^{-\frac{\alpha}{2}n+\alpha-1}P_\alpha(1, t^{-\frac{\alpha}{2}}x).
\end{align*}
therefore we have proven:
if $u_0(x)\in C^{\frac{n}{\alpha}+s}(\mathbb R^n)$,  $u_1(x)\in C^{\frac{n}{\alpha}-\frac{2}{\alpha}+s}(\mathbb R^n)$  and $f(t,x )\in  C^{\frac{n+2}{\alpha}-2+s}(\mathbb R^n)$ for any $t>0$, then we have $u(t,x) \in C^{s}(\mathbb R^n)$ for every $t>0$.

 \subsection{Time regularity}
	In this subsection, we will demonstrate that, if $u_0 \in C^{\frac{n}{\alpha}+\epsilon}(\mathbb R^n)$, $u_1 \in C^{2+\frac{n}{\alpha}-\frac{2}{\alpha} +\epsilon}(\mathbb R^n)$ and$f\in C_t^{0}((0,\infty): C^{\frac{n}{\alpha}+\frac{2}{\alpha}+\epsilon}(\mathbb R^n))$, then $J^{2-\alpha}u{'}$ is continuously differentiable in time for $t>0$.

In order to do that, notice if the initial data is regular enough, then we can write  $\frac{\partial }{\partial t}I_t^{2-\alpha}u{'}$ as three terms
\[\frac{\partial}{\partial t}I_t^{2-\alpha}u{'}=\frac{\partial }{\partial t}I_t^{2-\alpha}u_{0}'(t,x)+\frac{\partial}{\partial t}I_t^{2-\alpha}u_{1}'(t,x)+\frac{\partial} {\partial t}I_t^{2-\alpha}u_{f}'(t,x),\] where
\begin{align*}
\frac{\partial }{\partial t}I_t^{2-\alpha}u_{0}'(t,x)&=\int_{\mathbb R^n}M_{\alpha}(t,x-y)u_0(y)\dd{y},\\
\frac{\partial }{\partial t}I_t^{2-\alpha}u_{1}'(t,x)&=\int_{\mathbb R^n}N_{\alpha}(t,x-y)u_1(y)\dd{y},\\
\frac{\partial} {\partial t}I_t^{2-\alpha}u_{f}'(t,x)&=f(t,x)+\int_{0}^t\int_{\mathbb R^n}L_{\alpha}(t-\tau,x-y)f(\tau, y)\dd{y}\dd{\tau},
\end{align*}
with \begin{align*}
	\widehat{M_\alpha}(t,\xi)&= i^{-\alpha} |\xi|^2 E_{\alpha,1}(i^{-\alpha}t^\alpha |\xi|^2 ),\\
	\widehat{N_\alpha}(t,\xi)&= t^{1-\alpha} E_{\alpha,2-\alpha}(i^{-\alpha}t^\alpha |\xi|^2 ),\\
	\widehat{L_\alpha}(t,\xi)&= i^{-\alpha} |\xi|^2 t^{\alpha-1} E_{\alpha,\alpha}(i^{-\alpha}t^\alpha |\xi|^2 ).
\end{align*}
Therefore if we can verify that
\[M_\alpha(t,\cdot)*u_0(x), \quad N_\alpha(t,\cdot)*u_1(x),\quad \text{and}\quad\int_{0}^{t}\int_{\mathbb R^n}L_\alpha(t-\tau, x-y )f(\tau, y)\dd{y}\dd{\tau}\]
exist for every $x\in \mathbb R^n$ and it is continuous with respect to $t>0$, then we have our desired results.

For the above kernels, we have the following  scaling properties.
\begin{align*}
	M_\alpha(t,x)&=t^{-\frac{\alpha}{2}n-\alpha}M_\alpha(1, t^{-\frac{\alpha}{2}}x),\\
	N_\alpha(t,x)&=t^{-\frac{\alpha}{2}n+1-\alpha}N_\alpha(1, t^{-\frac{\alpha}{2}}x), \quad \text{and}\\
	L_\alpha(t,x)&=t^{-\frac{\alpha}{2}n-1}L_\alpha(1, t^{-\frac{\alpha}{2}}x)
\end{align*}
Analogously,  we write $M_\alpha(x) = M_\alpha(1,x), N_\alpha(x) = N_\alpha(1,x)$ and $ L_\alpha(x) = L_\alpha(1,x)$ for short.

\begin{lem} Let $\theta>0$.  Then $J^\theta M_\alpha(x), J^\theta N_\alpha(x)$ and $ J^\theta L_\alpha(x)$ are all smooth functions throughout $\mathbb R^n \backslash \{0\}$ and have the following asymptotic behaviors as $|x|\to\infty $, there exist some constants $A$ and $B$  which depends on $\alpha, \theta$ and $n$, such that,
\begin{align}
J^\theta M_\alpha (x)
&
= A |x|^{\frac{n\alpha+2\alpha-n-\alpha \theta}{2-\alpha}}e^{i B |x|^\frac{2}{\alpha-2}}+o(|x|^{\frac{n\alpha+2\alpha-n-\alpha \theta}{2-\alpha}}),
\\
J^\theta N_\alpha (x)
&
= A |x|^{\frac{n\alpha+2(\alpha-1)-n-\alpha \theta}{2-\alpha}}e^{i B |x|^\frac{2}{\alpha-2}}+o(|x|^{\frac{n\alpha+2(\alpha-1)-n-\alpha \theta}{2-\alpha}}) ,
\\
J^\theta L_\alpha (x)
&
= A |x|^{\frac{n\alpha+2-n-\alpha \theta}{2-\alpha}}e^{i B |x|^\frac{2}{\alpha-2}}+o(|x|^{\frac{n\alpha+2-n-\alpha \theta}{2-\alpha}}) ,
\end{align}
and the following asymptotic behaviors as $|x|\to 0$, and
\begin{align}
|J^\theta M_\alpha(x)|\sim
\begin{cases}C, \quad &\text{when}\quad \theta>n,\\
C \log\frac{1}{|x|}, \quad &\text{when} \quad \theta=n,\\
	C |x|^{\theta-n}+o(|x|^{\theta-n}), \quad &\text{when}\quad \theta<n.
\end{cases}
\end{align}
\begin{align}
 |J^\theta N_\alpha(x)|, |J^\theta L_\alpha(x)|&\sim
\begin{cases}C, \quad &\text{when}\quad \theta>n-2,\\
C \log\frac{1}{|x|}, \quad &\text{when} \quad \theta=n+2,\\
	C |x|^{2+\theta-n}+o(|x|^{2+\theta-n}), \quad &\text{when}\quad \theta<n-2.
\end{cases}
\end{align}
In addition, if we denote $\theta_{{M_\alpha}}={\frac{n}{\alpha}+2}$, $\theta_{N_\alpha}={\frac{n}{\alpha}+2-\frac{2}{\alpha}}$, $ \theta_{L_\alpha}=\frac{n}{\alpha}+\frac{2}{\alpha} $, then we have
\begin{enumerate}
 \item if $\theta>\theta_{M_\alpha}$, then $J^{\theta}M_{\alpha}(x)\in L^1(\mathbb R^n)$;
 \item if $\theta>\theta_{N_\alpha}$, then $J^{\theta}N_{\alpha}(x)\in L^1(\mathbb R^n)$; and
    \item if $\theta>\theta_{L_\alpha}$, then $J^{\theta}L_{\alpha}(x)\in L^1(\mathbb R^n)$.
\end{enumerate}
\end{lem}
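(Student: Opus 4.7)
The plan is to recycle the proof of Lemma~\ref{asymptotic-behavior-of-kernel} almost verbatim, changing only the bookkeeping of exponents. I will give the argument for $J^\theta M_\alpha$ in detail; $J^\theta N_\alpha$ and $J^\theta L_\alpha$ are handled identically using the appropriate Mittag--Leffler expansion. First, fix a smooth cutoff $\Phi$ supported in the unit ball with $\Phi\equiv 1$ near the origin, and split
\[J^\theta M_\alpha(x)=M_{\alpha,\theta,1}(x)+M_{\alpha,\theta,2}(x),\]
corresponding to restricting $(1+|\xi|^2)^{-\theta/2}\widehat{M_\alpha}(\xi)$ against $\Phi$ and $1-\Phi$ respectively. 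The low-frequency piece $M_{\alpha,\theta,1}$ has compactly supported smooth integrand and is therefore Schwartz, giving $O(1)$ at the origin and rapid decay at infinity. For the high-frequency piece, I invoke the asymptotic expansion from Theorem~\ref{thm-asymptotic},
\[E_{\alpha,1}(i^{-\alpha}|\xi|^2)=\tfrac{1}{\alpha}e^{-i|\xi|^{2/\alpha}}-\sum_{k=1}^{m}\tfrac{i^{\alpha k}}{\Gamma(1-\alpha k)|\xi|^{2k}}+o(|\xi|^{-2m}),\]
so that after multiplying by $i^{-\alpha}|\xi|^2$ the high-frequency factor splits as a principal oscillatory term of order $|\xi|^{2-\theta}$ plus a non-oscillatory remainder $r_{\alpha,\theta}(\xi)\sim|\xi|^{-\theta}(1-\Phi(\xi))$ (the leading surviving term being $|\xi|^{2}\cdot|\xi|^{-2-\theta}$).

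Next I would handle the principal oscillatory piece using Corollary~\ref{equivalence of Riesz and Potential} with parameters $a=2/\alpha$ and $b=\theta-2$; a direct substitution produces the desired asymptotic $A|x|^{(n\alpha+2\alpha-n-\alpha\theta)/(2-\alpha)}e^{iB|x|^{2/(\alpha-2)}}$ as $|x|\to\infty$ plus a lower-order $o$-term. The remainder $r_{\alpha,\theta}$ is treated by repeated integration by parts against $e^{ix\cdot\xi}$, giving rapid decay as $|x|\to\infty$, whereas the $|x|\to 0$ singular behaviour follows from a dyadic Littlewood--Paley decomposition in frequency of $r_{\alpha,\theta}$ exactly as in Lemma~3.1 of~\cite{Su}: each annular piece contributes $|x|^{\theta-n}$ when $\theta<n$, a logarithm at the borderline $\theta=n$, and $O(1)$ when $\theta>n$. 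Summing the three contributions (low-frequency Schwartz part, principal oscillatory part, and remainder) produces the stated $|x|\to 0$ and $|x|\to\infty$ asymptotics for $J^\theta M_\alpha$.

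The $L^1$ statements are then a direct corollary of the tail exponent. For $J^\theta M_\alpha$ we need $(n\alpha+2\alpha-n-\alpha\theta)/(2-\alpha)<-n$, which rearranges to $\theta>n/\alpha+2=\theta_{M_\alpha}$; the $|x|\to 0$ singularity $|x|^{\theta-n}$ is integrable whenever $\theta>0$, which is automatic since $\theta_{M_\alpha}>0$. The arguments for $N_\alpha$ and $L_\alpha$ are structurally identical but based on the expansions of $E_{\alpha,2-\alpha}$ and $E_{\alpha,\alpha}$: the principal oscillatory orders become $|\xi|^{2(\alpha-1)/\alpha-\theta}$ and $|\xi|^{2/\alpha-\theta}$ respectively, yielding the $L^1$ thresholds $\theta_{N_\alpha}=n/\alpha+2-2/\alpha$ and $\theta_{L_\alpha}=n/\alpha+2/\alpha$, while the small-$|x|$ regimes shift to $|x|^{2+\theta-n}$ since the leading remainder coefficients are now $|\xi|^{-2-\theta}$ instead of $|\xi|^{-\theta}$.

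The main obstacle, as in Lemma~\ref{asymptotic-behavior-of-kernel}, is controlling the small-$|x|$ behaviour of the non-oscillatory remainder: once the explicit oscillatory principal term is subtracted the remaining Fourier multiplier decays only polynomially in $|\xi|$, so a single integration by parts is insufficient near the origin and one must carry out the dyadic analysis of~\cite{Su} to distinguish the polynomial, logarithmic, and bounded regimes according to the relative size of $\theta$ and $n$ (or $n-2$ for $N_\alpha,L_\alpha$). Everything else is a direct translation of the earlier proof.
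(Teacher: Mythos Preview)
Your proposal is correct and is exactly the approach the paper has in mind: the paper's own proof consists of the single sentence ``The proof is similar to Lemma~\ref{asymptotic-behavior-of-kernel}, thus we omit the details,'' and what you have written is precisely the translation of that earlier proof with the adjusted exponents. Your bookkeeping of the principal oscillatory orders ($b=\theta-2$, $\theta-2(\alpha-1)/\alpha$, $\theta-2/\alpha$) and of the leading remainder decays ($|\xi|^{-\theta}$ for $M_\alpha$, $|\xi|^{-2-\theta}$ for $N_\alpha,L_\alpha$) is accurate, and the $L^1$ thresholds are computed correctly.
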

\begin{proof} The proof is the similar to Lemma \ref{asymptotic-behavior-of-kernel}, thus we omit the details.	
\end{proof}
\begin{lem} For any fixed $t_0>0$, we have
\begin{enumerate}
	\item if $\theta> \theta_{M_\alpha}$, then there exist a constant $C>0$ such that for all  $t\in [\frac{t_0}{2}, \frac{3t_0}{2}]$,
	\[\|J^\theta M_\alpha(t,x)\|_{ L^{1}(\mathbb R^n)}\leq C.\]
   \item if $\theta> \theta_{N_\alpha}$, then there exist a constant $C>0$ such that for all  $t\in [\frac{t_0}{2}, \frac{3t_0}{2}]$,	\[\|J^\theta N_\alpha(t,x)\|_{ L^{1}(\mathbb R^n)}\leq C.\]
   \item if $\theta> \theta_{L_\alpha}$, then
hen there exist a constant $C>0$ such that for all  $t\in [\frac{t_0}{2}, \frac{3t_0}{2}]$,
	\[\|J^\theta L_\alpha(t,x)\|_{ L^{1}(\mathbb R^n)}\leq C.\]
\end{enumerate} 	\end{lem}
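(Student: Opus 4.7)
The plan is to imitate the argument used for the analogous corollary concerning $S_\alpha$, $Q_\alpha$, $P_\alpha$. The only real new input needed is pointwise bounds on $|J^\theta M_\alpha(t,x)|$, $|J^\theta N_\alpha(t,x)|$, $|J^\theta L_\alpha(t,x)|$ of the same type as those proved in Lemma \ref{Bessel potential estimate}. Once these are in hand, the $L^1$ bound on $[\tfrac{t_0}{2},\tfrac{3 t_0}{2}]$ follows from a single dominating function.

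First I would derive pointwise bounds. Using the stated scaling relations, change variables $\xi \mapsto t^{-\alpha/2}\xi$ in
\[
J^\theta M_\alpha(t,x) = \frac{1}{(2\pi)^{n/2}}\int_{\mathbb R^n} e^{i x\cdot \xi}(1+|\xi|^2)^{-\theta/2}\, i^{-\alpha}|\xi|^2 E_{\alpha,1}(i^{-\alpha}t^\alpha|\xi|^2)\dd{\xi}
\]
and split the integrand with a smooth cutoff $\Phi$ supported in the unit ball, giving pieces $I_1(t,x)+I_2(t,x)$ exactly as in the proof of Lemma \ref{Bessel potential estimate}. For $I_1$, use the entirety of $E_{\alpha,1}$ (and the extra $|\xi|^2$ factor) together with repeated integration by parts against $\mathcal D=-i|xt^{-\alpha/2}|^{-1}\sum_j x_j t^{-\alpha/2}\partial_j$ to derive the analogue of \eqref{claim for part 1}. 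For $I_2$, split $E_{\alpha,1}(i^{-\alpha}|\xi|^2)$ via \eqref{asymptotic-of-ml-infity} into its leading oscillatory part $\tfrac{1}{\alpha}e^{-i|\xi|^{2/\alpha}}$ and a remainder that decays like $|\xi|^{-2}$. The oscillatory part, after absorbing the extra $|\xi|^2$ and restoring the inhomogeneous factor via Corollary \ref{equivalence of Riesz and Potential}, falls into the framework of Theorem \ref{kernel-asymptotic} with parameters $(a,b) = (\tfrac{2}{\alpha},\theta-2)$, producing the asymptotic behavior stated in the preceding lemma; the remainder can be handled exactly as $S_{\alpha,\theta,22}$ was. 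The same routine applies to $N_\alpha$ and $L_\alpha$, with the extra polynomial power $|\xi|^{2(\alpha-1)/\alpha}$ or $|\xi|^{2/\alpha}$ respectively on the Fourier side being absorbed into the Miyachi-type multiplier bound.

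Second, I would assemble an $L^1$ majorant. The pointwise bounds produced in the previous step have the same form as in Lemma \ref{Bessel potential estimate}, namely a singularity at the origin governed by $|x|^{\theta-n+\mu}$ (with $\mu=-2,-2+\tfrac{2}{\alpha},-\tfrac{2}{\alpha}$ respectively for $M_\alpha,N_\alpha,L_\alpha$) and an oscillatory tail at infinity controlled by $|x|^{-n-\sigma}$ with $\sigma>0$ precisely because $\theta>\theta_{M_\alpha},\theta_{N_\alpha},\theta_{L_\alpha}$. Mimicking the construction in the corollary just before Section 4, I define, for $t_0 \le 2$,
\[
M_{t_0}(x)=
\begin{cases} C\max\{1,|x|^{\theta-n+\mu}\}, & |x|\leq (t_0/2)^{\alpha/2},\\ C\max\{1, t_0^{\alpha(\theta-n+\mu)/2}\}, & (t_0/2)^{\alpha/2}\leq |x|\leq 1,\\ C|x|^{-n-\sigma}, & |x|\geq 1,\end{cases}
\]
and analogously for $t_0\geq 2$; each $M_{t_0}$ is integrable by the two inequalities $\theta-n+\mu>-n$ (i.e.\ $\theta>-\mu$) and $\sigma>0$, which hold in our ranges.

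Finally, the pointwise bounds give $|J^\theta M_\alpha(t,x)|\leq M_{t_0}(x)$ for all $t\in[\tfrac{t_0}{2},\tfrac{3t_0}{2}]$, from which
\[
\|J^\theta M_\alpha(t,\cdot)\|_{L^1(\mathbb R^n)}\leq \|M_{t_0}\|_{L^1(\mathbb R^n)}\leq C,
\]
uniformly in $t\in[\tfrac{t_0}{2},\tfrac{3t_0}{2}]$, and similarly for $N_\alpha,L_\alpha$. The main technical obstacle is the bookkeeping in the first step: tracking the extra $|\xi|^2$ or $|\xi|^{2(\alpha-1)/\alpha}$ factor and the correct powers of $t$ through the change of variables, the split into low/high frequency, and the final matching with Corollary \ref{equivalence of Riesz and Potential}. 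The rest is then a direct transcription of the $S_\alpha$ argument.
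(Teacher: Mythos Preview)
Your proposal is correct and follows essentially the same approach as the paper: the paper's proof simply states that it is ``similar to Lemma \ref{Bessel potential estimate}'' and omits the details, and what you have written is exactly an elaboration of that---derive pointwise bounds for $J^\theta M_\alpha(t,x)$, $J^\theta N_\alpha(t,x)$, $J^\theta L_\alpha(t,x)$ by the same change of variables, low/high frequency split, and Miyachi-type asymptotics, then build a single $L^1$ majorant on $[\tfrac{t_0}{2},\tfrac{3t_0}{2}]$ as in the corollary following Lemma \ref{Bessel potential estimate}. Your bookkeeping of the shifts $\mu$ and the integrability checks $\theta>-\mu$, $\sigma>0$ is accurate.
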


\begin{proof}
	The proof is similar to Lemma \ref{Bessel potential estimate} and omit the details.
\end{proof}

\begin{prop} Assume that $u_0\in C^{\frac{n}{\alpha}+2+\epsilon}(\mathbb R^n)$, $u_1\in C^{\frac{n}{\alpha}+2-\frac{2}{\alpha}+\epsilon}(\mathbb R^n)$ and $f\in C^0_t((0,\infty); C^{\frac{n+2}{\alpha}+\epsilon}(\mathbb R^n))$, then $I_t^{2-\alpha}u'(t,x)$ is continuous differentiable with respect to $t>0$.
\begin{proof} If $u_0\in C^{\frac{n}{\alpha}+2+\epsilon}(\mathbb R^n)$, then $J^{-(\frac{n}{\alpha}+2+\epsilon)}u_0(x)\in L^{\infty}(\mathbb R^n)\cap C(\mathbb R^n)$. For any fixed $t_0>0$ $J^{\frac{n}{\alpha}+2+\epsilon}M_\alpha(t,x) \in L^1(\mathbb R^n)$ for all $t\in[t_0/2, 3t_0/2]$. By the dominated convergence theorem, we can change the limit and the integral, thus the following equality is valid,
	  $$\frac{\partial }{\partial t}I_t^{2-\alpha}u_{0}'(t,x)\big |_{t=t_0}=J^{\frac{n}{\alpha}+2+\epsilon}M_\alpha(t_0,\cdot)*J^{-\frac{n}{\alpha}+2+\epsilon}u_0(x)=M_\alpha(t_0,\cdot)*u_0(x),$$
	  thus $\frac{\partial }{\partial t}I_t^{2-\alpha}u_{0}'(t,x)$ is continuous at every $t_0>0$.
	
	   That is, for all $x\in \mathbb R^n$, $I_t^{2-\alpha}u_{0}'(t,x)$ is continuously differentiable with respect to $t>0$.
	  Similarly, $x\in \mathbb R^n$, $I_t^{2-\alpha}u_{1}'(t,x)$ and $I_t^{2-\alpha}u_{f}'(t,x)$ is continuously differentiable with respect to $t>0$. Thus the statement is valid.
\end{proof}
	
\end{prop}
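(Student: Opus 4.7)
The plan is to use the decomposition displayed just before the proposition,
$$\partial_t I_t^{2-\alpha}u'=\partial_t I_t^{2-\alpha}u_0'+\partial_t I_t^{2-\alpha}u_1'+\partial_t I_t^{2-\alpha}u_f',$$
and show that each summand, written as a convolution against $M_\alpha$, $N_\alpha$, or $L_\alpha$ respectively, is well-defined pointwise and continuous in $t$ on $(0,\infty)$. The common trick is to insert a cancelling pair of Bessel potentials, replacing the rough kernel by $J^\theta(\mathrm{kernel})\in L^1$ paired against a bounded continuous function $J^{-\theta}(\mathrm{data})$.

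For the $u_0$-piece, fix $t_0>0$ and set $\theta:=\tfrac{n}{\alpha}+2+\epsilon$, which is strictly larger than the threshold $\theta_{M_\alpha}=\tfrac{n}{\alpha}+2$. By Proposition \ref{Bessel map}, $J^{-\theta}u_0\in C^0(\mathbb R^n)$ and is therefore bounded and uniformly continuous. By the preceding corollary, $J^\theta M_\alpha(t,\cdot)\in L^1(\mathbb R^n)$ with $L^1$-norm bounded uniformly for $t\in[t_0/2,3t_0/2]$. The identity
$$M_\alpha(t,\cdot)*u_0=\bigl(J^\theta M_\alpha(t,\cdot)\bigr)*\bigl(J^{-\theta}u_0\bigr)$$
then realizes the $u_0$-piece as an everywhere-defined continuous function of $x$. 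Continuity in $t$ at $t_0$ follows from Lebesgue dominated convergence applied to the $y$-integral: pointwise convergence of the integrand uses the smoothness of $M_\alpha$ and its scaling relation in $(t,x)$, while the dominating $L^1$-function in $y$ comes from the uniform $L^1$-bound on $[t_0/2,3t_0/2]$. The argument for the $u_1$-piece is identical with $\theta$ replaced by $\tfrac{n}{\alpha}+2-\tfrac{2}{\alpha}+\epsilon>\theta_{N_\alpha}$.

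The main obstacle is the forcing piece
$$\partial_t I_t^{2-\alpha}u_f'(t,x)=f(t,x)+\int_0^t\bigl(L_\alpha(t-\tau,\cdot)*f(\tau,\cdot)\bigr)(x)\,d\tau,$$
because the additional $\tau$-integration pushes the kernel towards its singular point $s=t-\tau\downarrow 0$. I apply the same Bessel potential factorization with $\theta':=\tfrac{n+2}{\alpha}+\epsilon>\theta_{L_\alpha}$, writing the integrand as $\bigl(J^{\theta'}L_\alpha(t-\tau,\cdot)\bigr)*\bigl(J^{-\theta'}f(\tau,\cdot)\bigr)$; the hypothesis $f\in C_t^0\!\bigl((0,\infty);C^{\theta'}(\mathbb R^n)\bigr)$ ensures that the second factor is continuous in $\tau$ and uniformly bounded on compact $\tau$-subsets of $(0,\infty)$. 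The key quantitative input is a bound on $\|J^{\theta'}L_\alpha(s,\cdot)\|_{L^1}$ as $s\downarrow 0$ that is locally integrable in $s$; this requires rerunning the split-and-integrate-by-parts scaling analysis used in Lemma \ref{Bessel potential estimate} in the regime $s<1$ for $L_\alpha$ in place of $S_\alpha$, the strict inequality $\theta'>\theta_{L_\alpha}$ being precisely what produces a positive $\sigma$ and hence a power of $s$ better than $s^{-1}$. Granting this bound, dominated convergence yields continuity in $t$ of the $\tau$-integral, and the isolated summand $f(t,x)$ is continuous in $t$ directly by hypothesis. Combining the three pieces gives continuous differentiability of $I_t^{2-\alpha}u'$ on $(0,\infty)$.
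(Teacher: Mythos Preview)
Your argument for the $u_0$- and $u_1$-pieces is exactly the paper's: insert a cancelling pair of Bessel potentials, then invoke the uniform-in-$t$ $L^1$ bound on $J^\theta M_\alpha$ (resp.\ $J^\theta N_\alpha$) over $[t_0/2,3t_0/2]$ together with dominated convergence.

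Where you differ is the forcing term. The paper disposes of $u_f$ with the single word ``Similarly''; you correctly observe that this hides a genuine issue, since the $\tau$-integral in
\[
\int_0^t \bigl(J^{\theta'}L_\alpha(t-\tau,\cdot)\bigr)*\bigl(J^{-\theta'}f(\tau,\cdot)\bigr)\,d\tau
\]
runs through $s=t-\tau\downarrow 0$, so one needs not merely a uniform bound on $\|J^{\theta'}L_\alpha(s,\cdot)\|_{L^1}$ over compact subsets of $(0,\infty)$ (which is all the preceding lemma supplies) but local integrability in $s$ near $0$. Flagging this and proposing to rerun the Lemma~\ref{Bessel potential estimate} analysis for $L_\alpha$ in the regime $s<1$ is the right move.

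One imprecision: your explanation that ``positive $\sigma$'' is what yields a power of $s$ better than $s^{-1}$ conflates two roles. The exponent $\sigma>0$ (equivalent to $\theta'>\theta_{L_\alpha}$) controls the \emph{spatial} decay $|x|^{-n-\sigma}$ of $J^{\theta'}L_\alpha(s,x)$ at infinity, uniformly in $s$; it is what makes $J^{\theta'}L_\alpha(s,\cdot)\in L^1$ at all. The favourable power of $s$ comes instead from the scaling: on the high-frequency side (where Bessel $\approx$ Riesz) the substitution $\eta=s^{\alpha/2}\xi$ gives $\|J^{\theta'}L_\alpha(s,\cdot)\|_{L^1}\lesssim s^{\alpha\theta'/2-1}$, and $\alpha\theta'/2>(n+2)/2\ge 1$ makes this exponent nonnegative; on the low-frequency side the explicit prefactor $s^{\alpha-1}$ in $\widehat L_\alpha$ (with $\alpha>1$) does the job. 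Either observation secures the local integrability you need, so your conclusion is sound even though the stated mechanism is slightly off.
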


{Conclusion}: If $u_0(x)\in C^{\frac{n}{\alpha}+2+\epsilon}(\mathbb R^n), u_1(x) \in  C^{\frac{n}{\alpha}+2-\frac{2}{\alpha}++\epsilon}(\mathbb R^n)$ and $f\in C^0_t((0,\infty); C^{\frac{n}{\alpha}+\frac{2}{\alpha}+\epsilon}(\mathbb R^n))$, then for each $x\in \mathbb R^n$, $I_t^{2-\alpha}u'(t,x)$ is continuously differentiable with respect to $t$.

\subsection{Pointwise convergence to the initial data}
In this  subsection, we restrict the time $t$ to $0<t<1$, since we only discuss the behavior $u(t,x)$ as $t\to 0$.
 In order to demonstrated the validity of the pointwise convegence, we give the following `almost finite speed propagation' property. Let us denote $\Phi_R(x)$ be a smooth function equal to $1$ in $B(0, R)$ and $0$ in $B(0, R+1)^c$.

%

\begin{lem}[Mismatch estimates]\label{mismatch estimate} Assume that $0<t<1$, and $\varphi(x)\in C^{\theta}(\mathbb R^n)$.
 \begin{enumerate}
 	\item If $\theta>\theta_{S_{\alpha}}$,  then for any $\delta >0$, there exist a number $R_\delta>1$, such that
 	\begin{equation}
	\big\|\chi_{B(0,1)}(x)\int_{\mathbb R^n}J^\theta S_\alpha(t,x-y)(1-\Phi_{R_\delta}(y))J^{-\theta}\varphi(y)\dd{y}\big\|_{L^\infty(\mathbb R^n)}\leq\delta \|J^{-\theta}\varphi(x)|_{L^\infty(\mathbb R^n)}.
\end{equation} 	
\item If $\theta>\theta_{Q_{\alpha}}$, then for any $\delta >0$, there exist a number $R_\delta>1$, such that
\begin{equation}
	\big\|\chi_{B(0,1)}(x)\int_{\mathbb R^n}J^\theta Q_\alpha(t,x-y)(1-\Phi_{R_\delta}(y))J^{-\theta}\varphi(y)\dd{y}\big\|_{L^\infty(\mathbb R^n)}\leq\delta \|J^{-\theta}\varphi(x)|_{L^\infty(\mathbb R^n)}.
\end{equation}
 \item If $\theta>\theta_{P_{\alpha}}$, then for any $\delta >0$, there exist a number $R_\delta>1$, such that\begin{equation}
	\big\|\chi_{B(0,1)}(x)\int_{\mathbb R^n}J^\theta P_\alpha(t,x-y)(1-\Phi_{R_\delta}(y))J^{-\theta}\varphi(y)\dd{y}\big\|_{L^\infty(\mathbb R^n)}\leq\delta \|J^{-\theta}\varphi(x)|_{L^\infty(\mathbb R^n)}.
	\end{equation}
 \end{enumerate}
 \end{lem}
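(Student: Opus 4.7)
The plan is to reduce all three statements to a single pointwise tail estimate for the kernels $J^\theta S_\alpha(t,\cdot)$, $J^\theta Q_\alpha(t,\cdot)$, $J^\theta P_\alpha(t,\cdot)$ that is uniform in $t\in(0,1)$, and then use the boundedness of $J^{-\theta}\varphi$ to reduce the convolution to a scalar integral of the kernel over the exterior of a ball.

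First, by Proposition \ref{Bessel map}, the hypothesis $\varphi\in C^\theta(\mathbb R^n)$ gives $J^{-\theta}\varphi\in C^0(\mathbb R^n)\subset L^\infty(\mathbb R^n)$, with $\|J^{-\theta}\varphi\|_{L^\infty}\lesssim \|\varphi\|_{C^\theta}$. Next, I would invoke the tail bounds from Lemma \ref{Bessel potential estimate}. For the $S_\alpha$ case, since $\theta>\theta_{S_\alpha}$, the estimate \eqref{estimate for S for small t} supplies
$$|J^\theta S_\alpha(t,z)|\leq C|z|^{-n-\sigma}\quad\text{for all }|z|\geq 1,\ t\in(0,1),$$
with $\sigma=\frac{\theta\alpha-n}{2-\alpha}>0$ and $C$ independent of $t$. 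The analogous bounds hold for $J^\theta Q_\alpha$ and $J^\theta P_\alpha$ under the respective hypotheses, with strictly positive exponents $\sigma=\frac{\theta\alpha+2-n}{2-\alpha}$ and $\sigma=\frac{\theta\alpha+2\alpha-2-n}{2-\alpha}$.

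Now for $x\in B(0,1)$ and $y$ in the support of $1-\Phi_R$ with $R\geq 2$, the triangle inequality gives $|x-y|\geq |y|-1\geq |y|/2\geq 1$, so the arguments fall in the favorable regime of the tail bound above. Combining this with H\"older's inequality in the trivial $L^1$–$L^\infty$ form,
\begin{align*}
\left|\int_{\mathbb R^n}J^\theta S_\alpha(t,x-y)(1-\Phi_R(y))J^{-\theta}\varphi(y)\dd{y}\right|
&\leq C\|J^{-\theta}\varphi\|_{L^\infty}\int_{|y|\geq R}|y|^{-n-\sigma}\dd{y}\\
&\leq C' R^{-\sigma}\|J^{-\theta}\varphi\|_{L^\infty},
\end{align*}
uniformly in $x\in B(0,1)$ and $t\in(0,1)$. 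Since $\sigma>0$, one simply takes $R_\delta$ so that $C'R_\delta^{-\sigma}\leq\delta$, which yields the claim for $S_\alpha$. The proofs for $Q_\alpha$ and $P_\alpha$ are identical modulo substituting the corresponding estimates from Lemma \ref{Bessel potential estimate}.

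The only delicate point is verifying that the bound on $|J^\theta S_\alpha(t,\cdot)|$ in the region $|z|\geq 1$ is genuinely $t$-independent; this is precisely the content of the third branch of \eqref{estimate for S for small t}, which is why the strategy of localizing $x$ to the unit ball and $y$ to the exterior of a large ball is the right one: it forces the argument of the kernel into the regime where the worst case in $t$ has already been absorbed into the constant $C$. No additional regularity on $\varphi$ beyond $C^\theta$ is required.
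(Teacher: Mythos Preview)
Your proposal is correct and follows essentially the same approach as the paper: both arguments localize $x$ to $B(0,1)$ and $y$ to the exterior of a large ball so that $|x-y|$ lands in the tail regime $|z|\gtrsim 1$, then invoke the uniform-in-$t$ decay from Lemma~\ref{Bessel potential estimate} together with $J^{-\theta}\varphi\in L^\infty$. The only cosmetic difference is that the paper phrases the tail control as ``$J^\theta S_\alpha(t,\cdot)\in L^1(\mathbb R^n\setminus B(0,1))$ uniformly in $0<t<1$, hence $\int_{|z|\geq R_\delta-1}|J^\theta S_\alpha(t,z)|\dd z\leq\delta$,'' whereas you use the explicit pointwise bound $C|z|^{-n-\sigma}$ and compute $R^{-\sigma}$ directly; these are equivalent.
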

\begin{proof}We give the proof for $S_\alpha (t,x)$, the others are analogous. By Lemma \ref{Bessel potential estimate}, $J^{\theta}S_{\alpha}(t,x) \in L^1(\mathbb R^n \setminus B(0,1))$ for all $0<t<1$. That is, for any $\delta >0$, there exist a number $R_\delta >1$ big enough such that for any $0<t<1$,
\begin{align*}
	\int_{|x|\geq R_\delta-1} \left|J^{\theta}S_{\alpha}(t,x)\right|\dd{x}\leq \delta.
\end{align*}
When $\varphi(x)\in C^{\theta}(\mathbb R^n)$, we have
  \begin{equation*}
  	\|((1-\Phi_{R_\delta}(x))J^{-\theta}\varphi(x) )\|_{L^\infty(\mathbb R^n)}\leq \|J^{-\theta}\varphi(x) \|_{L^\infty(\mathbb R^n)}< \infty.
  \end{equation*}
Since we have $x\in B(0,1)$, $y\in B(0, {R_\delta})^c$, then $x-y \in B(0, {R_\delta}-1)^c$, thus
\begin{align*}
	& |\chi_{B(0,1)}(x)\int_{\mathbb R^n}J^\theta S_\alpha(t,x-y)(1-\Phi_{R_\delta}(y))J^{-\theta}\varphi(y)\dd{y}|\\
	\leq &\|((1-\Phi_{R_\delta})(y)J^{-\theta}\varphi(y))\|_{L^\infty (\mathbb R^n)}\int_{|y|\geq {R_\delta}-1}\left|J^{\theta} S_{\alpha}(t,y)\right|\dd{y}\\
	\leq &\delta \|J^{-\theta}\varphi(x)\|_{L^\infty(\mathbb R^n)}.
\end{align*}
Thus we proved the lemma.
\end{proof}

\begin{prop} If $u_0 \in C^{2+\frac{n}{\alpha}+\epsilon}(\mathbb R^n)$, $u_1 \in C^{2+\frac{n}{\alpha}-\frac{2}{\alpha} +\epsilon}(\mathbb R^n)$ and $f\in C_t^{0}((0,\infty); C^{\frac{n}{\alpha}+\frac{2}{\alpha}+\epsilon}(\mathbb R^n))$, then the mild solution $u(t,x)$ pointwise convergence to the initial data, that is $\lim\limits_{t\to 0}u(t,x)=u_0(x)$ and $\lim\limits_{t\to 0}\partial_t u(t,x)=u_1(x)$.	
\end{prop}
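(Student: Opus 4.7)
The claim decomposes into six pointwise limits as $t \to 0^+$: $u_0(t,x) \to u_0(x)$, $u_1(t,x) \to 0$, $u_f(t,x) \to 0$, together with the three analogues for $\partial_t u$. By linearity and the parallel structure of the arguments, I sketch only the prototype $(S_\alpha(t,\cdot) * u_0)(x_0) \to u_0(x_0)$ at a fixed $x_0 \in \mathbb{R}^n$; by translation invariance, reduce to $x_0 = 0$. The other five limits follow by identical reasoning after substituting $S_\alpha$ with the appropriate one of $Q_\alpha, P_\alpha, M_\alpha, N_\alpha, L_\alpha$, replacing $\theta_{S_\alpha}$ with the corresponding threshold index, and invoking the analogue of Lemma~\ref{mismatch estimate} for the $\partial_t$-kernels.

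The central device is a Bessel-potential reformulation coupled with the mismatch estimate. Pick $\theta \in (\theta_{S_\alpha},\, 2 + n/\alpha + \epsilon)$; then by Proposition~\ref{Bessel map}, $J^{-\theta} u_0 \in C^{\epsilon'}(\mathbb{R}^n) \subset L^\infty$ with $\epsilon' := 2 + n/\alpha + \epsilon - \theta > 0$, and by Lemma~\ref{Bessel potential estimate}, $J^\theta S_\alpha(t,\cdot) \in L^1(\mathbb{R}^n \setminus B(0,1))$ uniformly in $t \in (0,1)$. Using $J^\theta \delta = G^\theta$, write
\[(S_\alpha(t,\cdot) * u_0)(0) - u_0(0) = \int_{\mathbb{R}^n}\bigl[J^\theta S_\alpha(t,-y) - G^\theta(-y)\bigr]\, J^{-\theta}u_0(y)\, dy.\]
Given $\delta > 0$, pick $R = R_\delta$ from Lemma~\ref{mismatch estimate} (for this $\theta$), enlarged so that $\int_{|y| > R - 1} G^\theta(y)\, dy < \delta$ using the exponential decay of $G^\theta$. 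Splitting the integrand by the cutoff $\Phi_R$ into a near ($|y| \le R$) and a tail ($|y| > R$) piece, the tail contribution is bounded by $C \delta \|J^{-\theta} u_0\|_{L^\infty}$ uniformly in $t \in (0,1)$.

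For the near piece, set $v := \Phi_{R_\delta} J^{-\theta} u_0 \in C^{\epsilon'}_c(\mathbb{R}^n)$; with $\theta$ chosen just above $\theta_{S_\alpha}$ so that $\epsilon' > 2$, one has $v \in C^2_c$, hence $|\hat v(\xi)| \lesssim (1+|\xi|^2)^{-1}$. By Parseval together with $\widehat{J^\theta S_\alpha}(t,\xi) = (1+|\xi|^2)^{-\theta/2} E_{\alpha,1}(i^{-\alpha}t^\alpha|\xi|^2)$ and $\widehat{G^\theta}(\xi) = (1+|\xi|^2)^{-\theta/2}$, the near piece is (up to a universal constant)
\[\int_{\mathbb{R}^n}(1+|\xi|^2)^{-\theta/2}\bigl[E_{\alpha,1}(i^{-\alpha}t^\alpha |\xi|^2) - 1\bigr]\, \hat v(\xi)\, d\xi.\]
The bracket is uniformly bounded by Theorem~\ref{thm-asymptotic} and tends to $0$ pointwise as $t \to 0^+$, while the full integrand is dominated by $C(1+|\xi|^2)^{-\theta/2 - 1}$, which is integrable when $\theta > n - 2$. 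Dominated convergence then sends the near piece to $0$; combining with the tail bound and letting $\delta \to 0$ concludes the prototype limit.

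The main obstacle is verifying the integrability hypothesis $\theta > n - 2$ in the near-piece dominated convergence: with $\theta$ just above $n/\alpha$, this condition reduces to the numerical constraint $n(\alpha-1)/\alpha < 2 + \epsilon$, which holds in all dimensions for $\alpha$ close to $1$ but fails in high dimensions when $\alpha$ is close to $2$. In the borderline regime, one exploits the vanishing zeroth and first moments of $J^\theta S_\alpha(t,\cdot) - G^\theta$ (both kernels are radial, and both integrate to the same value because $\widehat{J^\theta S_\alpha}(t,0) = \widehat{G^\theta}(0) = 1$) to subtract the first-order Taylor polynomial of $J^{-\theta}u_0$ at the origin before estimating; the residual is of order $|y|^{2+\epsilon'}$ and is controlled by the kernel bounds of Lemma~\ref{Bessel potential estimate}, while the quadratic piece contributes $O(t^\alpha)$ via the local expansion $E_{\alpha,1}(i^{-\alpha}t^\alpha|\xi|^2) = 1 + \tfrac{i^{-\alpha}t^\alpha|\xi|^2}{\Gamma(1+\alpha)} + \cdots$ near $\xi = 0$.
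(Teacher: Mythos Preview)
Your overall architecture---split into a far piece controlled by the mismatch estimate (Lemma~\ref{mismatch estimate}) and a near piece with compactly supported $J^{-\theta}u_0$---matches the paper exactly. The divergence is in how the near piece is handled. The paper does \emph{not} attempt an $L^1$ Fourier-side dominated convergence as you do; instead it observes that once $v=\Phi_{R_\delta}J^{-\theta}u_0\in C^0_c(\mathbb R^n)\subset L^2(\mathbb R^n)$, one has $J^{\theta}v\in H^{\theta}(\mathbb R^n)$, and since $\theta>n/\alpha>n/2$ the Sobolev embedding $H^{\theta}\hookrightarrow L^\infty$ applies. The limit then follows immediately from
\[
\|u_0(t,\cdot)-u_0\|_{L^\infty}\le C\|(E_{\alpha,1}(i^{-\alpha}t^\alpha|\xi|^2)-1)(1+|\xi|^2)^{\theta/2}\widehat{u_0}\|_{L^2}\to 0
\]
by dominated convergence in $L^2$ with the \emph{square-integrable} majorant $(1+|\xi|^2)^{\theta/2}|\widehat{u_0}|$. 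This works uniformly in $n$ and $\alpha$ with no side condition.

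Your $L^1$ route genuinely breaks down in the regime you flag, and the moment-cancellation patch does not close the gap. Two concrete issues: (i) the Taylor remainder after subtracting the first-order polynomial of a $C^{\epsilon'}$ function is $O(|y|^{\min(2,\epsilon')})$, not $O(|y|^{2+\epsilon'})$; (ii) even with a quadratic remainder, you then need $\int |J^{\theta}S_\alpha(t,y)|\,|y|^2\,dy$ to be finite and to tend to $0$, but the near-origin bound $|J^\theta S_\alpha(t,y)|\lesssim |y|^{\theta-n}$ from Lemma~\ref{asymptotic-behavior-of-kernel} makes $|y|^{\theta-n+2}$ locally integrable only when $\theta>n-2$, which is precisely the condition you were trying to circumvent. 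The $L^2$/Sobolev device in the paper sidesteps this entirely because it never needs the Fourier integrand to lie in $L^1$.
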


\begin{proof}It is easy to notice that $\widehat{S}_{\alpha}(t,\xi), \widehat{Q}_{\alpha}(t,\xi), \widehat{P}_{\alpha}(t,\xi)$ are continuous function at $t=0$, and $\widehat{S}_{\alpha}(0,\xi)=1, \widehat{Q}_{\alpha}(0,\xi)=0, \widehat{P}_{\alpha}(0,\xi)=0$.\\
{\bf{Step 1:}} First we claim that if $J^{-\theta}u_0$ has compact support, the pointwise convergence is valid.
This is because if $J^{-\theta}u_0\in C^0_c(\mathbb R^n)$ with $\theta>\frac{
n}{\alpha}>\frac{n}{2}$, then $u_0\in H^{\theta}(\mathbb R^n)\hookrightarrow L^\infty(\mathbb R^n)$. Also we have
\[|\widehat u_0(t,\xi)|= |\widehat u_0(\xi)E_\alpha (i^{-\alpha}t^\alpha |\xi|^2)|\leq C |\widehat u_0(\xi)|,\]
thus, $u_0(t,x)\in H^{\theta}(\mathbb R^n)\hookrightarrow L^\infty(\mathbb R^n)$. Therefore,
\begin{align*}
	\|u_0(t,x)-u_0(x)\|_{L^\infty(\mathbb R^n)}&\leq \|u_0(t,x)-u_0(x)\|_{H^{\theta}(\mathbb R^n)}\\
	&\leq \|(E_\alpha (i^{-\alpha}t^\alpha |\xi|^2)-1)(1+4\pi|\xi|^2)^{\theta/2}\widehat u_0(\xi)\|_{L^2}\\
	&\to 0 \quad \text{as} \quad t\to 0.
\end{align*}
 In other words, if we assume that $\operatorname {supp}  (J^{-\theta}u_0)\subset B(0, R)$ for a fixed $R>0$. Then for any $\delta>0$, there exist $0<t_\delta<1$ such that for any $0<t<t_\delta$ and $x\in \mathbb R^n$, we have
\begin{align}
	|u_0(t,x)-u_0(x)|
	&=\left|\int_{\mathbb R^n} J^\theta S_{\alpha}(t, x-y) \Phi_{R}(y) J^{-\theta }u_0(y)\dd{y}-J^\theta (\Phi_R (x)J^{-\theta}u_0(x)) \right|\nonumber\\
	&\leq \frac{\delta}{2}.
\end{align}
{\bf{Step 2:}} For general $u_0\in C^{\theta}(\mathbb R^n)$ with $\theta>\frac{n}{\alpha}$, we only need to prove that for any $x\in B(0,1)$, $u_0(t,x)\to u_0(x)$, which is equivalent to $\chi_{B(0,1)} u_0(t,x)\to \chi_{B(0,1)}u_0(x)$.
For any $\delta>0$, due to Lemma \ref{mismatch estimate} and the integrability of $G^\theta(x)$, there exist a $R_\delta >1$ such that for any $0<t<1$,
\begin{align}\label{outside a ball}
	\big|\chi_{B(0,1)}(x)\int_{\mathbb R^n}J^\theta S_\alpha(t,x-y)(1-\Phi_{R_\delta}(y))J^{-\theta}u_0(y)\dd{y}\big|\leq\frac{\delta}{2},
	\end{align}
and
	\begin{align}\label{mismatch for Bessel potential}
		\left|\chi_{B(0,1)}(x) \int_{\mathbb R^n} G^\theta (x-y)(1-\Phi_{R_\delta}(y))J^{-\theta}u_0(y))\dd{y}\right|\leq \frac{\delta}{4}.
	\end{align}	
For the above $R_\delta$, since $\Phi_{R_\delta}J^{-\theta}u_0$ is compactly supported, then by the step 1, there exists $t_\delta$ such that for any $0<t<t_\delta$, we have
\begin{align}\label{inside a ball}
\left|\int_{\mathbb R^n}J^\theta S_\alpha(t,x-y)(\Phi_{R_\delta}(y)J^{-\theta}u_0(y))\dd{y}-J^\theta (\Phi_{R_\delta} (x)J^{-\theta}u_0(x))\right|\leq\frac{\delta}{4}.
\end{align}
Combing \eqref{outside a ball}, \eqref{mismatch for Bessel potential} and \eqref{inside a ball}, we have
\begin{align}
	&\big|\chi_{B(0,1)}(x)u_0(t,x) -\chi_{B(0,1)}(x)u_0(x))\big|\nonumber\\
	=&\big|\chi_{B(0,1)}(x)\int_{\mathbb R^n}J^\theta S_\alpha(t,x-y)(\Phi_{R_\delta}(y)J^{-\theta}u_0(y))\dd{y}\nonumber\\
	 &+\chi_{B(0,1)}(x)\int_{\mathbb R^n}J^\theta S_\alpha(t,x-y)\big((1-\Phi_{R_\delta}(y))J^{-\theta}u_0(y)\big)\dd{y}\nonumber\\
	&- \chi_{B(0,1)}(x) J^\theta (\Phi_{R_\delta}(x)J^{-\theta}u_0(x))-\chi_{B(0,1)}(x) J^\theta \big((1-\Phi_{R_\delta}(x))J^{-\theta}u_0(x)\big)\nonumber\big|\\
	\leq &\left|\chi_{B(0,1)}(x)\left(\int_{\mathbb R^n}J^\theta S_\alpha(t,x-y)(\Phi_{R_\delta}(y)J^{-\theta}u_0(y))\dd{y}- J^\theta (\Phi_{R_\delta}(x)J^{-\theta}u_0(x))\right)\right|\nonumber\\
	&+\left| \chi_{B(0,1)}(x)\int_{\mathbb R^n}J^\theta S_\alpha(t,x-y)\big((1-\Phi_{R_\delta}(y))J^{-\theta}u_0(y)\big)\dd{y}\right|\nonumber\\
	&+\big|\chi_{B(0,1)}(x) J^\theta \big((1-\Phi_{R_\delta}(x))J^{-\theta}u_0(x)\big)\big|\nonumber\\
	\leq & \frac{\delta}{4}+\frac{\delta}{2}+\frac{\delta}{4}=\delta.\nonumber
\end{align}
that is  $\lim\limits_{t\to 0}u_0(t,x)= u_0(x)$ for any $x\in B(0,1)$.

Similarly, we can prove that $\lim_{t\to 0}u_1(t,x)=0$ and $\lim_{t\to 0}u_f(t,x)=0$ under our assumptions. Therefore
\[\lim_{t\to 0}u(t,x)=u_0(x)\]

{\bf{Step 3:}} By direct calculation, we have
\begin{align*}
	\partial_t u(t,x)= P_{\alpha}(t,x)*u_0(x)+S_{\alpha}(t,x)*u_1(t,x)+\int_{0}^t \widehat H(t-\tau, \xi)\widehat f(\tau, \xi)\dd{\xi},
\end{align*}
where $\widehat H(t-\tau, \xi)=i^{-\alpha}t^{\alpha-2}E_{\alpha,\alpha-1}(i^{-\alpha}t^\alpha |\xi|^2)$. Notice that $\widehat H(t-\tau, \xi)$ is locally $L^1$ as a function of $t$, using the same argument in step 1, we can prove that under our assumption,
\[\lim_{t\to 0}\partial_t u(t,x)=u_1(x).\]
Thus we verifies the proposition.
\end{proof}

\section{The Schr\"odinger equation}

In this section, we discuss the H\"older regularity for the  Schr\"odinger equation and demonstrate that Theorem \ref{main-result-2} is valid.
 \begin{defn}The mild solution \eqref{se} is given by
 \begin{align}\label{sms}
 	u(t,x)&=S_1(t,\cdot)*u_0(x)+\int_{0}^{t}\int_{\mathbb R^n}S_1(t-\tau, x-y )f(\tau, y)\dd{y}\dd{\tau}\\
 	&:=u_0(t,x)+u_f(t,x),\nonumber
 \end{align}
 where $\widehat S_{1}(t,\xi)= e^{-i|\xi|^2 t}$, as long as the integral exist.
 \end{defn}

 \begin{lem} $\widehat S_{1}(\xi)$ is a multiplier from $C^{n+s}(\mathbb R^n)$ to $C^s(\mathbb R^n)$.\end{lem}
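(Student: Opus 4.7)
The plan is to decompose $\widehat{S}_1(\xi)=e^{-i|\xi|^2}$ (at $t=1$) into a low-frequency and a high-frequency piece and handle them by two different earlier results of this paper. Pick a smooth cutoff $\Phi$ equal to $1$ on a large ball and supported in a slightly larger ball, and write
\[
\widehat{S}_1(\xi) = \widehat{S}_1(\xi)\Phi(\xi) + \widehat{S}_1(\xi)(1-\Phi(\xi)).
\]
For the low-frequency piece, the symbol is compactly supported and $C^\infty$, so its inverse Fourier transform lies in $\mathcal S \subset B_{1,\infty}^0$. By Lemma \ref{multiplier-inside a ball} (or directly by Lemma \ref{Besov multiplier}, the Hölder Young inequality), this piece is a bounded multiplier from $C^{n+s}(\mathbb R^n)$ into $C^{n+s}(\mathbb R^n) \hookrightarrow C^{s}(\mathbb R^n)$.

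The real work is the high-frequency piece, which I will treat with Miyachi's Theorem \ref{multiplier-thm}, taking parameters $a = 2$ and $\theta = -n$. With these choices, the hypothesis of that theorem becomes
\[
|D^\gamma m(\xi)| \leq |\xi|^{-\theta - na/2}(A|\xi|^{a-1})^{|\gamma|} = (A|\xi|)^{|\gamma|}, \qquad |\gamma|\leq [n/2]+1,
\]
for $m(\xi) = \widehat{S}_1(\xi)(1-\Phi(\xi))$, with $\Phi$ chosen so that $1-\Phi$ vanishes on $|\xi|\leq 2$. To verify this, one computes $D^\gamma e^{-i|\xi|^2}$ either by induction or via Faà di Bruno's formula (Lemma \ref{faa di bruno}): each derivative produces $P_\gamma(\xi) e^{-i|\xi|^2}$ with $P_\gamma$ a polynomial of degree exactly $|\gamma|$, so $|D^\gamma e^{-i|\xi|^2}| \lesssim (1+|\xi|)^{|\gamma|}$. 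The Leibniz rule absorbs the cutoff derivatives because $D^\beta(1-\Phi)$ is supported in the annulus $1\leq|\xi|\leq 2$, hence the bound $|D^\gamma m(\xi)| \leq C|\xi|^{|\gamma|}$ holds uniformly on the support of $m$. Applying Miyachi's theorem then yields $m \in \mathcal M(C^{n+s}(\mathbb R^n), C^{s}(\mathbb R^n))$, and summing the two pieces gives the claim.

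I don't expect a serious obstacle; the proof is structurally identical to the derivation of the multiplier properties of $\widehat S_\alpha$, $\widehat Q_\alpha$, $\widehat P_\alpha$ carried out earlier. The only minor bookkeeping issue is matching the cutoff so that $m$ vanishes on $|\xi|\leq 2$ as Miyachi requires (rather than on $|\xi|\leq 1$), which is purely cosmetic; and making sure the constant $A$ in the derivative bound is independent of $|\gamma|$, which is immediate since $A=1$ works. One could also observe that sharpness follows from Theorem 4.4 of Miyachi \cite{Miyachi} as in the previous subsection, although the lemma statement only asserts the mapping property itself.
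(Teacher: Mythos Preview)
Your proposal is correct and follows essentially the same route as the paper: both split $\widehat S_1(\xi)=e^{-i|\xi|^2}$ into a low-frequency compactly supported piece (handled by Lemma~\ref{multiplier-inside a ball}) and a high-frequency piece (handled by Miyachi's multiplier results). The only cosmetic difference is that the paper invokes Theorem~4.4 of \cite{Miyachi} directly for the high-frequency part, since $(1-\Phi(\xi))e^{-i|\xi|^2}$ is exactly $m_{2,0}^{-}$, whereas you instead verify the derivative hypotheses of the general criterion Theorem~\ref{multiplier-thm} with $a=2$, $\theta=-n$; both yield the same conclusion. One small imprecision: the claim that $A=1$ works in the bound $|D^\gamma m(\xi)|\le (A|\xi|)^{|\gamma|}$ is not quite right, since the polynomials $P_\gamma$ carry $\gamma$-dependent constants (e.g.\ $\partial_{\xi_1}^2 e^{-i|\xi|^2}=(-2i-4\xi_1^2)e^{-i|\xi|^2}$); but as only finitely many $\gamma$ with $|\gamma|\le [n/2]+1$ are needed, one can absorb these into a single finite $A\ge 1$, and the argument goes through.
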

 \begin{proof} We split $\widehat S_{1}(\xi)$ into high frequency part and low frequency part. For the high frequency part, it is a multiplier from $C^{n+s}(\mathbb R^n)$ to $C^s(\mathbb R^n)$ by Theorem 4.4 in \cite{Miyachi}. And the low frequency part is  a multiplier from $C^{n+s}(\mathbb R^n)$ to $C^{s+n}(\mathbb R^n)$ by Lemma \ref{multiplier-inside a ball}. Thus we arrive at this statement.	
 \end{proof}
 	For the Schr\"odinger equation, using the similar estimates in Lemma \ref{Bessel potential estimate} for $J^\theta S_1(t,x)$ and $J^{\theta} \partial_t S_{1} (t,x)$, we can prove that when $\theta>\theta_{S_1}=n$, $J^{\theta} S_{1} (t,x) \in L^1(\mathbb R^n) $ uniformly in $0<t<1$ and when $\theta>n+2$, $J^{\theta} \partial_t S_{1} (t,x) \in L^1(\mathbb R^n) $ uniformly in $0<t< 1$. Therefore when $u_0(x)\in C^{n+2+\epsilon}(\mathbb R^n)$, $u(t,x)$ is continuous differentiable with respect to $t$ for every $x\in \mathbb R^n$ and it is the pointwise convergence to the initial data using the similar estimates for $J^\theta S_1(t,x)$. Thus we omit the details. In conclusion, Theorem \ref{main-result-2} is valid.

 Here we give another simpler proof for the pointwise convergence due to the high regularity for the initial data.
\begin{prop}If $u_0(x)\in C^{n+\epsilon}(\mathbb R^n)$ with $\epsilon>0$,  the mild solution \eqref{sms} pointwise convergent to the initial date.
\end{prop}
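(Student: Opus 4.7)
The plan is to exploit the fact that $C^{n+\epsilon}$ regularity of $u_0$ lets us write $u_0$ as the Bessel potential of a bounded continuous function, so the pointwise convergence reduces to a dominated-convergence argument.

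Set $g := J^{-(n+\epsilon)}u_0$, which by Proposition \ref{Bessel map} lies in $C^0(\mathbb R^n) \cap L^\infty(\mathbb R^n)$. Since $G^{n+\epsilon} \in L^1(\mathbb R^n)$ and $g \in L^\infty(\mathbb R^n)$, the identity $u_0 = G^{n+\epsilon} * g$ holds pointwise, and using the authors' assertion (above Proposition) that $J^{n+\epsilon} S_1(t,\cdot) \in L^1(\mathbb R^n)$ uniformly for $t \in (0,1)$, Fubini gives the pointwise representation $u_0(t,x) = \big(J^{n+\epsilon} S_1(t,\cdot)\big) * g(x)$. Hence
\[
u_0(t,x) - u_0(x) = \int_{\mathbb R^n}\!\big[J^{n+\epsilon}S_1(t,y) - G^{n+\epsilon}(y)\big]\, g(x-y)\,dy.
\]
Pointwise convergence of the kernel, $J^{n+\epsilon} S_1(t,y) \to G^{n+\epsilon}(y)$ as $t \to 0$ for every $y$, follows by dominated convergence on the Fourier side with the integrable dominant $(1+|\xi|^2)^{-(n+\epsilon)/2}$ (using $n+\epsilon > n$). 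Invoking the Schrödinger analog of Lemma \ref{Bessel potential estimate} to produce a uniform-in-$t$ pointwise majorant $M \in L^1(\mathbb R^n)$ with $|J^{n+\epsilon} S_1(t,y)| \leq M(y)$ for $0 < t < 1$, the full integrand is dominated by $\big(M(y) + G^{n+\epsilon}(y)\big)\,\|g\|_{L^\infty}$, and the dominated convergence theorem yields $u_0(t,x) \to u_0(x)$ for every $x \in \mathbb R^n$.

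For the forcing contribution $u_f(t,x) = \int_0^t \big(S_1(t-\tau,\cdot) * f(\tau,\cdot)\big)(x)\,d\tau$, the multiplier lemma for $\widehat{S_1}$ gives $\|S_1(t-\tau,\cdot) * f(\tau,\cdot)\|_{L^\infty(\mathbb R^n)} \lesssim \|f(\tau,\cdot)\|_{C^{n+\epsilon}(\mathbb R^n)}$, which under the hypothesis $f \in C_t^0((0,\infty); C^{n+\epsilon})$ is locally bounded in $\tau$; consequently $|u_f(t,x)| \lesssim t \cdot \sup_{0 < \tau < t}\|f(\tau,\cdot)\|_{C^{n+\epsilon}} \to 0$ as $t \to 0$. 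Combining both pieces gives $u(t,x) \to u_0(x)$ pointwise.

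The main obstacle is securing the Schrödinger analog of Lemma \ref{Bessel potential estimate}: namely a uniform (in $t \in (0,1)$) pointwise $L^1$ dominant for $J^{n+\epsilon} S_1(t,\cdot)$. I expect this to follow by a frequency decomposition and integration by parts in the oscillatory Fourier integral, treating separately the regimes $|\xi| \lesssim t^{-1/2}$ and $|\xi| \gtrsim t^{-1/2}$, then scaling $y = t^{1/2} z$ to reduce to estimates structurally parallel to those of Lemma \ref{Bessel potential estimate}.
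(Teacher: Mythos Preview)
Your proposal is correct and follows essentially the same route as the paper: write $u_0 = G^{n+\epsilon}*g$ with $g=J^{-(n+\epsilon)}u_0\in L^\infty$, establish $J^{n+\epsilon}S_1(t,y)\to G^{n+\epsilon}(y)$ by dominated convergence on the Fourier side, and then pass the limit through the spatial convolution using a uniform $L^1$ majorant for $J^{n+\epsilon}S_1(t,\cdot)$ on $0<t<1$. One small remark: for the $u_f$ term you invoke ``the multiplier lemma for $\widehat{S_1}$'', but as stated that lemma is at $t=1$ and the implied constant could in principle depend on $t$; the cleaner justification (and the one the paper tacitly uses) is the same $L^1*L^\infty$ bound you already set up, namely $\|S_1(t-\tau,\cdot)*f(\tau,\cdot)\|_{L^\infty}\le \|J^{n+\epsilon}S_1(t-\tau,\cdot)\|_{L^1}\,\|J^{-(n+\epsilon)}f(\tau,\cdot)\|_{L^\infty}$, with the first factor uniformly bounded for $t-\tau\in(0,1)$.
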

\begin{proof}
	By the dominated convergence theorem, the following equality is valid,
\[\lim_{t\to 0}\int_{\mathbb R^n}e^{i x \cdot \xi }e^{-i|\xi|^2t}\frac{\dd{\xi}}{(1+|\xi|^2)^{\frac{n+\epsilon}{2}}}=\int_{\mathbb R^n}e^{i x \cdot \xi }\frac{\dd{\xi}}{(1+|\xi|^2)^{\frac{n+\epsilon}{2}}}=G^{n+\epsilon}(x),
 \]
 therefore we have
 \begin{align*}
	\lim_{t\to 0}u_0(t,x)&=\lim_{t\to 0}S_1(t,\cdot)*u_0(x)\\
 	&=\lim_{t\to 0}J^{n+\epsilon}S_1(t,\cdot)*J^{-(n+\epsilon)}u_0(x)\\
 	&=G^{n+\epsilon}*J^{-(n+\epsilon)}u_0(x)=u_0(x).
 \end{align*}
  where we use the fact that $J^{n+\epsilon}S_1(t,x)$ is controlled by a integrable function pointwise uniformly for $0<t<1$ to verify the exchange of the integral and the limit.
 Similarly, we have $\lim\limits_{t\to 0}u_f(t,x)=0$. In this way, we proved the pointwise convergence.

 \end{proof}

 \noindent\textbf{Acknowledgements}.
Jiqiang Zheng was partially supported by NSFC under grant 11831004 and 11771041.

\end{document}